\definecolor{darkgreen}{rgb}{0,0.5,0} %
\renewcommand{\epsilon}{\varepsilon}
\renewcommand{\Re}{\operatorname{Re}}
\renewcommand{\Im}{\operatorname{Im}}
\DeclareMathOperator{\Gal}{Gal}
\DeclareMathOperator{\PGL}{PGL}
\DeclareMathOperator{\GL}{GL}
\DeclareMathOperator{\Vol}{Vol}
\newcommand{\abs}[1]{\left| #1 \right|}
\newcommand{\generated}[1]{\left\langle #1 \right\rangle}
\newcommand{\generatedQ}[1]{\left\langle #1 \right\rangle_\mathbb{Q}}
\newcommand{\A}{\mathbb{A}}
\newcommand{\N}{\mathbb{N}}
\newcommand{\Q}{\mathbb{Q}}
\newcommand{\C}{\mathbb{C}}
\newcommand{\R}{\mathbb{R}}
\newcommand{\E}{\mathcal{E}}
\newcommand{\K}{\mathcal{K}}
\newcommand{\Z}{\mathbb{Z}}
\renewcommand{\P}{\mathbb{P}}
\newcommand{\Gemme}{\mathbb{G}_\mathrm{m}}
\newcommand{\radu}{U}
\newcommand{\Ci}{\mathcal{C}}
\newcommand*\circled[1]{\tikz[baseline=(char.base)]{
            \node[shape=circle,draw,inner sep=2pt] (char) {#1};}}
\numberwithin{equation}{section} %
\newtheorem{thm}{Theorem}[section]
\newtheorem{prop}[thm]{Proposition}
\newtheorem{lemma}[thm]{Lemma}
\theoremstyle{definition}
\newtheorem{example}[thm]{Example}
\theoremstyle{remark}
\newtheorem{rem}[thm]{Remark}
\title{Classification of rational angles in plane lattices}
\author{Roberto Dvornicich}
\address{Department of mathematics, University of Pisa, Largo Bruno Pon\-te\-cor\-vo~5, 56127 Pisa, Italy}
\email{roberto.dvornicich@unipi.it}
\author{Francesco Veneziano}
\address{Department of mathematics, University of Genova, Via Dodecaneso~35, 16146 Ge\-no\-va, Italy}
\email{veneziano@dima.unige.it}
\author{Umberto Zannier}
\address{Scuola Normale Superiore, Piazza dei Cavalieri 7, 56126 Pisa, Italy}
\email{umberto.zannier@sns.it}
\subjclass[2010]{11H06, 14G05, 11D61, 51M05}
\keywords{plane lattices, trigonometric diophantine equations, rational points on curves}
\begin{document}

\begin{abstract}
This paper is concerned with configurations of points in a plane lattice  which determine angles that are rational multiples of $\pi$. We shall study how many such angles may appear in a given lattice and in which positions, allowing the lattice to vary arbitrarily.
 
This classification turns out to be much less simple than could be expected, leading even to parametrizations involving rational points on certain algebraic curves of positive genus. 
\end{abstract}

\maketitle

\section{Introduction}

The present paper will be concerned with lattices in $\R^2$ and in fact with the angles $\widehat{ABC}$  determined by an ordered triple of distinct  points $A,B,C$ varying through the lattice.  Our leading issue will involve angles which are rational multiples of $\pi$, which we will call  {\it rational angles} for brevity. These angles of course  appear in regular polygons, in tessellations of the plane and other similar issues, and it seems to us interesting  to study in which  lattices these angles appear and how. 

\medskip

Suppose $P,Q$ are points in the lattice $\Z^2$ and let $\widehat{POQ}$ be the angle formed between the rays $OP$ and $OQ$ (where $O$ denotes the origin); one might wonder when this angle is a rational multiple of $\pi$.  It turns out that, if $\theta$ is a rational multiple of $\pi$ then $\theta$ is one of $\pm\frac{\pi}{4},\pm\frac{\pi}{2},\pm\frac{3\pi}{4},\pm\pi$, as shown by J.~S.~Calcut in \cite{Calcut09} (see the Appendix to this Introduction for a proof simpler than Calcut's).  Analogous properties hold for the {\it Eisenstein lattice} generated by the vertices of an equilateral triangle in the plane: the rational angles which occur are precisely the integer multiples of $\pi/6$.  

\medskip

Let us see a few historical precedents of similar problems.

Considering  the simplest lattice $\Z^2$, E. Lucas   \cite{Lucas-Quinconces}  in 1878 answered in the negative the rather natural question  of  whether  points $A,B,C\in\Z^2$ can determine an equilateral triangle.  In fact, it is not very difficult to show that no angle $\widehat{ABC}$, with $A,B,C\in\Z^2$, can be equal to $\pi/3$.  (This will follow as an extremely special case of our analysis, but we anticipate it in the Appendix to the Introduction  with a short and simple argument.)

In 1946, in a related direction,  W. Scherrer
\cite{Scherrer-Gitter}   
considered   all regular polygons with all vertices  in a given arbitrary lattice, and proved that  the polygons which may occur are precisely  those with $3,4$ or $6$ sides. (See again the Appendix for an account of  Scherrer's nice proof with some comments.)

\medskip

These results prompt the question: {\it  what happens when we consider other plane\footnote{Of course one could consider analogous problems in higher dimensions, however we expect the complexity  will increase greatly. The results in \cite{kedlaya2020space} appear to be not unrelated.
} lattices?}

 Namely, for  an arbitrary  lattice $\Lambda\subset \R^2$, how can we describe all rational angles determined by three points in $\Lambda$?  The most ambitious goal would be to obtain  a complete classification, in some sense. In particular, we can list the following issues in this direction:
 
 \medskip 
 
{\bf 1.} Which rational angles can occur in a given plane lattice $\Lambda$?  Intuitively, we should not expect many such angles when the lattice is fixed. 

{\bf 2.}  In which positions can they occur? Namely, which triples of points $A,B,C\in\Lambda$ can determine such a given angle? Note that  it is clearly sufficient to consider the case when $B$ is  the origin, and we may replace $A,C$ by any two points in the lattice on the lines $OA,OC$. So we shall consider an angle determined by the origin and two lines passing through the origin and another lattice point.

{\bf 3.} Adopting a reciprocal  point of view,  how can we classify   plane lattices according to the `structure' of all rational  angles determined  by  their points?  
 
 \medskip

We shall give below a more precise meaning to these questions; for instance, we shall study plane lattices according to {\it how many} rational angles (with vertex in the origin)  may occur and in which geometric `configurations'; for instance it is relevant  which pairs of angles have a side in common.

It is clear that for any arbitrarily-prescribed angle we are able to find a plane lattice in which this angle appears as the angle determined by three points of the lattice. It is easily seen that even a second (rational) angle can be prescribed  arbitrarily. On adding further conditions on the rationality of other angles, and their relative positions, the arithmetical information deduced from these conditions increases and imposes severe restrictions on the lattice, which can lead to a classification. Let us observe that the variables in our problem are (i) the lattices, (ii) the (rational) angles, and (iii) the lines (through lattice points) determining these angles. We shall see that for three or more rational angles in a lattice we have roughly the following possibilities:
  
  \medskip
  
  Either (A) the angles belongs to a certain finite set which is described in Section~\ref{sec:minkowski}, or (B) the lattice belong to one of finitely many families of lattices (and corresponding angles) which are described in Section~\ref{sec:famiglie.descritte}.
  
  More precisely, with a notation that will be properly introduced later, we will prove in Sections~\ref{sec:minkowski} and \ref{sec:famiglie.parametriche} the following theorem:
\begin{thm}\label{thm:MAIN}
 Let $V$ be a space of one of the three types \circled{2}+\circled{2}+\circled{2}, \circled{3}+\circled{2}, or \circled{4}. Then either $V$ is homotetic to one of the spaces described in Section~\ref{sec:famiglie.descritte}, or any rational angle $\frac{a}{b}2\pi$ of $V$, with $a,b$ coprime, satisfies
 \[
  b\leq 2^7 \cdot 3^4 \cdot 5^3 \cdot\left(\prod _{7\leq p\leq 37}p\right)^2.
 \]
\end{thm}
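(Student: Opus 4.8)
The plan is to convert each admissible configuration into a system of multiplicative relations among roots of unity, solve that system, and read off the two alternatives directly. Identifying $\R^2$ with $\C$ and performing a homothety, write the lattice as $\Lambda=\Z+\Z\tau$ with $\tau$ in the upper half‑plane. A line through the origin and the lattice point $m+n\tau$ is recorded by its slope $s=m/n\in\P^1(\Q)$ together with the unimodular number
$t(s)=\frac{m+n\tau}{m+n\bar\tau}=e^{2i\alpha_s}$,
where $\alpha_s$ is the direction angle of the line. The assignment $s\mapsto t(s)$ is the M\"obius transformation determined by $\tau$, hence preserves cross‑ratios; and the angle between two of the lines is a rational multiple of $\pi$ exactly when $t(s_j)/t(s_k)$ is a root of unity. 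So a space $V$ of one of the listed types is, concretely, a value of $\tau$ together with finitely many rational slopes $s_1,\dots$ for which prescribed ratios $t(s_j)/t(s_k)=\zeta_{jk}$ are roots of unity.

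\textbf{Eliminating $\tau$.}
Fix the combinatorial type of $V$. Writing $t(s_j)=\zeta_j\,t(s_1)$ with the $\zeta_j$ roots of unity ($\zeta_1=1$), each equation $t(s_j)=\zeta_j t(s_1)$ is, for fixed $\zeta_j$ and $s_j$, a real‑algebraic equation in $\tau$. The point is to \emph{eliminate} $\tau$ among these. When four lines with enough mutual rational angles are available (this is built into \circled{4}, and into \circled{3}+\circled{2} and \circled{2}+\circled{2}+\circled{2} once the clusters are combined) the cleanest elimination is via cross‑ratios: by M\"obius‑ and scaling‑invariance,
$(s_1,s_2;s_3,s_4)=(1,\zeta_2;\zeta_3,\zeta_4)$,
and the left side is a \emph{rational} number, so the algebraic number $(1,\zeta_2;\zeta_3,\zeta_4)$, a priori in a cyclotomic field $\Q(\zeta_N)$, is forced to be rational. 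Applying the Galois group of $\Q(\zeta_N)/\Q$ to this identity clears the unknown rational and yields a packet of genuine cyclotomic identities — sums $\sum_i q_i\,\xi_i=0$ with $q_i\in\Q$ and $\xi_i$ monomials in the $\zeta_j$ — one for each type, to be combined with the residual M\"obius equations that recover $\tau$ once the $\zeta_j$ are pinned down.

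\textbf{The dichotomy.}
I would then feed the cyclotomic packet to the classical machinery on vanishing sums of roots of unity — Mann's theorem and the Conway–Jones refinement, in effective form — to enumerate the solutions. This splits them into: (i) \emph{degenerate} branches, in which the relations among the $\zeta_j$ hold identically along a subtorus coset and leave a one‑parameter freedom that, reinjected into the M\"obius equations $t(s_j)=\zeta_j t(s_1)$, sweeps out exactly the lattices and angles of Section~\ref{sec:famiglie.descritte} — this is precisely where the parametrizations by rational points on curves of positive genus appear; and (ii) \emph{rigid} branches, in which Mann/Conway–Jones bound the order of every root of unity in play: a $k$‑term minimal relation confines the orders to divisors of $\prod_{p\le k}p$, while the Conway–Jones analysis of how high a power of $2,3,5$ a minimal relation can carry produces the exponents, giving the stated $b\le 2^7\cdot 3^4\cdot 5^3\cdot(\prod_{7\le p\le 37}p)^2$. (Where it is more efficient one may instead, in branch (ii), confine $\Lambda$ to a finite list by a geometry‑of‑numbers argument in the spirit of Minkowski applied to the sublattice spanned by the relevant lattice points, as in Section~\ref{sec:minkowski}.) Since every rational angle $\frac ab2\pi$ of $V$ is a difference of arguments already recorded by some $\zeta_j$, its denominator $b$ obeys the same bound.

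\textbf{Main obstacle.}
The skeleton is short, but the real work — and the main obstacle — is the \emph{complete and sharp} execution of the last two steps across all three configuration types. There are many sub‑cases according to which slopes coincide, which differences of arguments vanish, and which cyclotomic sub‑relation is minimal; for every degenerate branch one must verify that it genuinely coincides with one of the finitely many families of Section~\ref{sec:famiglie.descritte} and hides no further infinite family (possibly supported on yet another positive‑genus curve), while for every rigid branch one must track the exact primes and their multiplicities so as not to weaken the explicit constant. This case analysis and bookkeeping, rather than any single deep theorem, is where the difficulty concentrates.
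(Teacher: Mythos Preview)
Your skeleton correctly identifies the two endpoints---eliminate $\tau$ to obtain a vanishing sum of roots of unity with rational coefficients, then split into ``parametric'' versus ``bounded order''---but the mechanism you propose for the second step does not produce the stated bound, and this is a genuine gap rather than mere bookkeeping.

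The Conway--Jones theorem (the paper's Theorem~\ref{thm:unit.equation}) says that a $k$-term relation with no vanishing subsum forces the common order to be \emph{squarefree}, with $\sum_{p\mid Q}(p-2)\le k-2$. It gives no control whatsoever on higher prime powers; the exponents $2^7,\ 3^4,\ 5^3$ and the \emph{squares} of the primes $7\le p\le 37$ simply cannot be extracted from Mann or Conway--Jones applied to the eliminated equation, no matter how the subsum recursion is organized. In the paper these exponents arise from a completely different argument: for each prime power $p^m$ exactly dividing the common order $N$, one writes the $27$-term equation (case \circled{2}+\circled{2}+\circled{2}) as a polynomial in a primitive $p^m$-th root $\zeta$, groups terms by residue class of the exponent vector $(v,e)\pmod{p^{m-1}}$, and applies geometry of numbers---specifically the exact value of Hermite's constant in dimension three---to the auxiliary lattice $\Gamma=v\Z+p^{m-1}\Z^3\subset\Z^3$. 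A short vector $w$ in $\Gamma$ either forces $p^{m-1}\le 27\sqrt{2}<39$ (resp.\ $2^{m-1}<59$), or shows that $(xt^{w_1},yt^{w_2},zt^{w_3})$ is a solution identically in $t$, i.e.\ the solution lies on a translate of a one-parameter subtorus. This is what produces both the prime cutoff at $37$ and the higher exponents on $2,3,5$; your parenthetical remark about Minkowski is in fact the heart of the proof, and it is applied to this auxiliary three-dimensional exponent lattice, not to ``the sublattice spanned by the relevant lattice points'' in the plane.

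A secondary issue: your cross-ratio elimination is clean for type \circled{4}, where four mutually rational lines are available, but in types \circled{3}+\circled{2} and \circled{2}+\circled{2}+\circled{2} the clusters are disjoint and no angle between them is assumed rational, so there is no single quadruple of $t(s_j)$ with all pairwise ratios roots of unity. The paper instead expresses $\tau$ linearly from a triple (equation~\eqref{eq:tupla:tre}) and quadratically from a pair (equation~\eqref{eq:nangle:due}), then takes a resultant; this is what yields the explicit $6$-, $14$-, and $27$-term polynomials in $x,y,z$ whose monomial support sits in $\{-1,0,1\}^3$ and makes the lattice argument above go through. Conway--Jones does enter the paper, but only afterwards in Section~\ref{sec:famiglie.parametriche}, to classify the finitely many subtorus translates that the lattice argument has isolated.
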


 Furthermore, when the angles lie in the mentioned finite set and are fixed, then  
  
  Either A1: The lattice belongs to a certain well-described family (called CM in the paper); moreover,  for a fixed lattice in this family the points determining the angles are parameterized by the rational points in finitely many rational curves (see Section~\ref{section:equation:1angle} and equation~\eqref{eqn:1angle:seconda});
  
  or A2: Both the lattice and the vertices are parametrized by the rational points on finitely many suitable algebraic curves;
  
  or A3: We have a finite set of lattices-vertices.
  
  \medskip

We shall analyse all of these situations, also looking at the maximal number of different configurations of rational angles which can exist in a lattice, in the sense of 2. above. We shall prove that indeed this number is finite apart from well-described families. 

\medskip

As it is natural to expect, this study involves algebraic relations among roots of unity, a topic which falls into a well-established theory. However, even with these tools at disposal, our problem turns out to be  more delicate than can be expected at first sight,  in that some surprising phenomena  will appear on the way to a complete picture: for instance we shall see that some of the curves mentioned in the case A2 have positive genus, in fact we have examples of genus up to $5$. 

In this paper we shall give an `almost' complete classification, in the above alluded sense;  in particular, this shall be complete regarding the sets of angles which may appear. 
Concerning  the classification of   specific  configurations of  angles, we shall  confine them to a certain explicit finite  list. In the present paper we shall treat in full detail only a part of them, in particular when the rational angles considered share a side.

We also give a full discussion of a case in which configurations arise which correspond to rational points on a certain elliptic curve (of positive rank). (See \S~\ref{sec:esempio.genere.1})

We  postpone  to a second paper the discussion of the remaining few cases, which need not be treated differently, but are computationally rather complicated due to the combinatorics of the configurations. We note that in view of Faltings' theorem, there are only finitely many  rational points on the curves of genus $>1$ which appear; however no known method is available to calculate these points (only to estimate their number), so we shall not be able \textit{a priori} to be fully explicit in these cases.

\subsection*{A concrete example} As an illustration of the kind of problems one is faced with, we show here  an example of a curve of high genus which arises from the general treatment of lattices with three non-adjacent rational angles.

Fixing the amplitudes of the angles to be $\frac{3}{5}\pi,\frac{3}{10}\pi,-\frac{1}{10}\pi$ we are led to study the rational solutions to the system $f_1=f_2=0$, where
\begin{align*}
  f_1&=-a^2 b^2 - a^2 b c + a b^2 c - 2 a b c^2 + 2 a b^2 d - a^2 c d +{}\\
  &+10 a b c d - 5 b^2 c d - a c^2 d + b c^2 d - 4 a b d^2 + 2 b c d^2 - 
 c^2 d^2,\\
 f_2&=-a^2 b^2 - 2 a^2 b c + a b^2 c - 5 a b c^2 + 2 a^2 b d - a^2 c d +{}\\
 &+ 16 a b c d - 2 b^2 c d - 2 a c^2 d + b c^2 d - 8 a b d^2 + 
 2 a c d^2 - c^2 d^2.
\end{align*}
These equations define a variety in $\P_3$ which consists of the four lines $ab=cd=0$ and of an irreducible curve $\Ci$ of genus 5.

It is worth noting that the curve $\Ci$ contains some trivial rational points such as, for instance $(1:1:1:1)$, but also nontrivial rational points such as $(12:2:-8:-3)$, which corresponds to the lattice generated by 1 and $\tau=r\theta$, with
\begin{align*}
 r&=\frac{9}{2}+\frac{\sqrt{5}}{2}+\frac{1}{4}\sqrt{30+22\sqrt{5}}-\frac{1}{4}\sqrt{150+110\sqrt{5}}, & \theta&=e^{\frac{3}{5}\pi i}.
\end{align*}
In addition to the angle spanned by 1 and $\tau$, there are two more rational angle to be found between lines through the origin passing through elements of the lattice. The angle spanned by $\tau+12$ and $\tau+2$ has an amplitude of $\frac{3}{10}\pi$, and the one spanned by $\tau-3$ and $\tau-8$ has an amplitude of $\frac{1}{10}\pi$. Figure~\ref{fig:sporadico} illustrates these angles.

\begin{figure}
\begin{center}
\begin{tikzpicture}[scale=0.5]
\coordinate (O) at (0,0);
\node [anchor=90] at (O) {$0$};
\node [anchor=90] at (1,0) {$1$};
\node [anchor=270] at ({2.86807*cos(108)},{2.86807*sin(108)}) {$\tau$};
\node [anchor=270] at ({2.86807*cos(108)+2},{2.86807*sin(108)}) {$\tau+2$};
\node [anchor=270] at ({2.86807*cos(108)+12},{2.86807*sin(108)}) {$\tau+12$};
\node [anchor=270] at ({2.86807*cos(108)-8},{2.86807*sin(108)}) {$\tau-8$};
\node [anchor=270] at ({2.86807*cos(108)-3},{2.86807*sin(108)}) {$\tau-3$};

\foreach \y in {-10,-9,...,12} 
\node[circle, fill=black, inner sep=1pt, minimum size=1pt] at ({\y},{0}) {};

\foreach \y in {-9,-8,...,13} 
\node[circle, fill=black, inner sep=1pt, minimum size=1pt] at ({2.86807*cos(108)+\y},{2.86807*sin(108)}) {};

\foreach \y in {-8,-7,...,13} 
\node[circle, fill=black, inner sep=1pt, minimum size=1pt] at ({2*2.86807*cos(108)+\y},{2*2.86807*sin(108)}) {};

\foreach \y in {-7,-6,...,14} 
\node[circle, fill=black, inner sep=1pt, minimum size=1pt] at ({3*2.86807*cos(108)+\y},{3*2.86807*sin(108)}) {};

\draw [->,thick,red] (O)--(1,0);
\draw [->,thick,red] (O)--({2.86807*cos(108)},{2.86807*sin(108)});
\draw [->,thick,blue] (O)--({2.86807*cos(108)+2},{2.86807*sin(108)});
\draw [->,thick,blue] (O)--({2.86807*cos(108)+12},{2.86807*sin(108)});
\draw [->,thick,green] (O)--({2.86807*cos(108)-3},{2.86807*sin(108)});
\draw [->,thick,green] (O)--({2.86807*cos(108)-8},{2.86807*sin(108)});
\node[circle, fill=black, inner sep=1pt, minimum size=3pt] at ({0},{0}) {};
\end{tikzpicture}
\end{center}
\caption{}\label{fig:sporadico}
\end{figure}

 As will be clear soon, these problems are  better analysed not in plane lattices but viewing the real plane $\R^2$ as the complex field $\C$ and  considering, in place of the lattices,  the $\Q$-vector subspaces of $\C$ generated by the lattices.

\bigskip

\subsection*{Organization of the paper} The paper will be organized roughly as follows.

\medskip

- In \S~\ref{sec:definizioni}   we shall introduce in detail our issues, giving also some notation and terminology.    

\medskip

-   Section~\ref{sec:equazioni}   will be subdivided in several parts.   

In the first two parts we shall find general equations corresponding to the configurations that we want to study (as described in the previous  section).

The third part will be devoted to obtaining a linear relation in roots of unity, with rational coefficients, after elimination from the equations obtained formerly (depending on the configuration).

In the fourth part we will study more in depth the elimination carried out in the previous part, in order to prove some geometric results.

The fifth  part will recall known results from the  theory of linear relations in  roots of unity, which shall be used to treat the mentioned equations.

\medskip

- In \S~\ref{sec:simmetrie}  we shall study spaces with special symmetries (for instance those which correspond to imaginary quadratic fields).

\medskip

- In \S~\ref{sec:minkowski}  we shall prove the bound appearing in Theorem~\ref{thm:MAIN}; for this we shall use among other things results of the geometry of numbers (not applied to the original  lattice however, but to a certain region  in dimension three). 

\medskip

- In \S~\ref{sec:famiglie.parametriche} we classify the finite number of continuous families of lattices which escape the previous theorem. 

\medskip

- In \S~\ref{sec:4} we study the configurations  where there exist four non-proportional points  $P_1,P_2,P_3,P_4$  of the lattice   such that  every angle $\widehat{P_iOP_j}$ is rational.  In this analysis we shall meet two rather surprising geometric  shapes (which we shall call {\it dodecagonal}). 

\medskip

- Section \ref{sec:esempio.genere.1}  will contain the complete study of an elliptic curve such that its rational points correspond to lattices with three non adjacent rational angles. (The group of rational points will be found to be isomorphic to $\Z/(2)\times \Z$.)

\bigskip

\centerline{\bf Appendix to the Introduction}

\medskip

In this short Appendix we give a couple of simple proofs related to the known results cited in the Introduction. These will be largely  superseded by the rest of the paper, but due to their simplicity we have decided to offer independent short arguments for them.

\subsection{Rational angles in the Gaussian lattice.} By `Gaussian lattice' we mean as usual the lattice $\Lambda=\Z+\Z i\subset \C$. For our issue of angles, it is equivalent to consider the $\Q$-vector space generated by the lattice, i.e. $V=\Q +\Q i$. This space has the special feature of being  a field, i.e. the Gaussian field $\Q(i)$, which makes our problem quite simpler. In fact, let $\alpha=2\pi a/b$ be a rational angle occurring in $\Lambda$ or $V$, where $a,b$ are coprime nonzero integers, and let  $\zeta=\exp(2\pi ia/b)$. That $\alpha$ occurs as an angle in $\Lambda$  means that  there are  nonzero points $P,Q\in\Lambda\subset \C$ such that $Q=\zeta rP$ where $r\in\R^*$. Conjugating and dividing we obtain $\zeta^2=x/\bar x$, where $x= Q\bar P\in \Q(i)$. So the root of unity $\zeta^2$ lies in  $\Q(i)$ and it is well known that $\Q(i)$ contains only the fourth roots of unity, so $\zeta$ has order dividing $8$.  A direct argument is to observe that $\zeta^2$ is an algebraic integer so must be of the shape $r+is$ with integers $r,s$, and being a root of unity this forces $r^2+s^2=1$, so  $\zeta^2$ is a power of $i$ and  $\zeta^8=1$, as required. 

In the converse direction,  observe that indeed $\exp(\pi i/4)$ is determined by the three points $(1,1), (0,0), (1,0)$ of $\Lambda$,   (i.e. the complex numbers  $1+i, 0, 1 \in\Q(i)$).

A similar argument holds for the Eisenstein lattice, which again   generates over $\Q$ a field, namely the   field generated by the roots of unity of order $6$; hence $\zeta$ can be a $12$-th root of unity. 

These special lattices will be treated in greater generality in \S~\ref{section:CM}.

\medskip

\subsection{On Sherrer's proof.}  As mentioned above,   Sherrer proved that the only regular polygons with vertices in some lattice are the $n$-gons for $n=3,4,6$.  The  idea of his  proof is by descent: if $P_1,\ldots ,P_n$ are the vertices of an $n$-gon in a given lattice $\Lambda$, then $P_{j+1}-P_j$ are again the vertices of a regular $n$-gon in $\Lambda$; however,   if $n>6$, the new sides are  smaller whence iterating the procedure we obtain a contradiction  (and similarly with a little variation for $n=5$).  In fact, this argument amounts  to the following: we  assume  as before  that the lattice is inside $\C$ and (after an affine  map) that $\Q\Lambda$  contains all  the $n$-th roots of unity $\zeta^m$ where $\zeta=\exp(2\pi i/n)$.  By appealing to the fact, already known to Gauss, that the degree $[\Q(\zeta):\Q]$ is equal to $\phi(n)$, we may already conclude that $\phi(n)\le 2$, i.e. $n=1,2,3,4,6$. But we may also avoid using such result, on observing that for $n>6$, the ring $\Z[\zeta]$ contains nonzero elements of arbitrarily small complex absolute value, so $\Z[\zeta]$ cannot be contained in a lattice, which is discrete. To justify the assertion, consider the elements $(\zeta -1)^m$: if $n>6$ they are have absolute value which is decreasing to $0$ since $0<|\zeta-1|<1$. For $n=5$ we may instead consider $1+\zeta^2$ in place of $\zeta-1$.

\section*{Acknowledgements}
We thank Yves André, Julian Demeio and Davide Lombardo for useful comments and discussions.

We thank the anonymous referee for pointing out to us the references in Remark~\ref{rem:applicazione.euclidea}.

\section{Terminology and notation}\label{sec:definizioni}
We identify the euclidean plane with $\C$. We call a \textit{rational angle} in $\C$ an ordered couple of distinct lines through the origin such that the measure of the euclidean angle between them is a rational multiple of $\pi$. We say that two points $v_1,v_2\in \C\setminus\{0\}$ such that $v_2/v_1\not\in\R$ determine (or form) a rational angle if the lines $(\R v_1, \R v_2)$ do (i.e. if the argument $v_2/v_1$ is a rational multiple of $\pi$); with a slight abuse of notation we write the angle $(\R v_1, \R v_2)$ as $(v_1,v_2)$.

Let $\Lambda\subseteq\C$ be a lattice. Given a rational angle determined by elements of $\Lambda$, many more pairs in $\Lambda^2$ can be trivially found (by multiplication by integers) that determine the same angle. Therefore we prefer to tensor the whole lattice by $\Q$ and study angles in the tensored space. With this point of view, we can say that when we draw rational angles in $\Lambda$ we extend the sides indefinitely and we are not concerned with which points of $\Lambda$ actually meet the sides. 

In this setting, the objects that we will study are 2-dimensional $\Q$-vector spaces $V\subset\C$ that contain two $\R$-linearly independent vectors. These are precisely the sets obtained after tensoring a plane lattice by $\Q$. From now on, unless otherwise stated, we will refer to these sets simply as \textit{spaces}.

Any angle-preserving transformation of $\C$ that sends the origin to itself will clearly establish a bijection between the rational angles of a space $V$ and the rational angles of its image.

These transformations are generated by complex \textit{homotheties} of the form $z\mapsto \lambda z$ for a fixed $\lambda\in\C^*$ and by the complex conjugation.
For this reason we will say that two spaces $V_1,V_2$ are \textit{homothetic} (and we write $V_1\sim_h V_2$) if they are sent one to the other by a homothety, and we will say that they are \textit{equivalent} (and we write $V_1\sim V_2$) if $V_1$ is homothetic to $V_2$ or to its complex conjugate $\overline{V_2}$.

\begin{rem}
 Clearly every space is homothetic to a space containing 1. Given two spaces $V_1=\generatedQ{1,\tau_1}$ and $V_2=\generatedQ{1,\tau_2}$, it is easy to see that $V_1\sim_h V_2$ if and only if $\tau_2=\frac{a\tau_1+b}{c\tau_1+d}$, where $\left(\begin{smallmatrix}a & b\\ c & d\end{smallmatrix}\right)\in \PGL_2(\Q)$, in which case the homothetic coefficient $\lambda$ is given by $1/(c\tau_1+d)$.
 
 The same condition can also be expressed by saying that $V_1\sim_h V_2$ if and only if there is a $\Q$-linear dependence between $1,\tau_1,\tau_2, \tau_1\tau_2$; in fact, from $\tau_2=\frac{a\tau_1+b}{c\tau_1+d}$ a linear relation is immediately obtained, and vice versa, from such a linear relation we obtain a matrix which must be invertible because $\tau_2\not\in\Q$. 
 
 In conclusion, we can always assume up to homothety that $V=\generatedQ{1,\tau}$ with $\Im(\tau)>0$. Up to equivalence, we can additionally assume that $\Re(\tau)\geq 0$.
\end{rem}

For ease of notation, we will often write $V(\tau):=\generatedQ{1,\tau_1}$.

In every space $V$, given a rational angle $(v_1,v_2)$, we can obtain other rational angles by swapping them. We call the angles thus obtained $\textit{equivalent}$ to and we are not concerned with them.

Given two adjacent rational angles $(v_1,v_2)$ and $(v_2,v_3)$, we see immediately that $(v_1,v_3)$ is also a rational angle. For this reason it is more convenient to consider sets of adjacent angles as a single geometrical configuration, rather than as independent angles; this point of view is also supported by the shape of the equations that describe these cases. Therefore we call a \textit{rational $n$-tuple} a set of $n$ vectors $\{v_1,\dotsc,v_n\}$ such $(v_i,v_j)$ is a rational angle for all $i\neq j$. 

According to this definition, a rational $n$-tuple can be identified with an $n$-element subset of $\P(V)\subseteq \P(\C)$; this point of view however is not particularly useful when writing up the equations that describe the configuration.

\begin{rem}
 If a rational $n$-tuple and a rational $m$-tuple are not disjoint, then their union is still a rational $k$-tuple for some $k\geq n,m$.
\end{rem}

From the shape of equations \eqref{eq:tupla:tre} and \eqref{eq:nangle:due} below, which describe rational $n$-tuples, it is clear that angles which belong to a rational $n$-tuple containing 1 are qualitatively different from angles which do not belong to such an $n$-tuple. In fact, an angle in a rational $n$-tuple containing 1 leads to an equation of degree 1 in $\tau$, while angles in rational $n$-tuples not containing 1 lead to equations of degree 2 in $\tau$.

In light of these considerations, we will say that a space $V$ is of type \circled{$n$} if it contains a rational $n$-tuple; we extend this notation ``additively'', by saying that $V$ is of type \circled{$n$}+\circled{$m$} if 
it contains a rational $n$-tuple and a disjoint rational $m$-tuple, and so on. There is an obvious partial order on the possible types, and we say that $V$ has an exact type
$a_1$\circled{$n_1$}$+\dotsb+a_k$\circled{$n_k$}, if this type is maximal for $V$. We will characterize in Section~\ref{section:CM} the spaces for which such a maximal type exists.\\
We remark that spaces of type \circled{3} correspond to triangles in which all angles are rational multiples of $\pi$.

We will denote by $\radu$ the set of all roots of unity.

\section{Equations}\label{sec:equazioni}

\subsection{The equation of a rational angle}\label{section:equation:1angle}
Let $\tau\in\C\setminus \R$, and let $V=\generatedQ{1,\tau}$.
Let $a_0,a_1,b_0,b_1\in\Q$ be such that $(a_0\tau+a_1,b_0\tau+b_1)$ is a rational angle. That is to say, there exists a root of unity $\mu\in U\setminus\{\pm 1\}$ such that the ratio $\frac{b_0\tau+b_1}{a_0\tau+a_1}\mu$ is real. Setting this ratio equal to its conjugate leads to the equation
\begin{equation}\label{eqn:1angle:prima}
 (a_0\overline{\tau}+a_1)(b_0\tau+b_1)=\mu^2 (a_0\tau+a_1)(b_0\overline{\tau}+b_1).
\end{equation}
This shows that $\mu^2\in\Q(\tau,\overline{\tau})$.
Equation~\eqref{eqn:1angle:prima} is bi-homogeneous of degree 1 in both  $a_0,a_1$ and $b_0,b_1$, so it defines a curve $\Ci\subseteq\P_1\times\P_1$.
Setting
\begin{align*}
 A&=\tau\overline{\tau} & B&=\frac{\mu^2\tau-\overline{\tau}}{\mu^2-1} & C&=\frac{\mu^2\overline{\tau}-\tau}{\mu^2-1},
\end{align*}
we can rearrange equation \eqref{eqn:1angle:prima} as
\begin{equation}\label{eqn:1angle:seconda}
 a_0 b_0 A+a_0 b_1 B + a_1 b_0 C + a_1 b_1 =0.
\end{equation}
The curve $\Ci$ has genus 0 and it is irreducible. In fact, it has bidegree $(1,1)$ and, if it were not irreducible, it would have two components of bidegrees $(0,1)$ and $(1,0)$. 
This happens if and only if $A=BC$, and a small computation shows that this happens if and only if $\tau\in\R$.

There is a bijection between $\Ci(\Q)$ and the set of rational angles $(v_1,v_2)$ in $V$ with $\arg(v_2/v_1)=\mu$. However, $\Ci$ is in general not defined over $\Q$, but only over the field $\Q(\tau,\overline{\tau})\cap \R$. This fact plays a role in the characterization of spaces with infinitely many rational angles.

\subsection{The equations of a rational \texorpdfstring{$n$}{n}-tuple containing 1}\label{section:equations:tupla}
Let $V$ be a space with a rational $n$-tuple ($n\geq 3$). Up to homothety of the space and equivalence of rational angles we can assume that $V=\generatedQ{1,\tau}$, and that the $n$-tuple is given by $\{1,\tau,\tau+a_1,\dotsc,\tau+a_{n-2}\}$, where the $a_j$ are distinct rational numbers different from 0.
We write $\tau=r\theta_0$, with $r=\abs{\tau}$ and $\theta_0\in\radu$. Similarly, let $\tau+a_j=\abs{\tau+a_j}\theta_j$ for $j=1,\dotsc,n-2$.

In particular we have that $(r\theta_0+a_j)/\theta_j\in\R$ for $j=1,\dotsc,n-2$. Equating these numbers and their conjugates, we can write
\begin{align*}
 \frac{\tau+a_j}{\theta_j}&=\frac{\overline{\tau}+a_j}{\theta_j^{-1}}\\
 \frac{r\theta_0+a_j}{\theta_j}&=\frac{r\theta_0^{-1}+a_j}{\theta_j^{-1}}
\end{align*}
which we can solve for $\theta_j$, $r$ or $\tau$ obtaining 
\begin{align}
\label{eq:tupla:uno}x_j&=\frac{\tau+a_j}{\overline{\tau}+a_j} & & \\
\label{eq:tupla:due} r&=a_j \theta_0\frac{x_j-1}{x_0-x_j} & j&=1,\dotsc,n-2\\
\label{eq:tupla:tre}\tau&=a_j x_0\frac{x_j-1}{x_0-x_j}, & &
\end{align}
where we have set $x_j=\theta_j^2$ for $j=0,\dotsc,n-2$.

\subsection{The equations of a rational \texorpdfstring{$n$}{n}-tuple not containing 1}\label{section:equations:nangle}
Let $V=\generatedQ{1,\tau}$ and $\tau=r\theta_0$, with $r=\abs{\tau}$ and $\theta_0\in\radu$. Let $\{v_0,\dotsc,v_{n-1}\}$ be a rational $n$-tuple ($n\geq 2$) which does not contain vectors proportional to 1 or $\tau$. By rescaling we can assume $v_j=\tau+b_j$ for $j=0,\dotsc,n-1$ and the $b_j$ distinct non-zero rational numbers. Let $\mu_j\in\radu$ such that $\mu_j(\tau+b_0)/(\tau+b_j)\in\R$ for $j=1,\dotsc,n-1$. Then we have
\begin{align}
 \mu_j\frac{\tau+b_0}{\tau+b_j}&=\mu_j^{-1}\frac{\overline{\tau}+b_0}{\overline{\tau}+b_j}\\
\label{eq:nangle} y_j(\tau+b_0)(\overline{\tau}+b_j)&=(\overline{\tau}+b_0)(\tau+b_j)\\
 y_j(r\theta_0+b_0)(r\theta_0^{-1}+b_j)&=(r\theta_0^{-1}+b_0)(r\theta_0+b_j)\\
 y_j\left( r^2+r b_j \theta_0 +\frac{r b_0}{\theta_0}+b_0 b_j\right)&=r^2+\frac{r b_j}{\theta_0} +r b_0\theta_0+b_0 b_j,
 \end{align}
 where we have set $y_j=\mu_j^2$.
 Writing these equations as quadratic equations in $r$ we get
 \begin{align}
 \label{eq:nangle:uno}r^2(y_j-1)+\frac{r}{\theta_0}\big(b_0(y_j-x_0)+b_j(x_0 y_j -1)\big)+b_0 b_j (y_j-1)&=0\\
 \label{eq:nangle:due}\tau^2 (y_j-1)+\tau\left[b_0(y_j-x_0)+b_j(x_0 y_j -1)\right]+b_0 b_j x_0 (y_j-1)&=0
\end{align}
for $j=1,\dotsc,n-1$.

\subsection{Equations for the three main cases}\label{section:3casi}
We begin by observing that, if $V$ is a space of type \circled{2}, it is homothetic to $\generated{1,\theta r}$ with $\theta\in U\setminus\{\pm 1\}$.

If a space $V$ has two nonequivalent rational angles, we have seen in Sections~\ref{section:equations:tupla} and \ref{section:equations:nangle} that we can derive equations for $\tau$ with coefficients in cyclotomic fields, therefore $\tau\in\overline{\Q}$.

If we have two independent such equations, we can eliminate $\tau$ and obtain one equation in roots of unity with rational coefficients. We shall then apply the results of Section~\ref{section:unit-equation} with the aim of bounding the degree of the roots of unity intervening in the equation, outside of certain families which admit a parametrization.

When eliminating $\tau$ we could have two equations of shape \eqref{eq:tupla:tre} (which amounts to having a rational 4-tuple), or one of shape \eqref{eq:tupla:tre} and one of shape \eqref{eq:nangle:due} (which amounts to one rational triple and one more angle not adjacent to it) or two equations of shape \eqref{eq:nangle:due}, for which we need three rational angles pairwise non-adjacent.
These are the cases that we denote as type \circled{4}, type \circled{3}+\circled{2} and type \circled{2}+\circled{2}+\circled{2} and they will constitute the main equations, whose solutions we shall seek by means of Theorem~\ref{thm:unit.equation}. For ease of notation, we change variables here with respect to those considered in Sections \ref{section:equations:tupla} and \ref{section:equations:nangle}.

\medskip

Case \circled{4}

\medskip

In this case we have two equations of the shape \eqref{eq:tupla:tre}. Eliminating $\tau$ gives
\begin{equation}\label{equation:caso4:A}
(a_1-a_2)+a_2\frac{x_1}{x_0}-a_1\frac{x_2}{x_0}-a_1 x_1 +a_2 x_2+(a_1-a_2)\frac{x_1 x_2}{x_0}=0,
\end{equation}
where $a_0,a_1$ are distinct rational numbers different from 0 and $x_0,x_1,x_2$ are distinct roots of unity different from one. 

Now we set $x_0=x,x_1=y,x_2=z$ and $a_1=a,a_2=b$ and obtain

\begin{equation}\label{equation:caso4:B}
(a-b)x+by-az-a x y +b x z +(a-b) y z =0.
\end{equation}

\medskip

Case \circled{3}+\circled{2}

\medskip

In this case we have one equation of the shape \eqref{eq:tupla:tre} and one of the shape \eqref{eq:nangle:due}. Eliminating $\tau$ gives
\begin{multline*}%
 (2 {a_1}^2-{a_1} ({b_0}+{b_1}) +2 {b_0} {b_1})
 -{a_1} {b_0} {x_0}
 -{a_1}{b_1}\frac{1}{{x_0}}
 +{a_1}  ({b_0}-{a_1}){x_1} 
 +{a_1} ({b_1}-{a_1})\frac{1}{{x_1}}\\
 + {b_0}({a_1}-{b_1})\frac{ {x_0} }{{x_1}}+{b_1}({a_1}-{b_0})\frac{ {x_1} }{{x_0}}+ \left(-2 {a_1}^2+{a_1} ({b_0}+{b_1})-2 {b_0} {b_1}\right){y_1}\\
 +{a_1} {b_1} {x_0} {y_1}
  +\frac{{a_1} {b_0}{y_1}}{{x_0}}
 +{a_1} {x_1} {y_1} ({a_1}-{b_1})
 +\frac{{a_1} {y_1} ({a_1}-{b_0})}{{x_1}}\\
+{b_0} ({b_1}-{a_1})\frac{{x_1} {y_1} }{{x_0}}
+{b_1}({b_0}-{a_1})\frac{ {x_0} {y_1} }{{x_1}}=0,\\
\end{multline*}
where $a_1,b_0,b_1$ are rational, with $b_0\neq b_1$ and $a_1\neq 0$. Furthermore if one of $b_0,b_1$ is equal to $a_1$ or $0$ the configuration reduces to that of the case \circled{4}, so we may also assume that $a_1,b_0,b_1$ are all distinct and nonzero. The roots of unity $x_0,x_1,y_1$ are different from one, and $x_0\neq x_1$.

By setting $x_0=x,x_1=y,y_1=z$ and $a_1=a, b_0=b, b_1=c$ we write

\begin{multline}\label{equation:caso32:B}
\left(2 a^2-a (b+c)+2 b c\right) x y -a b x^2 y -a c y -a  (a-b)x y^2-a  (a-c)x\\
+ b  (a-c)x^2+c  (a-b)y^2-\left(2 a^2-a (b+c)+2 b c\right) x y z\\
+a c x^2 y z+a b y z+a (a-c)x y^2 z+a  (a-b)x z-c (a-b)x^2 z -b  (a-c)y^2 z=0.\\
\end{multline}

\medskip

Case \circled{2}+\circled{2}+\circled{2}

\medskip

In this case we have two equations of the shape \eqref{eq:nangle:due}. Eliminating $\tau$ gives an expression in 
the four rational parameters $b_0,b_1,c_0,c_1$ and the three roots of unity $x_0,y_0,z_0$, where $b_0\neq b_1$, $c_0\neq c_1$, $x_0,y_0,z_0\neq 1$. Furthermore, if the set $\{0,b_0,b_1,c_0,c_1\}$ contains fewer than five distinct elements, we are in a case treated previously, so we may also assume that the $b_0,b_1,c_0,c_1$ are all distinct and nonzero.

By setting $x_0=x,y_0=y,z_0=z$ and $b_0=a,b_1=b,c_0=c,c_1=d$ we obtain an unwieldy polynomial equation \begin{equation}\label{equation:caso222:B}P(x,y,z)=0\end{equation} of degree two in each of the three variables. For ease of reference, we list here in a table the coefficients of each term appearing in $P$.
\begin{equation}
\arraycolsep=1.5pt\def\arraystretch{2}
\label{table:caso222:coeff}
\begin{array}{|c|l|}
\hline
\text{Monomials}        &       \text{Coefficients}                                     \\
\hline
 x^2y^2z^2,1        &       -b (a - c) (b - d) d                                    \\
 x^2y^2z,z          &       b (a b c + a b d - 2 a c d - 2 b c d + c^2 d + c d^2)   \\
 x^2yz^2,y          &       d (a^2 b + a b^2 - 2 a b c - 2 a b d + a c d + b c d)   \\
 xy^2z^2,x          &       (a - c) (b - d) (a b + c d)                             \\
 x^2y^2,z^2         &       -b (b - c) c (a - d)                                    \\
 x^2yz,yz           &         -a^2 b c - a b^2 c - a^2 b d - a b^2 d + 8 a b c d - a c^2 d - 
 b c^2 d - a c d^2 - b c d^2\\
 x^2z^2,y^2         &       -a (b - c) (a - d) d                                    \\
 xy^2z,xz           &       \begin{aligned}-2 a^2 b^2 + a^2 b c + a b^2 c - 2 a b c^2 + a^2 b d + a b^2 d +\\+ 
 a c^2 d + b c^2 d - 2 a b d^2 + a c d^2 + b c d^2 - 2 c^2 d^2\end{aligned}\\
 xyz^2,xy           &       \begin{aligned}-2 a^2 b^2 + a^2 b c + a b^2 c + a^2 b d + a b^2 d - 2 a^2 c d +\\- 
 2 b^2 c d + a c^2 d + b c^2 d + a c d^2 + b c d^2 - 2 c^2 d^2\end{aligned}\\
 y^2z^2,x^2         &       -a (a - c) c (b - d)                                    \\
 x^2y,yz^2          &       c (a^2 b + a b^2 - 2 a b c - 2 a b d + a c d + b c d)   \\
 x^2z,y^2z          &       a (a b c + a b d - 2 a c d - 2 b c d + c^2 d + c d^2)   \\
 xy^2,xz^2          &       (b - c) (a - d) (a b + c d)                             \\
 xyz                &       \begin{aligned}2 (2 a^2 b^2 - a^2 b c - a b^2 c + 2 a b c^2 - a^2 b d - a b^2 d + 
   2 a^2 c d - 4 a b c d +\\+ 2 b^2 c d - a c^2 d - b c^2 d + 2 a b d^2 - a c d^2 - b c d^2 + 2 c^2 d^2)\end{aligned}\\
 \hline
\end{array}
\end{equation}
\begin{rem}
Notice that, while the shape of the equation \eqref{equation:caso222:B} is not perfectly symmetric in the three angles, by applying an homothety we can always easily permute the three angles. This amounts to saying that the equation is stable by the substitution
which sends the triples $(1,\tau),(\tau+a,\tau+b),(\tau+c,\tau+d)$ with angles (squared) $(x,y,z)$ to $(1,\tau'),(\tau'-1,\tau'-\frac{a}{b}),(\tau'+\frac{a-c}{c-b}),\tau'+\frac{a-d}{d-b})$ with angles (squared) $(y,x,z)$.
\end{rem}

We observe that the non-degeneracy conditions coming from the geometry of the problem ensure that the equations
\eqref{equation:caso4:B},\eqref{equation:caso32:B},\eqref{equation:caso222:B} do not identically vanish.

In cases \circled{4} and \circled{3}+\circled{2}, given a solution of equations \eqref{equation:caso4:B} and\eqref{equation:caso32:B} one can easily obtain the corresponding value of $\tau$ and geometric configuration from equation \eqref{eq:tupla:tre}: we have
\begin{equation*}
 \tau=ax\frac{y-1}{x-y}.
\end{equation*}
It is easy to check directly that, under the required conditions, this value of $\tau$ is never real.

In case \circled{2}+\circled{2}+\circled{2} we have two equations of degree two for $\tau$, and it is not always possible to go back from a solution of \eqref{equation:caso222:B} to a geometrical configuration.

Given a solution of \eqref{equation:caso222:B}, if the quantity $ab-cd$ is different from 0, then the two quadratic equations for $\tau$ are independent, and it is possible to solve for $\tau$ obtaining
\begin{equation*}
 \tau=\frac{(cd-ab)x(y-1)(z-1)}{a(y-x)(z-1)+b(xy-1)(z-1)-c(z-x)(y-1)-d(xz-1)(y-1)}
\end{equation*}

If instead the equality $ab=cd$ holds, either $\tau=0$ is the only common solution, or the two equations are proportional, and in this case two values of $\tau$ are found. We can study fully the cases in which this happens.

Setting $d=ab/c$ and eliminating it, we see that the two equations are proportional if the following unit equation is satisfied
 \begin{equation*}%
   -b (a - c) + c(a - c)  x + a (b - c) y - c(b - c)  x y - c(b - c)  z +  a (b - c) x z + c(a - c)  y z - b (a - c) x y z=0.
\end{equation*}
Writing $\theta^2=x, \mu^2=y,\eta^2=z$ and dividing by $-\theta\mu\eta$ gives
 \begin{equation}\label{equation:equazioni.proporzionali.2}
   b (a - c)\Re(\theta\mu\eta) - c(a - c) \Re\left(\frac{\mu\eta}{\theta}\right) - a (b - c) \Re\left(\frac{\theta\eta}{\mu}\right) + c(b - c)\Re\left(\frac{\theta\mu}{\eta}\right)=0.
\end{equation}
This is a rational combination of four cosines of rational multiples of $\pi$, and all such combinations have been classified in \cite{ConwayJones}, Theorem 7.

\subsection{Equations in roots of unity}\label{section:unit-equation}
The study of linear relations among roots of unity goes back to long ago.
For instance in 1877 Gordan \cite{Gordan} studied the equation
\[\cos x + \cos y + \cos z =-1\]
with $x,y,z$ rational angles, with the purpose of classifying
the finite subgroups of $\PGL_2$.
The matter was considered by several other authors, also studying
polygons with rational angles and rational side-lengths.
Among these authors we point out Mann \cite{Mann} and Conway and Jones \cite{ConwayJones}.
These last authors described these issues, important for the present 
paper, as ``trigonometric diophantine equations''.
We do not pause further on other references, but we remark that this
problem is linked to the conjectures of Lang on torsion points on 
subvarieties of tori.

For simplicity we state a theorem of \cite{ConwayJones}, which we will apply to the equations in roots of unity that we have obtained before.

\begin{thm}[Conway-Jones]\label{thm:unit.equation}
 Let
 \[
  \sum_{j=0}^{k-1}a_j \xi_j=0
 \]
be a linear relation with rational coefficients $a_j$ between roots of unity $\xi_j$, normalized with $\xi_0=1$.
Then either there is a vanishing subsum, or the common order $Q$ of the $\xi_j$ is a squarefree number satisfying
\[
 \sum_{p\mid Q}(p-2)\leq k-2.
 \]
\end{thm}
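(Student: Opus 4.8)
The plan is to prove the Conway–Jones theorem by induction on the number of terms $k$, following the classical strategy of reducing a general linear relation in roots of unity to one supported on $p$-th roots of unity for a single prime $p$. First I would normalize: multiply through so that all $\xi_j$ are roots of unity of a common order $Q$, and assume $Q>1$ (otherwise the statement is vacuous). The key structural tool is the decomposition $\Z[\zeta_Q] = \bigotimes_{p\mid Q}\Z[\zeta_{p^{e_p}}]$, or more concretely: if $p^e\|Q$ and $p\geq 3$, then $1,\zeta_p,\dots,\zeta_p^{p-2}$ is a $\Q(\zeta_{Q/p})$-basis of $\Q(\zeta_Q)$. The idea is to write each $\xi_j$ in the relation in terms of this basis and collect coefficients, obtaining a system of $p-1$ shorter relations among roots of unity of order dividing $Q/p$ (after clearing the chosen representative), to which the inductive hypothesis applies.

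The heart of the argument, and the main obstacle, is the dichotomy between the case where some proper subsum vanishes and the case where none does. If a subsum vanishes, one simply splits the relation into two (or more) shorter relations, and the conclusion follows because the bound $\sum_{p\mid Q}(p-2)\le k-2$ is, roughly, subadditive when one partitions the index set — though this needs care, since the set of primes dividing $Q$ could differ between the pieces, and one must check that the worst case still obeys the inequality. If no subsum vanishes, one picks a prime $p\mid Q$, groups the $\xi_j$ according to their residue $\zeta_p^{i}$ after factoring out the $\Z[\zeta_{Q/p}]$-part, and argues that \emph{each} resulting group of coefficients-times-roots-of-unity-of-order-dividing-$Q/p$ must itself vanish (because $1,\zeta_p,\dots,\zeta_p^{p-2}$ are linearly independent over $\Q(\zeta_{Q/p})$). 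The delicate point is handling the $p=2$ case (where $\zeta_2=-1$ collapses the basis argument) and, more seriously, showing $Q$ is squarefree: if $p^2\mid Q$ one uses that $\zeta_{p^2}$ does not lie in $\Q(\zeta_{Q/p})$ in a way that forces, combined with the no-vanishing-subsum hypothesis, a contradiction with minimality of $k$; keeping the bookkeeping of how $k$ decreases versus how $\sum_p(p-2)$ decreases is exactly where the constant $2$ on the right-hand side gets pinned down.

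After the structural reduction, the base case is the situation where $Q$ is prime, say $Q=p$: here the relation is among $p$-th roots of unity, the minimal polynomial $1+x+\dots+x^{p-1}$ of $\zeta_p$ has degree $p-1$, so a $\Q$-linear relation with no vanishing subsum and $\xi_0=1$ must in fact use all of $1,\zeta_p,\dots,\zeta_p^{p-1}$ with equal coefficients (up to scaling it is $1+\zeta_p+\dots+\zeta_p^{p-1}=0$), giving $k=p$ and $\sum_{q\mid Q}(q-2)=p-2=k-2$, with equality. This both anchors the induction and shows the bound is sharp. I would then assemble the inductive step: assume the theorem for all relations with fewer than $k$ terms, take a minimal counterexample, rule out vanishing subsums (handled above), pick $p\mid Q$, apply the basis decomposition to split into relations of length $k_0,\dots,k_{p-2}$ with $\sum k_i = k$ among roots of unity of order dividing $Q/p$, apply induction to each nonzero piece, and reassemble; the inequality $\sum_{q\mid Q}(q-2)\le k-2$ emerges from $(p-2) + \sum_{q\mid Q/p}(q-2) \le (p-2) + \text{(contribution from the pieces)}$, where one uses that at least one piece is ``large'' — this is the precise combinatorial estimate that requires the no-vanishing-subsum hypothesis to have been already dispatched.

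I should note that since this is quoted as a known theorem of Conway and Jones, in the actual paper it suffices to cite \cite{ConwayJones} rather than reprove it; the sketch above is the proof one would give if a self-contained argument were wanted. The only genuinely subtle ingredients are (i) the field-theoretic fact that for odd $p$ with $p^e\|Q$ the powers $1,\zeta_p,\dots,\zeta_p^{p-2}$ form a $\Q(\zeta_{Q/p})$-basis of $\Q(\zeta_Q)$, and (ii) the squarefree conclusion, which in Conway–Jones is obtained by an extra descent using $\zeta_{p^2}^p=\zeta_p$ to replace a relation over $\Q(\zeta_Q)$ by a shorter one; these two points are where I would expect to spend essentially all the real effort.
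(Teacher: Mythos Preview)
The paper does not prove this theorem at all: it is stated with attribution to Conway and Jones \cite{ConwayJones} and used as a black box, with a remark that generalizations appear in \cite{DvoZan00} and \cite{DvoZan02}. You correctly anticipate this in your final paragraph, so your proposal is appropriate in that respect.

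Your sketch of the underlying argument is broadly the standard one, but one point is misstated and would cause trouble if you tried to carry it through. When you group the terms according to the $\zeta_p$-factor and invoke the linear independence of $1,\zeta_p,\dots,\zeta_p^{p-2}$ over $\Q(\zeta_{Q/p})$, the conclusion is \emph{not} that each group vanishes separately. Rather, since $1+\zeta_p+\dots+\zeta_p^{p-1}=0$ is the unique $\Q(\zeta_{Q/p})$-linear relation among $1,\zeta_p,\dots,\zeta_p^{p-1}$, the $p$ partial sums $S_0,\dots,S_{p-1}$ must all be \emph{equal} to a common value $S$. The no-vanishing-subsum hypothesis then forces every residue class to be nonempty (an empty class would give $S=0$, making each nonempty $S_i$ a vanishing subsum), so $k\ge p$; iterating over the primes and keeping track of how terms are shared between the resulting shorter relations is what produces the inequality $\sum_{p\mid Q}(p-2)\le k-2$. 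The squarefree conclusion likewise comes from this step: if $p^2\mid Q$, the residue classes are taken modulo $p^{m-1}$ rather than modulo $p$, and the analogous count forces a vanishing subsum. With that correction your outline matches the classical proof.
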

A generalization of this theorem, with a different proof, was given in \cite{DvoZan00}, and a version which takes into account reductions modulo prime numbers in \cite{DvoZan02}.

\section{Spaces with special symmetries}\label{sec:simmetrie}
We study now some special classes of spaces which are stable under some angle-preserving transformation. They are relevant to our program because the presence of such symmetries can lead to a richer set of rational angles.

\subsection{Spaces with \texorpdfstring{$V = \overline{V}$}{V=V conj}}\label{sec:autoconiugati}
The following lemma characterizes, up to homothety, the spaces which are fixed by the complex conjugation.
\begin{lemma}\label{lemma1}
 Let $V$ be a space. The following conditions are equivalent:
 \begin{enumerate}[(i)]
  \item\label{lemma1:i} $V$ is homothetic to $\generatedQ{1,\tau}$ with $\abs{\tau}=1$.
  \item\label{lemma1:ii} $V$ is homothetic to $\generatedQ{1,\tau}$ with $\tau\neq 0$ a purely imaginary number
  \item\label{lemma1:iii} $V$ is homothetic to a space $V'$ with $V'=\overline{V'}$.
 \end{enumerate}
\end{lemma}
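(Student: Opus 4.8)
The plan is to prove the cycle of implications $(iii)\Rightarrow(i)\Rightarrow(ii)\Rightarrow(iii)$. The equivalence $(i)\Leftrightarrow(ii)$ will be produced by a single Cayley-type Möbius transformation defined over $\Q$, so it costs essentially nothing; the implication $(ii)\Rightarrow(iii)$ is immediate; and the only substantial step, $(iii)\Rightarrow(i)$, will come from diagonalising the conjugation involution on $V'$.

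For $(i)\Leftrightarrow(ii)$ I would use the map $f(z)=\frac{z-1}{z+1}$, which corresponds to the matrix $\left(\begin{smallmatrix}1 & -1\\ 1 & 1\end{smallmatrix}\right)\in\PGL_2(\Q)$ (determinant $2$); by the criterion for homothety recalled in Section~\ref{sec:definizioni} this means $V(\tau)\sim_h V(f(\tau))$ whenever $f(\tau)$ is defined and non-real. As a Möbius map $f$ sends $1\mapsto 0$, $-1\mapsto\infty$, $i\mapsto i$, so it interchanges the unit circle (minus $\{\pm 1\}$) with the imaginary axis (minus $\{0\}$); concretely this is read off from the two one-line identities $f(z)+\overline{f(z)}=\frac{2(|z|^2-1)}{|z+1|^2}$ and $|f(z)|^2=\frac{|z|^2-2\Re z+1}{|z|^2+2\Re z+1}$, the first showing $f(z)$ purely imaginary when $|z|=1$, the second showing $|f(z)|=1$ when $\Re z=0$. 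In either case one checks that the image is non-real (it is neither $0$ nor $\pm 1$), so $V(f(\tau))$ is again an admissible space. Hence applying $f$ carries a generator $\tau$ with $|\tau|=1$ to a non-zero purely imaginary one, and a non-zero purely imaginary generator to one of modulus $1$; this gives $(i)\Leftrightarrow(ii)$.

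The implication $(ii)\Rightarrow(iii)$ is trivial: if $\tau$ is purely imaginary then $\overline{V(\tau)}=\generatedQ{1,-\tau}=\generatedQ{1,\tau}=V(\tau)$, so one may take $V'=V(\tau)$. For $(iii)\Rightarrow(i)$, by transitivity of $\sim_h$ it suffices to start from a space $V'$ with $\overline{V'}=V'$ and exhibit $\tau$ with $|\tau|=1$ and $V'\sim_h V(\tau)$. Complex conjugation restricts to a $\Q$-linear involution of $V'$, and since $\operatorname{char}\Q\neq 2$ it splits $V'$ as $(V'\cap\R)\oplus(V'\cap i\R)$. Because $V'$ contains two $\R$-linearly independent vectors it is contained neither in $\R$ nor in $i\R$, so both summands are $1$-dimensional over $\Q$; write $V'=\Q\alpha\oplus\Q(i\beta)$ with $\alpha,\beta\in\R^*$. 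Then $\gamma:=\alpha+i\beta\in V'$ is non-real, $\overline\gamma=\alpha-i\beta\in V'$, and $\overline\gamma/\gamma\notin\R$ (otherwise it would equal its own inverse, forcing $\alpha=0$ or $\beta=0$), so $\{\gamma,\overline\gamma\}$ is a $\Q$-basis of $V'$ and the homothety $z\mapsto z/\gamma$ sends $V'$ to $\generatedQ{1,\overline\gamma/\gamma}=V(\tau)$ with $\tau=\overline\gamma/\gamma$ of modulus $1$. This closes the cycle and proves the lemma.

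I do not expect a genuine obstacle. The two places that require a moment of care are (a) verifying that the Möbius image $f(\tau)$ is never real, so that the target of the homothety is a legitimate space rather than a copy of $\Q$, and (b) the one place where the standing hypothesis on spaces is actually used, namely that neither eigenspace of the conjugation involution can exhaust $V'$ — this is exactly what prevents $V'\subseteq\R$ or $V'\subseteq i\R$ and forces both eigenspaces to be lines. Everything else is routine verification.
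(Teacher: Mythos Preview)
Your proof is correct and follows essentially the same approach as the paper: the same Möbius map $\tau\mapsto\frac{\tau-1}{\tau+1}$ for $(i)\Leftrightarrow(ii)$, and the same diagonalisation of the conjugation involution on $V'$ for the substantive implication. The only cosmetic difference is that the paper lands in $(ii)$ by taking the purely imaginary ratio $w_2/w_1$ of the eigenvectors, whereas you land in $(i)$ by taking the unit-modulus ratio $\overline\gamma/\gamma$; these are two sides of the same Cayley transform and neither gains anything over the other.
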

\begin{proof}
 $\eqref{lemma1:i}\Leftrightarrow\eqref{lemma1:ii}$ We have that
 \[
  \generatedQ{1,\tau}=\generatedQ{\tau+1,\tau-1}\sim_h\generatedQ{1,\frac{\tau-1}{\tau+1}}
 \]
and $\tau\in S^1\setminus\{\pm 1\}$ if and only if $\frac{\tau-1}{\tau+1}$ is purely imaginary and non-zero.

\def\Immtheta{40}
\begin{center}
\begin{tikzpicture}[scale=1.5]
\coordinate (A) at (-1,0);
\coordinate (B) at (2.5,0);
\coordinate (C) at (0,-0.5);
\coordinate (D) at (0,1.5);
\coordinate (uno) at (1,0);
\node[anchor=90]at(uno){$1$};
\coordinate (tau) at ({cos(\Immtheta)},{sin(\Immtheta)});
\node[anchor=0]at(tau){$\tau$};
\coordinate (tau1) at ($(tau)+(1,0)$);
\node[anchor=180]at(tau1){$\tau+1$};
\coordinate (tau-1) at ($(tau)-(1,0)$);
\node[anchor=0]at(tau-1){$\tau-1$};
\coordinate (2tau) at ($2*(tau)$);
\node[anchor=180]at(2tau){$2\tau$};
\coordinate (O) at (0,0);
\node[anchor=45]at(O){$O$};
\draw [->,thin, name path=ab] (A)--(B);
\draw [->,thin, name path=cd] (C)--(D);
\draw [->,thick,red, name path=ac] (O)--(tau);
\draw [->,thick,red, name path=ac] (O)--(uno);
\draw [thin, name path=cd] (uno)--(tau1);
\draw [thin, name path=ac] (tau)--(tau1);
\draw [thin,dotted, name path=cd] (uno)--(tau);
\draw [->,thick,blue, name path=ac] (O)--(tau1);
\draw [->,thick,blue, name path=ac] (O)--(tau-1);
\draw [thin, name path=ac] (tau-1)--(2tau);
\draw [thin, name path=ac] (tau1)--(2tau);
\end{tikzpicture}
\end{center}

$\eqref{lemma1:ii}\Rightarrow\eqref{lemma1:iii}$ Clear.

$\eqref{lemma1:ii}\Leftarrow\eqref{lemma1:iii}$ Let $V'=\generatedQ{v_1,v_2}=\generatedQ{\overline{v_1},\overline{v_2}}$. This means that $\overline{v_1}=av_1+bv_2$ and $\overline{v_2}=cv_1+dv_2$ for some matrix $M=\left(\begin{smallmatrix}a & b\\ c & d\end{smallmatrix}\right)\in \GL_2(\Q)$, and $M^2=Id$.

The eigenvalues of $M$ can only be $\pm 1$. If they were equal, then $M$ would either be of infinite order, or be equal to  $\pm Id$, but this is impossible because otherwise $v_1,v_2$ would be both real or both purely imaginary.

Therefore $M$ has distinct eigenvalues $1$ and $-1$, and it can be diagonalized over $\Q$, that is to say that $V'=\generatedQ{w_1,w_2}$ with $w_1\in\R$ and $w_2\in i\R$. Now we have $V\sim_hV'\sim_h\generatedQ{1,w_2/w_1}$.
\end{proof}
It seems natural to compare the three properties (i), (ii), (iii) with 
the condition $V\sim_h\overline{V}$.
It is obvious that (iii) implies this condition.
However the converse implication does not hold: see Section~\ref{sec:omotetici_al_simmetrico}, 
especially Lemma~\ref{lemma2}, for this.

\subsection{CM spaces}\label{section:CM}
We say that a space $V$  has \textit{Complex Multiplication} if there is a $\lambda\in\C\setminus\Q$ such that the multiplication by $\lambda$ sends $V$ to itself. It is easy to see that this happens if and only if  $V\sim_h \Q(\tau)$ with $\tau$ imaginary quadratic, and in this case the homothetic coefficient $\lambda$ can be taken as any element in $V\setminus\Q$.

This shows that if the space is stabilized by a nontrivial homothety there are infinitely many other nontrivial homotheties which stabilize it, and the image of any rational angle under any such homothety is again a rational angle.

The following theorem summarizes the situation and fully describes the set of rational angles in a CM space, up to equivalence of angles and the action of the space on itself by multiplication.

\begin{thm}\label{thm:CM}
 Let $V$ be a space.
 \begin{enumerate}[(i)]
  \item\label{thm:CM:i} $V$ is CM if and only if $V\sim_h \Q(\sqrt{-d})$ for a squarefree $d\in\N$
  \item\label{thm:CM:ii} For a squarefree $d\neq 1,3$ the rational angles in $\Q(\sqrt{-d})$ are, up to equivalence, precisely those of the form $(v,\sqrt{-d}\cdot v)$ with $v\in\Q(\sqrt{-d})$
  \item\label{thm:CM:iii} The rational angles in $\Q(i)$ are, up to equivalence, precisely those of the form $(v,\lambda \cdot v)$ with $v\in\Q(i)$ and $\lambda=i,i+1,i-1$
  \item\label{thm:CM:iv} The rational angles in $\Q(\sqrt{-3})$ are, up to equivalence, precisely those of the form $(v,\lambda \cdot v)$ with $v\in\Q(\sqrt{-3})$ and $\lambda=\sqrt{-3},\zeta,\zeta-1,\zeta+1,\zeta+2$, where $\zeta=\frac{-1+\sqrt{-3}}{2}$.
 \end{enumerate}
\end{thm}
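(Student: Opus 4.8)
The plan is to reduce everything to the analysis of a single rational angle carried out in Section~\ref{section:equation:1angle}, exploiting the fact that in a CM space $V\sim_h\Q(\tau)$ the homotheties by elements of $V\setminus\Q$ act on the set of rational angles. First I would dispatch (i): if $V$ is CM there is $\lambda\in\C\setminus\Q$ with $\lambda V\subseteq V$; writing $V=\generatedQ{1,\tau}$ we get $\lambda,\lambda\tau\in V$, so $\lambda=p+q\tau$ and $\lambda\tau=r+s\tau$ with rationals $p,q,r,s$ and $q\neq 0$, whence $\tau$ satisfies $q\tau^2+(p-s)\tau-r=0$, a quadratic over $\Q$; since $\tau\notin\R$ this quadratic is irreducible with negative discriminant, so $\Q(\tau)=\Q(\sqrt{-d})$ for a unique squarefree $d\in\N$, and $V=\generatedQ{1,\tau}=\Q(\tau)$. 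Conversely $\Q(\sqrt{-d})$ is a field, hence stable under multiplication by $\sqrt{-d}\notin\Q$, so it is CM. This also records that for the remaining parts we may take $V=\Q(\sqrt{-d})$ itself.

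For (ii)–(iv) the key observation is that, by the remark preceding the theorem, multiplication by any $v\in V\setminus\Q$ is an angle-preserving bijection of $V$ onto itself, so every rational angle $(v_1,v_2)$ in $V$ is equivalent (after multiplying by $v_1^{-1}\in V$) to one of the form $(1,w)$ with $w=v_2/v_1\in V\setminus\R$. Thus it suffices to determine for which $w\in\Q(\sqrt{-d})\setminus\R$ the argument of $w$ is a rational multiple of $\pi$, i.e.\ $w/\bar w=\zeta$ is a root of unity (and then the angle is $(1,w)$, equal up to a real scalar to $(1,w/\bar w)=(1,\zeta)$, but we keep the representative $w$ so that it lies in $V$). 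Now $\zeta=w/\bar w\in\Q(\sqrt{-d})$, so $\zeta$ is a root of unity lying in the imaginary quadratic field $\Q(\sqrt{-d})$; the classical determination of roots of unity in quadratic fields (equivalently, $\phi(\mathrm{ord}\,\zeta)\le 2$, as recalled in the Appendix for the degree of cyclotomic fields) gives $\zeta\in\mu_4$ only for $d=1$, $\zeta\in\mu_6$ only for $d=3$, and $\zeta\in\{\pm1\}$ otherwise. Since $\zeta=\pm1$ corresponds to $w\in\R$, which is excluded, for $d\neq 1,3$ there are no rational angles of the shape $(1,w)$ unless — and here is the point — we allow $w$ with $w/\bar w=\pm1$ only trivially; instead the genuine rational angles come from pairs $(v_1,v_2)$ with $v_2/v_1\notin V$. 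So I would redo the reduction slightly: multiply $(v_1,v_2)$ by $\bar v_1\in V$ to get $(v_1\bar v_1, v_2\bar v_1)=(\,|v_1|^2, v_2\bar v_1)$, i.e.\ up to the positive real scalar $|v_1|^2$ the angle is $(1,v_2\bar v_1)$ with $v_2\bar v_1\in V$; its argument is rational iff $v_2\bar v_1/(\bar v_2 v_1)=(v_2/v_1)/(\overline{v_2/v_1})$ is a root of unity $\zeta$, and setting $x=v_2\bar v_1\in V$ we have $\zeta=x/\bar x$. Then $\zeta\in V=\Q(\sqrt{-d})$ exactly as before, so $\zeta$ is a root of unity in $\Q(\sqrt{-d})$; and $x/\bar x=\zeta$ together with $x\in\Q(\sqrt{-d})$ lets one write $x$ explicitly. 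Concretely, for $d\neq1,3$ one gets $\zeta\in\{\pm1\}$ forces $v_2/v_1\in\R\cup i\sqrt{-d}\,\R$; the case $v_2/v_1\in i\sqrt{-d}\,\R$, i.e.\ $\zeta=-1$ after correction, is exactly $(v,\sqrt{-d}\,v)$, giving (ii). For $d=1$ the extra roots of unity $\zeta=\pm i$ give, solving $x/\bar x=\pm i$ in $\Q(i)$, the multipliers $\lambda\in\{i,i\pm1\}$ up to real scalars and up to $i\leftrightarrow -i$ (complex conjugation), yielding (iii); and for $d=3$ the primitive sixth roots $\zeta$ give $\lambda\in\{\sqrt{-3},\zeta,\zeta\pm1,\zeta+2\}$ with $\zeta=\tfrac{-1+\sqrt{-3}}2$, yielding (iv).

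The routine part is the bookkeeping in (iii) and (iv): for each admissible root of unity $\zeta$ one must solve $x/\bar x=\zeta$ with $x\in\Q(\sqrt{-d})$, normalise $x$ by a real scalar, and then group the resulting multipliers into equivalence classes under $\zeta\mapsto\zeta^{-1}$ (complex conjugation) and under scaling, checking that the listed $\lambda$'s are a complete irredundant set and that each indeed occurs (e.g.\ $\lambda=i+1$ is realised by $(1,\,i+1)$ in $\Q(i)$, $\lambda=\zeta+2$ by $(1,\zeta+2)$ in $\Q(\sqrt{-3})$, since $\arg(i+1)=\pi/4$ and $\arg(\zeta+2)=\arg(\tfrac{3}{2}+\tfrac{\sqrt3}{2}i)=\pi/6$). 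The main obstacle — though a mild one — is not any single deep step but making the reduction ``every rational angle is equivalent to one with a vertex‑side in $V$'' airtight while simultaneously tracking the two quotients we are allowed (multiplying a side by an element of $V$, and swapping the two sides), so that the final lists are proved to be both complete and non‑redundant; all of this rests only on the elementary fact, used already in the Appendix, that $\Q(\sqrt{-d})$ contains no roots of unity beyond $\pm1$ except for $d=1,3$.
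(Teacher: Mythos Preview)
Your strategy matches the paper's: reduce an arbitrary rational angle $(v_1,v_2)$ in $V=\Q(\sqrt{-d})$ to one of the form $(1,w)$ using that $V$ is a field, note that $w/\bar w$ is a root of unity lying in $\Q(\sqrt{-d})$, and invoke the determination of such roots of unity. The paper phrases this through the rational-triple formula $x_1=(\tau+a_1)/(\overline\tau+a_1)$ of Section~\ref{section:equations:tupla}, but that quantity is literally $w/\bar w$.

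The write-up, however, goes astray in the middle. You assert that ``$\zeta=\pm1$ corresponds to $w\in\R$'': only $\zeta=1$ does, whereas $\zeta=-1$ means $w$ is purely imaginary, i.e.\ $w\in\Q\cdot\sqrt{-d}$, and that is precisely the conclusion of~(ii), not an obstruction. This slip leads you to the false sentence ``the genuine rational angles come from pairs $(v_1,v_2)$ with $v_2/v_1\notin V$'' (impossible, since $V$ is a field) and then to an unnecessary second reduction which is in fact identical to the first, as $\bar v_1=|v_1|^2\,v_1^{-1}$. In that redo you also write ``$i\sqrt{-d}\,\R$'' where you mean ``$\sqrt{-d}\,\R$'' (note that $i\sqrt{-d}\in\R$), and earlier the parenthetical ``$(1,w)$ equal up to a real scalar to $(1,\zeta)$'' is wrong since $\arg\zeta=2\arg w$. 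None of this breaks the underlying argument, but the proof of~(ii) collapses to one line once you recognise that $\zeta=-1$ is the desired case rather than an excluded one.
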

\begin{proof}
 In proving part \eqref{thm:CM:i} we can assume up to homothety (which preserves both conditions) that $V=\generatedQ{1,\tau}$ with $\tau\in\C\setminus\R$. As seen is section 1, 
 \[\tau=\frac{a\tau+b}{c\tau+d}\]
 for a $\left(\begin{smallmatrix}a & b\\ c & d\end{smallmatrix}\right)\in \PGL_2(\Q)$ if and only if the homothety with coefficient $1/(c\tau_1+d)$ sends $V$ to itself. If $\tau$ is quadratic irrational then the coefficients of its minimal polynomial provide the suitable $a,b,c,d$. Vice versa, if such a matrix exists we get immediately a quadratic polynomial satisfied by $\tau$.

Let us now prove part \eqref{thm:CM:ii}. Let $(v_0,v_1)$ be a rational angle. By considering the angle $(1,v_1/v_0)$ we can reduce to the case $v_0=1$. Then either $v_1$ is a rational multiple of $\sqrt{-d}$, which is what we need to prove, or $\{1,\sqrt{-d},v_1\}$ form a rational triple. Now we use the notation of Section~\ref{section:equations:tupla}. By equation \eqref{eq:tupla:uno} we see that $x_1$ is a root of unity in $\Q(\sqrt{-d})$, and therefore it must be $-1$. This means that $v_1$ lies on the imaginary axis, so it must indeed be a rational multiple of $\sqrt{-d}$.

 The proof of parts \eqref{thm:CM:iii} and \eqref{thm:CM:iv} is analogous. As before, $x_1$ must be a fourth (resp. sixth) root of unity, therefore $v_1$ lies on one of the lines whose angle with the real axis is an integral multiple of $\pi/4$ (resp $\pi/6$). The points $i,i+1,i-1$ (resp $\sqrt{-3},\zeta,\zeta-1,\zeta+1,\zeta+2$) lie on these lines, and any other such point must be a rational multiple of one of them. 
\end{proof}

We remark that if $V$ is obtained as $\Lambda\otimes\Q$ for a lattice $\Lambda=\generated{1,\tau}_\Z$, the condition that $V$ is a CM space in our sense is equivalent to saying that the elliptic curve $\C/\Lambda$ has Complex Multiplication.

\medskip

We have seen that CM spaces contain infinitely many rational angles. In fact they are the only spaces with this property.
\begin{thm}\label{thm:nonCM.angolifiniti}
Let $V$ be a non-CM space. Then $V$ has only finitely many rational angles.
\end{thm}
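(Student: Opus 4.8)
The plan is to argue by contradiction: suppose $V$ is a space with infinitely many rational angles, and deduce that $V$ must be CM. We may assume $V = \generatedQ{1,\tau}$ with $\Im(\tau) > 0$, so $\tau \in \C \setminus \R$. Each rational angle in $V$ is of one of the two basic shapes analysed in Sections~\ref{section:equations:tupla} and \ref{section:equations:nangle}: it either belongs to a rational tuple containing $1$ (leading to an equation of degree $1$ in $\tau$, as in \eqref{eq:tupla:tre}) or it does not (leading to an equation of degree $2$ in $\tau$, as in \eqref{eq:nangle:due}). In both cases the coefficients of these equations lie in cyclotomic fields, so a single rational angle already forces $\tau \in \overline{\Q}$; this is the first step. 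Once $\tau$ is algebraic, $V$ is determined by a finite amount of arithmetic data, and the issue becomes showing that infinitely many rational angles over-determine that data unless $\tau$ is imaginary quadratic.

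Next I would separate into the two regimes above according to which shape occurs infinitely often. \textbf{Case 1: infinitely many angles lie in rational tuples containing $1$.} Here, for infinitely many pairs $(a, x_0)$ with $a \in \Q^*$ and $x_0 = \theta_0^2$ a root of unity $\neq 1$, equation \eqref{eq:tupla:tre} gives $\tau = a x_0 (x_j - 1)/(x_0 - x_j)$ for the appropriate roots of unity $x_j$. Looking instead at the defining equation \eqref{eqn:1angle:prima} (equivalently \eqref{eqn:1angle:seconda}): a rational angle with fixed root of unity $\mu$ is a $\Q$-point of the curve $\Ci \subset \P_1 \times \P_1$ of bidegree $(1,1)$, which is defined over the field $\Q(\tau,\overline\tau)\cap\R$. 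A bidegree $(1,1)$ curve over a field $K$ has infinitely many $K$-rational points only when it is reducible or when $K$-points are Zariski dense, which for such a curve forces $\Ci$ (being rational of genus $0$) to have a $K$-point and hence infinitely many; tracking what this means for the coefficients $A = \tau\overline\tau$, $B$, $C$ one finds a nontrivial $\Q$-linear relation among $1, \tau, \overline\tau, \tau\overline\tau$. But more robustly: infinitely many rational angles means infinitely many roots of unity $\mu$ occur, and for two distinct such $\mu$ one eliminates $\tau$ between two instances of \eqref{eqn:1angle:prima} to obtain a nontrivial relation among roots of unity with coefficients in $\Q(\tau,\overline\tau)$; combined with the finiteness results quoted as Theorem~\ref{thm:unit.equation} (and its generalisations over number fields in \cite{DvoZan00}), only finitely many such $\mu$ can satisfy a fixed such relation unless the coefficient data degenerates, and the degeneration is exactly the condition that $\tau$ generates an imaginary quadratic field, i.e.\ $V$ is CM by Theorem~\ref{thm:CM}\eqref{thm:CM:i}. \textbf{Case 2: infinitely many angles not in a tuple containing $1$.} Then \eqref{eq:nangle:due} holds for infinitely many choices of $(b_0, b_1)$ and roots of unity $(x_0, y_1)$; picking two such and eliminating $\tau$ (exactly the elimination producing \eqref{equation:caso222:B}), we again land on a linear relation in roots of unity with rational coefficients, to which Theorem~\ref{thm:unit.equation} applies — giving a bound on the order of the roots of unity involved unless there is a vanishing subsum, and chasing the vanishing-subsum alternative back through the equations pins down $\tau$ up to finitely many possibilities per choice of the rational parameters, contradicting infinitude unless $V$ is CM.

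The cleanest uniform step, which I would actually foreground, is this: a space $V$ with two \emph{non-equivalent} rational angles already satisfies one of the main equations \eqref{equation:caso4:B}, \eqref{equation:caso32:B} or \eqref{equation:caso222:B} (after eliminating $\tau$), and $\tau$ itself is then a fixed algebraic number expressible rationally in the roots of unity involved, via the displayed formulas $\tau = a x (y-1)/(x-y)$ and its analogue. If $V$ had infinitely many rational angles, it would have rational angles with arbitrarily large "denominator" (order of $\mu$), hence infinitely many solutions of these unit equations with a \emph{fixed} value of $\tau$; Theorem~\ref{thm:unit.equation} bounds the order of the roots of unity in any solution without vanishing subsum, so all but finitely many solutions come from vanishing subsums, and one checks directly (case by case on which subsum vanishes, using that the $a_j$ are nonzero and distinct) that a vanishing proper subsum either is impossible for geometric reasons or forces a $\Q$-linear dependence among $1, \tau, \overline\tau, \tau\overline\tau$ — which is precisely the CM condition.

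\textbf{Main obstacle.} The delicate point is the bookkeeping in the vanishing-subsum analysis: Theorem~\ref{thm:unit.equation} only says "either bounded order or a vanishing subsum", and the subsums of \eqref{equation:caso222:B} (a polynomial with fifteen monomials, coefficients polynomial in four parameters) are numerous. Ruling each one out — or extracting from it the imaginary-quadratic conclusion — requires the geometric non-degeneracy hypotheses ($b_0, b_1, c_0, c_1$ distinct and nonzero, the roots of unity $\neq 1$ and pairwise distinct where required) to be used carefully, and is where the real work lies. A secondary subtlety is that $\Ci$ in Section~\ref{section:equation:1angle} is defined only over $\Q(\tau,\overline\tau)\cap\R$, not over $\Q$, so the unit-equation input must be taken in its number-field form from \cite{DvoZan00} rather than the rational-coefficient version stated as Theorem~\ref{thm:unit.equation}; I would invoke that generalisation explicitly.
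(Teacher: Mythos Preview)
Your proposal circles around the right idea but never commits to the short argument the paper actually gives, and the route you do commit to (unit-equation bookkeeping on \eqref{equation:caso222:B}) is left incomplete by your own admission.

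The paper's proof runs as follows. From \eqref{eqn:1angle:prima}, any rational angle in $V=\generatedQ{1,\tau}$ forces the corresponding $\mu^2$ to lie in $\Q(\tau,\overline\tau)$. Once $\tau$ is algebraic (which a single rational angle already guarantees), this number field contains only finitely many roots of unity, so only finitely many values of $\mu$ can arise at all. Then, for each such fixed $\mu$, the rational angles with that argument are the $\Q$-points of the irreducible bidegree-$(1,1)$ curve $\Ci$ of \eqref{eqn:1angle:seconda}. If the coefficients $A,B,C$ are not all rational, some Galois conjugate $\sigma(\Ci)$ is a different curve of bidegree $(1,1)$, and $\Ci(\Q)\subseteq \Ci\cap\sigma(\Ci)$ has at most two points. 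If instead $A,B,C\in\Q$, then $\tau\overline\tau=A$ and $\tau+\overline\tau=B+C$ are rational, so $\tau$ is imaginary quadratic and $V$ is CM. No elimination between angles, no unit equations, no subsum casework.

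You almost state this in your Case~1 --- you mention $\Ci$, its field of definition, and the coefficients $A,B,C$ --- but the execution is garbled. Your sentence about ``a bidegree $(1,1)$ curve over a field $K$'' conflates $K$-points with $\Q$-points; the point is not whether $\Ci$ has many points over its field of definition (it always does, being genus $0$ with a point), but whether it has many $\Q$-points, and the Galois-intersection trick settles that unless $\Ci$ is already defined over $\Q$, i.e.\ unless $A,B,C\in\Q$. You then assert ``infinitely many rational angles means infinitely many roots of unity $\mu$ occur,'' which is exactly backward: the containment $\mu^2\in\Q(\tau,\overline\tau)$ caps the set of possible $\mu$ immediately, so infinitely many angles would have to pile up on a single $\mu$, and the Galois argument disposes of that. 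Your Case~2 and your ``cleanest uniform step'' reroute through the eliminants \eqref{equation:caso4:B}--\eqref{equation:caso222:B} and Theorem~\ref{thm:unit.equation}; that machinery is what the paper deploys in Sections~\ref{sec:minkowski}--\ref{sec:famiglie.parametriche} to bound the \emph{orders} of angles across \emph{varying} spaces, and the vanishing-subsum analysis there genuinely is long. For finiteness in a fixed non-CM space it is unnecessary, and you have not carried it out.
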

\begin{proof}
 Up to homothety, we can assume that $V=\generatedQ{1,\tau}$ for some $\tau\in\C\setminus\R$. We can also assume that $\tau$ is an algebraic number, otherwise, as remarked at the beginning of Section~\ref{section:3casi}, the space $V$ contains, up to equivalence, only one rational angle.  The number field $\Q(\tau,\overline{\tau})$ contains finitely many roots of unity; therefore it is enough to show that for every fixed root of unity $\mu\neq \pm 1$ such that $\mu^2\in \Q(\tau,\overline{\tau})$, the curve $\Ci\subseteq\P_1\times\P_1$ defined by equation~\eqref{eqn:1angle:seconda} has only finitely many rational points.
 
 With the notation of Section~\ref{section:equation:1angle}, if the three coefficients $A,B,C$ are not all rationals, then there exists a Galois automorphism $\sigma$ such that $\sigma(\Ci)\neq\Ci$. In this case, the set $\Ci(\Q)\subseteq \Ci\cap \sigma(\Ci)$ is contained in the intersection of two distinct irreducible curves of bidegree $(1,1)$ in $\P_1\times\P_1$ and therefore it  contains at most two elements.
 
 In the case that $A,B,C\in\Q$, the space $V$ is CM. In fact $A=\tau\overline{\tau}$ and $B+C=\tau+\overline{\tau}$, and this implies that $[\Q(\tau):\Q]=2$; by Theorem~\ref{thm:CM}~\eqref{thm:CM:i}, this is equivalent to $V$ being CM.
\end{proof}

We remark that the arguments of this proof allow one to bound the number of rational angles in $\generatedQ{1,\tau}$ in terms of $\abs{U\cap\Q(\tau,\overline{\tau})}$. We will see in Section~\ref{sec:minkowski} how this bound can be made independent of $\tau$.

We notice also that equation \eqref{equation:caso222:B}, in the case of a CM space not homothetic to $\Q(\sqrt{-1})$ or $\Q(\sqrt{-3})$ reduces to $-16(ab-cd)^2=0$, which defines a surface in $\P^3$.

\subsection{Spaces with \texorpdfstring{$V\sim_h \overline{V}$}{V hom V conj}}\label{sec:omotetici_al_simmetrico}
More in general, any reflection preserves angles so we can look at spaces which are stable by any reflection (not just the complex conjugation).
\begin{lemma}\label{lemma2}
 Let $V$ be a space. The following conditions are equivalent:
 \begin{enumerate}[(i)]
  \item\label{lemma2:i} $V\sim_h \overline{V}$
  \item\label{lemma2:ii} $V$ is homothetic to $\generatedQ{1,\tau}$ with $\abs{\tau}^2\in\Q$.
 \end{enumerate}
\end{lemma}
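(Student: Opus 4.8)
The plan is to prove both implications directly by explicit computation, exploiting the description of homotheties recalled in the Remark after Lemma in Section~\ref{sec:definizioni}, namely that $\generatedQ{1,\tau_1}\sim_h\generatedQ{1,\tau_2}$ iff $\tau_2=\frac{a\tau_1+b}{c\tau_1+d}$ for some $\left(\begin{smallmatrix}a&b\\c&d\end{smallmatrix}\right)\in\PGL_2(\Q)$, equivalently iff $1,\tau_1,\tau_2,\tau_1\tau_2$ are $\Q$-linearly dependent. The quantity $\abs{\tau}^2=\tau\overline\tau$ is exactly $A$ from Section~\ref{section:equation:1angle}, so condition \eqref{lemma2:ii} is really the statement ``$\tau\overline\tau\in\Q$''.

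For \eqref{lemma2:ii}$\Rightarrow$\eqref{lemma2:i}: suppose $V=\generatedQ{1,\tau}$ with $\tau\overline\tau=q\in\Q^*$. Then $\overline\tau=q/\tau$, so $\overline{V}=\generatedQ{1,\overline\tau}=\generatedQ{1,q/\tau}=\generatedQ{1,1/\tau}$ (scaling the generator $q/\tau$ by $1/q\in\Q^*$ does not change the $\Q$-span, and $1/\tau$ is obtained from $1/\tau$ by the same trick or simply note $\generatedQ{1,q/\tau}$ already contains $1$ and a rational multiple of $1/\tau$). But $1/\tau=\frac{0\cdot\tau+1}{1\cdot\tau+0}$ with $\left(\begin{smallmatrix}0&1\\1&0\end{smallmatrix}\right)\in\PGL_2(\Q)$, so $\generatedQ{1,1/\tau}\sim_h\generatedQ{1,\tau}=V$. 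Hence $\overline V\sim_h V$, which is \eqref{lemma2:i}.

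For \eqref{lemma2:i}$\Rightarrow$\eqref{lemma2:ii}: write $V=\generatedQ{1,\tau}$ (possible up to homothety, which changes neither side of the equivalence). The hypothesis $V\sim_h\overline V=\generatedQ{1,\overline\tau}$ gives, via the linear-dependence criterion, a nontrivial relation $\alpha+\beta\tau+\gamma\overline\tau+\delta\tau\overline\tau=0$ with $\alpha,\beta,\gamma,\delta\in\Q$. If $\delta=0$ then $\alpha+\beta\tau+\gamma\overline\tau=0$; applying complex conjugation and subtracting shows $(\beta-\gamma)(\tau-\overline\tau)=0$, and since $\tau\notin\R$ we get $\beta=\gamma$, whence $\alpha+\beta(\tau+\overline\tau)=0$, forcing $\beta\ne0$ (else $\alpha=0$ too, contradiction) and $\tau+\overline\tau=-\alpha/\beta\in\Q$; combined with... actually this case needs care — it says only $\Re\tau\in\Q$, not $\abs\tau^2\in\Q$ — so I will instead rule out $\delta=0$ by replacing the generator $\tau$ by $\tau':=\tau+t$ for a suitable $t\in\Q$, which keeps $V$ fixed and replaces the relation by one with leading coefficient still $\delta$ but the other coefficients shifted; the point is that genuinely $V\sim_h\overline V$ forces a relation with $\delta\ne0$ unless $V$ is of the special degenerate shape $\generatedQ{1,\tau}$ with $\Re\tau\in\Q$, in which case after translating by $-\Re\tau$ we may assume $\tau$ is purely imaginary and then $\abs\tau^2=-\tau^2$, and one checks $-\tau^2\in\Q$ must hold because $\overline V=\generatedQ{1,\tau}=V$ forces... hmm, purely imaginary $\tau$ gives $V=\overline V$ automatically regardless of $\tau^2$. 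So the relation must in fact have $\delta\ne0$: if every such relation had $\delta=0$, then $1,\tau,\overline\tau$ would be $\Q$-dependent while $1,\tau,\tau^2$ are not (as $\tau$ could be transcendental), and one sees directly this is incompatible with $V\sim_h\overline V$ unless $\tau\overline\tau\in\Q$ after a rational translation. Granting $\delta\ne0$: divide by $\delta$ to get $\tau\overline\tau=-(\alpha+\beta\tau+\gamma\overline\tau)/\delta$. Conjugating this identity gives $\tau\overline\tau=-(\alpha+\gamma\tau+\beta\overline\tau)/\delta$; subtracting, $(\beta-\gamma)(\tau-\overline\tau)=0$, so $\beta=\gamma$. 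Then $\tau\overline\tau=-(\alpha+\beta(\tau+\overline\tau))/\delta$, i.e. $\delta\abs\tau^2+\beta(\tau+\overline\tau)+\alpha=0$, a real relation; taking it together with its conjugate is vacuous, but now I use that $\abs\tau^2$ and $\tau+\overline\tau=2\Re\tau$ are the two elementary symmetric functions of $\tau,\overline\tau$, and a single $\Q$-linear relation between $1,\Re\tau,\abs\tau^2$ is not yet enough — I must also produce a second one. The clean fix: the matrix $M\in\PGL_2(\Q)$ realizing $\tau\mapsto\overline\tau$ satisfies $M^2\in\Q^*\cdot\mathrm{Id}$ (since applying conjugation twice is the identity on $V$), so $M$ is a rational involution in $\PGL_2$; by the same linear-algebra as in Lemma~\ref{lemma1}, $M$ is conjugate over $\Q$ to either $\mathrm{diag}(1,-1)$ or to a matrix with irrational eigenvalues. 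In the first case $V\sim_h\generatedQ{1,w_2/w_1}$ with $w_1\in\R,w_2\in i\R$, so $\abs{w_2/w_1}^2=-(w_2/w_1)^2\in\Q$ provided $(w_2/w_1)^2\in\Q$; but that is exactly the content of $M$ being rational here... and in the second case the fixed points of $M$ are a pair of conjugate quadratic irrationalities $\tau,\overline\tau$ with $\tau\overline\tau\in\Q$ by Vieta, which is \eqref{lemma2:ii} directly.

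The main obstacle is the bookkeeping in \eqref{lemma2:i}$\Rightarrow$\eqref{lemma2:ii}: a single linear relation $\alpha+\beta\tau+\gamma\overline\tau+\delta\tau\overline\tau=0$ does not obviously pin down $\abs\tau^2\in\Q$ when $\delta=0$, and one has to argue that the involutive structure of the realizing $\PGL_2(\Q)$-matrix (its square being scalar) forces the good case — essentially re-running the eigenvalue dichotomy of Lemma~\ref{lemma1} but now for a matrix whose square is an arbitrary rational scalar rather than $\mathrm{Id}$. Concretely: a rational $2\times2$ matrix $M$ with $M^2=\lambda\,\mathrm{Id}$, $\lambda\in\Q^*$, has either two rational eigenvalues $\pm\sqrt\lambda$ (when $\lambda$ is a rational square), in which case the fixed points of $[M]$ on $\P^1$ are rational and we are in the $\Re\tau\in\Q$ subcase — here one shows $\tau$ can be taken purely imaginary with $\tau^2\in\Q$ by examining the off-diagonal entries — or two conjugate quadratic irrational eigenvalues, whose eigenlines give $\tau,\overline\tau$ with rational product. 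I expect this eigenvalue analysis, together with keeping track of which generator one has normalised $V$ to, to be the only genuinely delicate point; everything else is the same kind of $\PGL_2(\Q)$-manipulation already used in the proof of Lemma~\ref{lemma1}.
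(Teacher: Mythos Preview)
Your argument for \eqref{lemma2:ii}$\Rightarrow$\eqref{lemma2:i} is correct and essentially the paper's (the paper multiplies by $\overline\tau$, you apply the equivalent $\PGL_2(\Q)$-inversion).

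Your plan for \eqref{lemma2:i}$\Rightarrow$\eqref{lemma2:ii}, however, contains a genuine error and an unresolved gap. In the ``irrational eigenvalue'' case you write that the eigenlines of $M$ ``give $\tau,\overline\tau$''. They do not: the eigenlines are the \emph{fixed} points of the M\"obius map $[M]$, whereas $M$ \emph{swaps} $\tau$ and $\overline\tau$ by construction, so $\tau$ is never a fixed point of $M$. (Concretely, one checks $M$ must have trace $0$, so its fixed points are the roots of $cz^2-2az-b=0$; these are real when $a^2+bc>0$ and are an unrelated complex-conjugate pair when $a^2+bc<0$.) So nothing in that branch yields $\tau\overline\tau\in\Q$. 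In the ``rational eigenvalue'' case you correctly reduce, after a $\PGL_2(\Q)$-conjugation, to $\tau'$ purely imaginary, but then assert one can arrange $\tau'^2\in\Q$; this is false (take $\tau'=i\pi$, for which $V=\overline V$ yet $\tau'^2=-\pi^2\notin\Q$). This branch can be rescued by invoking Lemma~\ref{lemma1}, which you do not do.

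The paper avoids this entire eigenvalue dichotomy with a two-line argument that bypasses the $\PGL_2$ formalism: if $V=\lambda\overline V$ then $V=\lambda\overline{\lambda\overline V}=\abs{\lambda}^2 V$, and since $V$ spans $\R^2$ this forces $\abs{\lambda}^2\in\Q$. Now for any $v\in V$ with $\lambda\overline v/v\notin\R$ one has $v,\lambda\overline v\in V$ linearly independent, so $V=\generatedQ{v,\lambda\overline v}\sim_h\generatedQ{1,\lambda\overline v/v}$, and $\abs{\lambda\overline v/v}^2=\abs{\lambda}^2\in\Q$. The point you are missing is that the homothety coefficient $\lambda$ itself carries the needed rationality, not the matrix acting on $\tau$.
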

\begin{proof}
 $\eqref{lemma2:i}\Rightarrow\eqref{lemma2:ii}$ We have that $V=\lambda \overline{V}$ for some $\lambda\in\C^*$ and that $V=\lambda \overline{V}=\lambda \overline{\lambda \overline{V}}=\abs{\lambda}^2 V$, so that $\abs{\lambda}^2\in\Q$. Let now $v\in V$ such that $\lambda\overline{v}/v\not\in\R$; such a $v$ exists because $V$ contains two $\R$-linearly independent vectors. Then we have that $V=\generatedQ{v,\lambda\overline{v}}\sim_h\generatedQ{1,\lambda\overline{v}/v}$ and $\abs{\lambda\frac{\overline{v}}{v}}^2=\abs{\lambda}^2\in\Q$.
 
 $\eqref{lemma2:ii}\Rightarrow\eqref{lemma2:i}$ Both properties are invariant by homothety, so it is enough to show that $\generatedQ{1,\tau}\sim_h \overline{\generatedQ{1,\tau}}$ when $\abs{\tau}^2\in\Q$. Indeed \[\overline{\tau}\generatedQ{1,\tau}=\generatedQ{\overline{\tau},\abs{\tau}^2}=\generatedQ{\overline{\tau},1}=\overline{\generatedQ{1,\tau}}.\qedhere\]
\end{proof}
We will see later in Section~\ref{sec:dodecagonali} some relevant nontrivial examples of spaces with this property.

We remark that if $V$ is obtained as $\Lambda\otimes\Q$ for a lattice $\Lambda=\generated{1,\tau}_\Z$, the condition that $V\sim_h\overline{V}$ is equivalent to saying that the elliptic curve $\C/\Lambda$ is isogenous to its complex conjugate.
\begin{example}
Consider a space $V=\generatedQ{1, \tau}$,  with a  transcendental $\tau$ such that $\abs{\tau}^2=2$. We will show that $V$, which obviously satisfies the conditions of Lemma~\ref{lemma2}, does not satisfy the conditions of Lemma~\ref{lemma1}. 

Suppose that $V$ is homothetic to a space $\generatedQ{1,\omega}$ with $\abs{\omega}=1$ (as in 
condition (i) of Lemma~\ref{lemma1}).

Then it follows that
     \[\tau=\frac{a+b\omega}{c+d\omega}\] with $a,b,c,d$ rationals and $\omega$ transcendental.
The conditions $\abs{\omega}=1$ and $\abs{x}^2=2$ easily imply, writing $t=\Re\omega$, that
\[2(c^2+2cdt+d^2)=a^2+2abt+b^2,\]
which implies $a^2+b^2=2(c^2+d^2)$  and $ab=2cd$.

But this yields $(a\pm b)^2= 2(c\pm d)^2$, and since 2 is not a square we find that $a=b=c=d=0$, which is impossible.
\end{example}

\section{Finiteness of angles in lattices outside special families}\label{sec:minkowski}

Let us consider the equations obtained in Section~\ref{section:3casi}. They are of the form
\begin{equation}\label{eq:mink:generale}
 \sum_{e\in I} C_e x^{e_1} y^{e_2} z^{e_3}=0,
\end{equation}
where $e=(e_1,e_2,e_3)\in I=\{-1,0,1\}^3$ and the $C_e$ are homogeneous polynomials of degree 4 (in the most general case \circled{2}+\circled{2}+\circled{2}) in the four variables $b_0,b_1,c_0,c_1$, and we seek solutions $x,y,z\in \radu\setminus\{1\}$ and  $b_0,b_1,c_0,c_1$ distinct nonzero integers (the cases \circled{4} and \circled{3}+\circled{2} are of the same form, only involving fewer terms, fewer variables, and with coefficients $C_i$ of smaller degree).

Let us fix a solution $x,y,z,b_0,b_1,c_0,c_1$ and let $N$ be the minimal common order of the roots of unity $x,y,z$. We will argue now about the factorization of $N$, showing that either $N$ is bounded by an absolute constant, or the solution belongs to a parametric family of solutions corresponding to a translate of an algebraic subgroup of $\Gemme^3$ contained inside the variety defined by equation \eqref{eq:mink:generale}.

\subsection{Odd primes appearing with exponent 1}
Let $p$ be an odd prime dividing $N$ exactly.
Let $\zeta$ be a primitive $p$-th root of unity, and let us write $x=\zeta^{v_1}\zeta_1,y=\zeta^{v_2}\zeta_2,z=\zeta^{v_3}\zeta_3$, with $\zeta_1,\zeta_2,\zeta_3$ roots of unity of order coprime with $p$.
If we denote by $(v,e)$ the scalar product of $v=(v_1,v_2,v_3)$ and $e=(e_1,e_2,e_3)$ as vectors of $\R^3$, we can write the monomial $x^{e_1}y^{e_2}z^{e_3}$ as $\zeta^{(e,v)}\xi_e$ with $\xi_e$ a root of unity of order prime with $p$, and the equation can be rewritten as
\begin{equation}\label{eq:mink:sing:due}
 0=\sum_{i=0}^{p-1} \zeta^i \sum_{(v,e)\equiv i \pmod{p}} C_e \xi_e .
\end{equation}
It follows that the quantities 
\begin{equation*}%
 \gamma_i=\sum_{(v,e)\equiv i \pmod{p}} C_e \xi_e
\end{equation*}
for $i=0,\dotsc,p-1$ must all be equal because the fields generated by $\zeta$ and by the $\xi_e$'s are linearly disjoint.

Equation \eqref{eq:mink:generale} has only 27 terms. This implies that, for all primes bigger than 27, the coefficients $\gamma_i$ are all equal to zero. We consider only this case.

Not all entries of $v$ are multiples of $p$, otherwise $N$ would not be the exact order of $x,y,z$. Therefore the lattice $\Gamma= v\Z + p\Z^3\subseteq \Z^3$ is a lattice of volume $\Vol(\Gamma)=p^{2}$.

By classical results in the geometry of numbers, namely the exact value of Hermite's constant\footnote{This result goes back to Gauss through the theory of arithmetical reduction of ternary quadratic forms; see \cite{casselsIntroGeoNumbers} for a thorough treatment.

We thank Davide Lombardo for suggesting the use of the exact value of Hermite's constant in place of Minkowski's theorem: this leads to a considerable numerical improvement.} in dimension three, we obtain that there exists a $w\in\Gamma\setminus\{0\}$ of norm bounded as $\abs{w}\leq \left(\sqrt{2} p^{2}\right)^{1/3}$.

Let us write
\begin{equation*}
 0\neq w=kv+pv'.
\end{equation*}
Let $e',e''$ be two vectors intervening in the same $\gamma_i$. Then we have that
\[
 (w,e'-e'')\equiv (v,e'-e'')\equiv 0 \pmod{p}
\]
So either $(w,e'-e'')=0$ or $\abs{(w,e'-e'')}\geq p$, and then
\[
p\leq \abs{(w,e'-e'')}\leq \abs{w}\abs{e'-e''} 
\]
Notice that either $\abs{e'-e''}\leq 3$, or $e'-e''=(\pm 2,\pm 2,\pm2)$, in which case we can divide by 2 and replace $\abs{e'-e''}$ with $\frac{\abs{e'-e''}}{2}=\sqrt{3}$.
In both cases we obtain
\begin{align*}
 p &\leq 3 \abs{w} \leq 3\left(\sqrt{2} p^2\right)^{1/3},\\
 p &\leq 27\sqrt{2}<39.
\end{align*}

Then if $p\geq 39$ we have that $(w,e')=(w,e'')$ for every $e',e''$ intervening in the same $\gamma_i$.
Let us now consider a new triple  $(xt^{w_1},yt^{w_2},zt^{w_3})$, where $t$ is an unknown. If we substitute in \eqref{eq:mink:generale} and collect the powers of $\zeta$ as in \eqref{eq:mink:sing:due} we obtain
\begin{align*}
 0&=\sum_{i=0}^{p-1} \zeta^i \sum_{(v,e)\equiv i \pmod{p}} C_e \xi_e t^{(w,e)}=\sum_{i=0}^{p-1} \zeta^i t^{f_i} \gamma_i
\end{align*}
and we can collect the powers of $t$ for some exponents $f_i$. But the $\gamma_i$'s are all zero, and so $(xt^{w_1},yt^{w_2},zt^{w_3})$ is a solution identically in $t$.

\subsection{Primes appearing with exponent at least 2}
Let us fix a solution $x,y,z,b_0,b_1,c_0,c_1$, let $N$ be the minimal common order of the roots of unity $x,y,z$ and let $p^m$ be a prime power dividing $N$ exactly.
Let $\zeta$ be a primitive $p^m$-th root of unity, and let us write $x=\zeta^{v_1}\zeta_1,y=\zeta^{v_2}\zeta_2,z=\zeta^{v_3}\zeta_3$, with $\zeta_1,\zeta_2,\zeta_3$ roots of unity of order coprime with $p$.
As before, we can write the monomial $x^{e_1}y^{e_2}z^{e_3}$ as $\zeta^{(e,v)}\xi_e$ with $\xi_e$ a root of unity of order prime with $p$, and the equation can be rewritten as
\begin{equation}\label{eq:mink:pow:due}
 0=\sum_{i=0}^{p^{m-1}-1} \zeta^i \sum_{(v,e)\equiv i \pmod{p^{m-1}}} C_e \xi_e \zeta^{(v,e)-i}.
\end{equation}
The exponents $(v,e)-i$ in the sums on the right are all multiples of $p^{m-1}$, so those powers of $\zeta$ are $p$-th roots of unity, but the  degree of $\zeta$ over the field generated by the $\xi_e$ and the $p$-th roots of unity is exactly $p^{m-1}$, which implies that
\begin{equation}\label{eq:mink:pow:uno}
 \gamma_i=\sum_{(v,e)\equiv i \pmod{p^{m-1}}} C_e \xi_e \zeta^{(v,e)-i}=0
\end{equation}
for all $i=0,\dotsc, p^{m-1}-1$.
From now on, we can argue as we did previously in the case of $\gamma_i=0$.

Not all entries of $v$ are multiples of $p$, otherwise $N$ would not be the exact order of $x,y,z$, therefore the lattice $\Gamma= v\Z + p^{m-1}\Z^3\subseteq \Z^3$ is a lattice of volume $\Vol(\Gamma)=p^{2m-2}$.
As before, there exists a $w\in\Gamma\setminus\{0\}$ of norm bounded as $\abs{w}\leq \left(\sqrt{2} p^{2m-2}\right)^{1/3}$.

Let us write
\begin{equation*}
 0\neq w=kv+p^{m-1}v'.
\end{equation*}

If in any of the sums \eqref{eq:mink:pow:uno} two different $e',e''$ appear, we have that
\[
 (w,e'-e'')\equiv (v,e'-e'')\equiv 0 \pmod{p^{m-1}}
\]
So either $(w,e'-e'')=0$ or 
\[
p^{m-1}\leq \abs{(w,e'-e'')}\leq \abs{w}\abs{e'-e''}. 
\]
If $p$ is odd we can argue again that the only case in which $e'-e''$ has norm greater than 3 is when it is equal to $(\pm 2,\pm 2,\pm 2)$, in which case it can be replaced by its half, and we obtain
\begin{align*}
p^{m-1}&\leq \abs{w}3 \leq \left(\sqrt{2} p^{2m-2}\right)^{1/3} 3\\
p^{m-1}&\leq 27\sqrt{2} <39.
\end{align*}
If $p=2$ we can only use that $\abs{e'-e''}\leq 2\sqrt{3}$, which gives 
\begin{align*}
2^{m-1}&\leq \abs{w}2\sqrt{3} \leq \left(\sqrt{2} p^{2m-2}\right)^{1/3}2\sqrt{3}\\
2^{m-1}&\leq 24\sqrt{6}<59\\
m &\leq 6.
\end{align*}

Therefore if $p^{m-1}\geq 39$ (or $m>6$ for $p=2$) we have that $(w,e')=(w,e'')$ for every $e',e''$ intervening in the sums \eqref{eq:mink:pow:uno}.
Let us now consider a new triple  $(xt^{w_1},yt^{w_2},zt^{w_3})$, where $t$ is an unknown. If we substitute in \eqref{eq:mink:pow:due} we obtain
\begin{align*}
 0&=\sum_{i=0}^{p^{m-1}-1} \zeta^i \sum_{(v,e)\equiv i \pmod{p^{m-1}}} C_e \xi_e t^{(w,e)} \zeta^{(v,e)-i}=\sum_{i=0}^{p^{m-1}-1} \zeta^i t^{f_i} \gamma_i
\end{align*}
and we can collect the powers of $t$ for some exponents $f_i$. But we argued before that the $\gamma_i$'s are all zero, and so $(xt^{w_1},yt^{w_2},zt^{w_3})$ is a solution identically in $t$.

We have thus shown that every solution of \eqref{eq:mink:generale} either belongs to a parametric family given by $(xt^{w_1},yt^{w_2},zt^{w_3})$, which represents the units in a translate of an algebraic subgroup of $\Gemme$, or the common order $N$ is a divisor of 
\[
 N_0=2^6 \cdot 3^4 \cdot 5^3 \cdot\left(\prod _{7\leq p\leq 37}p\right)^2.
\]
Combined with the proof of Theorem~\ref{thm:nonCM.angolifiniti} this implies that, outside the parametric families just mentioned, which will be described in the next section, the number of rational angles in non-CM spaces is at most $2N_0$.

\section{Parametric families of solutions}\label{sec:famiglie.parametriche}
In this section we study which translates of algebraic subgroups of $\Gemme^3$ are contained in the variety defined by \eqref{eq:mink:generale}. These are the parametric families of solutions which escape the analysis carried out in the previous section.

Suppose then that we have such a solution. This amounts to setting $(x(t) ,y(t),z(t))=(x' \cdot t^m,y'\cdot t^p, z' \cdot t^q)$, where $x',y',z'\in U$, $t$ is a parameter, $x,y,z$ are not all constant in $t$, so we may assume $m,p,q$ not all zero and  coprime.
\subsection{Case 2+2+2}
Let us assume for now that we are in the most general case \circled{2}+\circled{2}+\circled{2}, that is to say that $a,b,c,d\in\Q$ are all distinct and non-zero and $x,y,z\neq 1$; in this case we may assume $m,p,q\geq 0$ by replacing $t$ with $t^{-1}$ and possibly exchanging the role of $a,b$ or $c,d$. 
\subsubsection{If \texorpdfstring{$m,p,q$}{m,p,q} are all positive}
In this case the term in $xyz$ is the one of highest degree (as polynomial in $t$) among those appearing in \ref{table:caso222:coeff}, so if \eqref{eq:mink:generale} is satisfied identically in $t$ its coefficient must be equal to 0. However this coefficient is $-b (a - c) (b - d) d$, which is not zero.

\subsubsection{If \texorpdfstring{$m=0$}{m=0} and \texorpdfstring{$p,q$}{p,q} are positive and distinct}
In this case the terms $y^2z^2,xy^2z^2 ,x^2y^2z^{2}$ are those of highest degree. The leading term in $t$ is equal to $(a - c) (b - d) (-c + b x') (a - d x')$, which tells us that, for the equation to be satisfied identically in $t$, we must have $x(t)=x'=-1$ and either $b=-c$ or $a=-d$. If this holds, the term of highest degree is either the one in $y^2z$ or the one in $yz^2$, but setting either coefficient equal to zero implies that both $b=-c$ and $a=-d$ hold. Under this further assumption, the whole polynomial reduces to $4ab(a-b)^2(y-z)^2$, which can't be identically 0 in $t$ if $p$ and $q$ are distinct.

\subsubsection{If $m=0$ and $p=q=1$}
Arguing as before we see that, for the term of degree 4 to be zero, we must have $x=-1$ and one of  $b=-c$ or $a=-d$. The part of degree 3 is now given by the two terms in $y$ and $z$ together; the coefficient to be set equal to 0 is given by
\[
 2a(b+c)\left( (a+b)(a+c)y'-(a-b)(a-c)z'\right).
\]
If $b=-c$, then the term in degree three vanishes, and looking at the terms of lower degree we reach what is indeed a parametric solution of \eqref{eq:mink:generale}:
\begin{equation}\label{eqn:famiglia1}
 x=-1, y=z= t, a=-d,b=-c.
\end{equation}

If $b\neq -c$, then we have
\[
 (a+b)(a+c)y'-(a-b)(a-c)z'=0,
\]
so $y'/z'=\pm 1$.

Setting $y'=z'$ implies $b=-c$, which was discussed above, while $y'=-z'$ implies $a^2=-bc$, which in turn reduces the full equation to
\[
 -16ac(a-b)^2 y'^2 =0,
\]
which is not the case.

\subsubsection{If $m=1$ and $p=q=0$}
In this case, looking at the coefficient of the term in $t^2$ gives
\begin{equation*}
\begin{split}
  abcd(c-a + (b-c) y'  + (a-d) z' +(d - b) y' z' ) \times \\
  \left(\frac{1}{b}-\frac{1}{d} + 
   \left(\frac{1}{d}-\frac{1}{a}\right) y'  +  \left(\frac{1}{c}-\frac{1}{b}\right) z' + \left(\frac{1}{a}-\frac{1}{c}\right)  y' z' \right)=0,
\end{split}
\end{equation*}
where the second factor is equivalent to the first after exchanging every one of $a,b,c,d$ with the reciprocals of $b,a,d,c$ respectively.
The unit equation
\[ (c-a) + (b-c) y'  + (a-d) z' +(d - b) y' z' =0
\]
doesn't have any one-term subsum equal to zero. Two-term subsums equal to zero are easily excluded, except possibly if
\begin{equation*}
\begin{cases}
 (c-a)+(d-b)y'z'=0\\
 (b-c)y'+(a-d)z'=0
\end{cases}
\end{equation*}
hold.
The second equation implies that $y'=\pm z'$ and then in turn the first one gives $y'^2=\pm 1$ (the two signs being chosen independently). Of these 4 alternatives, three lead immediately to a contradiction, and we are left with the conditions $y'=z', y'^2=1$, which implies $a+b=c+d$ and $y=z=-1$. But under these conditions the full equation reduces to $16 (a - c)^2 (b - c)^2 x$, which cannot be equal to zero.

We are left then with solutions of a 4-term equation in units without subsums equal to zero. Therefore $y',z'$ must be sixth roots of unity, and by direct inspection one finds three solutions (up to Galois action and exchanging the role of $y',z'$). Each of these solutions, when plugged back into \eqref{eq:mink:generale}, gives a non-vanishing term in degree 1.

\bigskip

Since we are assuming to be in the most general case  \circled{2}+\circled{2}+\circled{2}, we can freely permute the angles $x(t),y(t),z(t)$ by changing the base of our lattice and obtaining a new equation of the shape \eqref{eq:mink:generale} which is also identically satisfied. Then it is easy to see that we can always reduce up to homothety to one of the four cases discussed above.

\subsection{Case 3+2}

The equation is preserved by the transformations
\begin{align*}
 b \leftrightarrow c,z\mapsto 1/z\\
 a\mapsto -a, b\mapsto b-a, c\mapsto c-a, x\leftrightarrow y\\
 a\mapsto 1/a,b\mapsto 1/b, c\mapsto 1/c, x\mapsto 1/x, y\mapsto y/x.
\end{align*}
This allows us to assume $m,p,q\geq 0$ and $m\geq p$.

\subsubsection{If $m,p,q>0$ and $m>p$}
In this case the term of highest degree is $x^2yz$, whose coefficient is $ac$, which is different from zero.

\subsubsection{If $m,p,q>0$ and $m=p$}
In this case the terms of highest degree are $x^2yz$ and $xy^2z$, and setting the coefficient equal to zero leads to
\[
 acx'+a(a-c)y'=0.
\]
This implies $x'=\pm y'$, but $a\neq 0$ so we must have $x=-y$ and $a=2c$.
With this substitution, equation \eqref{eq:mink:generale} becomes (after cancelling a factor $4cx$)
\[
 -cx+(b-c)x^2-(b-c)z+cxz=0.
\]
Let us assume first that $m\neq q$. In this case we see that the term of highest degree is either $x^2$ (if $m>q$) or $xz$ (if $m<q$). Their coefficients are $b-c$ and $c$ respectively, so in either case they are not zero.

If instead $m=q$ the coefficient to be set equal to zero is  $(b-c)x'+cz'$. This implies $b-c=\pm c$, which is impossible because in one case we get $b=0$ and in the other $b=2c=a$.

\subsubsection{If $m>p>0$ and $q=0$}
In this case the terms of highest degree are $x^2yz$ and $x^2y$, so we get $acz'-ab=0$, therefore $z=-1$ and $b=-c$.
Substituting back into \eqref{eq:mink:generale} gives
\[
 a^2x-b^2x^2+2(a^2-b^2)xy-b^2y^2+a^2xy^2=0.
\]
Now if $m\neq 2p$ the dominant term is either $x^2$ (if $m>2p$) or $xy^2$ (if $m<2p$), and their coefficients are $-b^2$ and $a^2$ respectively, which are nonvanishing.
If instead $m=2p$ the coefficient to be set equal to zero is  $-b^2x'+a^2y'^2=0$. This implies $a^2=\pm b^2$, which gives $a=\pm b$ (because $a,b\in\Q$), so $a=-b=c$, which is a contradiction.

\subsubsection{If $m=p=1$ and $q=0$}
The terms of highest degree in $t$ are $x^2yz,x^2y,y^2xz,y^2x$. Setting the coefficient equal to zero gives
\[
 cx'z'-bx'+(a-c)y'z'+(b-a)y'=0.
\]
This is a four-term unit equation. After normalising the equation dividing by $x'$, we need to study the vanishing sub-sums.
The only non-trivial case is given by $c=a-b$ and $y=xz$, which gives a vanishing subsum. Substituting back into \eqref{eq:mink:generale} gives a nonvanishing coefficient either in degree 2 or in degree 1.

A simple computer check finds nine nontrivial solutions among the sixth roots of unity, but none of them leads to solutions identically in $t$.

\subsubsection{If $m,q>0$ and $p=0$}
The terms of highest degree are $x^2yz$ and $x^2z$, so we get $a y'+(b-a)=0$, so $y=-1$ and $b=2a$. Substituting back into \eqref{eq:mink:generale} gives
\[
 -cx-(2a-c)z+(2a-c)x^2+cxz=0.
\]
Let us assume first that $m\neq q$. In this case we see that the term of highest degree is either $x^2$ (if $m>q$) or $xz$ (if $m<q$). Their coefficients are $2a-c$ and $c$ respectively, so in either case they are not zero, because $2a=b$.
If instead $m=q$ the coefficient to be set equal to zero is  $(2a-c)x'+cz'$. This implies $2a-c=\pm c$, which is impossible because in one case we get $a=0$ and in the other $a=c$.

\subsubsection{If $m=p=0$ and $q=1$}
In this case the coefficient of the term of degree one in $t$ factors as
\[
 \left(-a+cx+(c-a)y\right)\left((b-a)x-by+axy\right).
\]
It is enough to set the first factor equal to zero, as the second one is completely analogous (as seen by the transformation $x\mapsto 1/x,y\mapsto 1/y,b\leftrightarrow c$).
This leads to a three-term unit equation. It is readily seen that there are no vanishing subsums, and a simple computer check finds no nondegenerate solution.

\subsubsection{If $m=1$ and $p=q=0$}
In this case the terms of highest degree are $x^2,yx^2,zx^2,yzx^2$, so we get the unit equation
\[
 b(a-c)-aby-c(a-b)z+acyz=0.
\]
We need to study the vanishing sub-sums. The only non-trivial case is given by $c=ab/(b-a)$ and $y=z=-1$, which indeed gives a vanishing subsum. Substituting back into \eqref{eq:mink:generale} gives a nonvanishing coefficient either in degree 1.
A simple computer check finds nine nontrivial solutions among the sixth roots of unity, but none of them leads to solutions identically in $t$.

\subsection{Case 4}

If we are in the case \circled{4} we notice that the symmetries of the equation allow us to permute freely the angles $x,y,z$. Furthermore, up to homotheties, we can assume that the three exponents $m,p,q$ are non-negative, thanks to the substitution $x \mapsto x^{-1}, y \mapsto y x^{-1} , z \mapsto z x^{-1}, a \mapsto 1/a, b \mapsto 1/b$.

\subsubsection{$m=p=0,q=1$}
In this case, setting the coefficients of the terms of degree zero and one equal to zero we obtain the system of equations
\begin{equation*}
\begin{cases}
 (a-b)x'+by'-ax'y'=0\\
 -a+bx'+(a-b)y'=0
\end{cases}
\end{equation*}
If we add them and divide by $-a$ we get
\[(1-x')(1-y')=0,\]
which is a contradiction.

\subsubsection{If $0\leq m<p\leq q$}
In this case, the term with the highest degree is $yz$, whose coefficient is $a-b$, which does not vanish.

\subsubsection{If $0<m=p<q$}
In this case the terms of highest degree are $xz$ and $yz$, and setting the coefficient equal to 0 leads to 
\[
 bx'z'+(a-b)y'z'=0,
\]
which implies $b=a-b$ and $x=-y$.
Substituting back into \eqref{equation:caso4:B} gives
\[
 2b(x^2-z)=0
\]
and indeed setting
\begin{equation}\label{eqn:famiglia2}
 a=2b, x=t, y=-t, z=t^2
\end{equation}
gives a parametric solution of \eqref{eq:mink:generale}.

\subsubsection{$m=p=q=1$}
Setting the coefficients of the terms in $t$ and $t^2$ equal to zero leads to 
\begin{equation*}
\begin{cases}
 (a-b)x'+by'-az'=0\\
 -ax'y'+bx'z'+(a-b)y'z'=0;
\end{cases}
\end{equation*}
multiplying the first equation by $y'$ and adding them leads to
\[
 b(y'-x')(y'-z')=0,
\]
which would imply either $y=x$ or $y=z$; this is a contradiction.

\subsection{Infinite families}\label{sec:famiglie.descritte}
We have found the infinite families \eqref{eqn:famiglia1} and \eqref{eqn:famiglia2}. We can now understand them better.

The family \eqref{eqn:famiglia1} is of type \circled{2}+\circled{2}+\circled{2} whenever the parameter $t$ is a root of unity. Its feature is the presence of a right angle; in fact the spaces of this family all belong to the self-conjugated spaces studied in Section~\ref{sec:autoconiugati}. Up to homothety they form a family parametrized by a rational number $a\neq 0,\pm 1$ and a root of unity $y\neq 1$, with $\tau$ a purely imaginary root of
\[
 \tau^2+(a-1)\frac{y+1}{y-1}\tau-a=0.
\]

\def\Immtau{1.5}
\def\Imma{2.3}
\begin{center}
\begin{tikzpicture}[scale=1.5]
\coordinate (A) at (-2.5,0);
\coordinate (B) at (2.5,0);
\coordinate (C) at (0,-0.5);
\coordinate (D) at (0,2.5);
\coordinate (uno) at (1,0);
\node[anchor=90]at(uno){$1$};
\coordinate (tau) at (0,\Immtau);
\node[anchor=0]at(tau){$\tau$};
\coordinate (tau1) at ($(tau)+(1,0)$);
\node[anchor=270]at(tau1){$\tau+1$};
\coordinate (taua) at ($(tau)+(\Imma,0)$);
\node[anchor=270]at(taua){$\tau+a$};
\coordinate (tau-1) at ($(tau)-(1,0)$);
\node[anchor=270]at(tau-1){$\tau-1$};
\coordinate (tau-a) at ($(tau)-(\Imma,0)$);
\node[anchor=270]at(tau-a){$\tau-a$};
\coordinate (O) at (0,0);
\node[anchor=45]at(O){$O$};
\draw [->,thin, name path=ab] (A)--(B);
\draw [->,thin, name path=cd] (C)--(D);
\draw [->,thick,red, name path=ac] (O)--(tau);
\draw [->,thick,red, name path=ac] (O)--(uno);
\draw [->,thick,blue, name path=ac] (O)--(tau1);
\draw [->,thick,blue, name path=ac] (O)--(taua);
\draw [->,thick,green, name path=ac] (O)--(tau-1);
\draw [->,thick,green, name path=ac] (O)--(tau-a);
\end{tikzpicture}
\end{center}

If $a=0$ or $a=-1$, the type of the resulting space becomes \circled{4}, and we find the second parametric family \eqref{eqn:famiglia2}, which will be studied in full detail in the next section, as part of the complete description of all spaces of type \circled{4}.
Up to homothety, for a root of unity $y\neq 1$, $\tau$ is given by
\begin{align*}
\tau&=\frac{y+1}{y-1}.
\end{align*}

\def\Immtau{1.5}
\begin{center}
\begin{tikzpicture}[scale=1.5]
\coordinate (A) at (-2,0);
\coordinate (B) at (2,0);
\coordinate (C) at (0,-0.5);
\coordinate (D) at (0,2.5);
\coordinate (uno) at (1,0);
\node[anchor=90]at(uno){$1$};
\coordinate (tau) at (0,\Immtau);
\node[anchor=0]at(tau){$\tau$};
\coordinate (tau1) at ($(tau)+(1,0)$);
\node[anchor=270]at(tau1){$\tau+1$};
\coordinate (tau-1) at ($(tau)-(1,0)$);
\node[anchor=270]at(tau-1){$\tau-1$};
\coordinate (O) at (0,0);
\node[anchor=45]at(O){$O$};
\draw [->,thin, name path=ab] (A)--(B);
\draw [->,thin, name path=cd] (C)--(D);
\draw [->,thick,blue, name path=ac] (O)--(tau);
\draw [->,thick,blue, name path=ac] (O)--(uno);
\draw [thin, name path=cd] (uno)--(tau1);
\draw [thin, name path=ac] (tau)--(tau1);
\draw [->,thick,blue, name path=ac] (O)--(tau1);
\draw [->,thick,blue, name path=ac] (O)--(tau-1);
\draw [thin,dotted, name path=cd] (uno)--(tau);
\end{tikzpicture}
\end{center}

\begin{thm}\label{thm:finiti2}
 Every non-CM space has at most $2N_0+4$ different rational angles.
\end{thm}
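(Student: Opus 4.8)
The plan is to treat a non-CM space $V=\generatedQ{1,\tau}$ in three regimes, according to how rich its graph of rational angles is. Since three adjacent rational angles compose to a rational angle, the set of rational angles of $V$ is the edge set of a disjoint union of cliques on the (infinite) set $\P(V)$ of lines through the origin, a clique on $n$ vertices being a rational $n$‑tuple and contributing $\binom{n}{2}$ edges. Hence, unless $V$ is of one of the types \circled{4}, \circled{3}+\circled{2} or \circled{2}+\circled{2}+\circled{2}, its graph consists of at most two cliques, of sizes at most $3$ and not both of size $\ge 2$ unless both have size $2$, so $V$ has at most $3$ rational angles and there is nothing to prove. In the remaining cases I would quantify the proof of Theorem~\ref{thm:nonCM.angolifiniti}: every rational angle $(v_1,v_2)$ has a root of unity $\mu$ (well defined up to sign) with $\mu(v_2/v_1)\in\R$, and it is counted by a rational point of the curve $\Ci=\Ci_\mu$ of \S\ref{section:equation:1angle}; as $V$ is not CM the coefficients $A,B,C$ of $\Ci$ are not all rational, so $\Ci_\mu$ has at most two rational points. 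Letting $\mu^2$ run over $\radu\cap\Q(\tau,\overline{\tau})\setminus\{1\}$ this gives at most $2\bigl(|\radu\cap\Q(\tau,\overline{\tau})|-1\bigr)$ rational angles in $V$, so it suffices to bound $M:=|\radu\cap\Q(\tau,\overline{\tau})|$.

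\textbf{The main types, outside the families.}
Assume $V$ is of type \circled{4}, \circled{3}+\circled{2} or \circled{2}+\circled{2}+\circled{2} and is not homothetic to one of the families of \S\ref{sec:famiglie.descritte}. By the analysis of \S\ref{sec:minkowski} the roots of unity $x,y,z$ occurring in the associated equation \eqref{eq:mink:generale} have common order $N\mid N_0$. Using the explicit formulas that express $\tau$ in terms of $x,y,z$ — equation \eqref{eq:tupla:tre} in cases \circled{4} and \circled{3}+\circled{2}, and the formula displayed after \eqref{equation:caso222:B} in case \circled{2}+\circled{2}+\circled{2} when $ab\neq cd$ — together with \eqref{eq:tupla:uno}, resp.\ \eqref{eq:nangle}, for $\overline{\tau}$, one checks that $\tau,\overline{\tau}\in\Q(x,y,z)\subseteq\Q(\zeta_{N_0})$; in the degenerate subcase $ab=cd$ either $\tau=0$ (excluded) or the two quadratics for $\tau$ are proportional, and then $\tau$ is quadratic over a cyclotomic field of order bounded by the classification of four‑cosine relations of \cite{ConwayJones}, Theorem~7, that governs \eqref{equation:equazioni.proporzionali.2}. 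In every case $\Q(\tau,\overline{\tau})$ is contained in a cyclotomic field of order $\mid N_0$, and since $N_0$ is even this gives $M\le N_0$, hence at most $2N_0$ rational angles — which is the bound anticipated in the paragraph preceding the statement.

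\textbf{The families.}
It remains to treat $V$ homothetic to one of \eqref{eqn:famiglia1}, \eqref{eqn:famiglia2}. These are given explicitly: up to homothety $V=\generatedQ{1,\tau}$ with $\tau$ purely imaginary, a root of $\tau^2+(a-1)\tfrac{y+1}{y-1}\tau-a=0$ (family \eqref{eqn:famiglia1}; these are the self‑conjugate spaces of \S\ref{sec:autoconiugati}, so $\overline{\tau}=-\tau$ and $\Q(\tau,\overline{\tau})=\Q(\tau)$), or $V\sim_h\generatedQ{1,\zeta_b}$ for a root of unity $\zeta_b$ (family \eqref{eqn:famiglia2}). In both cases every line of $\P(V)$ has the form $[1]$ or $[\tau+a]$ with $a\in\Q^{\times}$, and one analyses the rational angles by hand: the family‑defining configuration is finite, and a direct computation shows that no further pair of lines forms a rational angle, so $V$ has only a bounded number of rational angles, at most $2N_0+4$ in total. (Family \eqref{eqn:famiglia2} will reappear in the exhaustive discussion of type \circled{4} in \S\ref{sec:4}.)

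\textbf{Main obstacle.}
The delicate point is exactly this last step, because a space $V=\generatedQ{1,\zeta_b}$ has $\Q(\tau,\overline{\tau})=\Q(\zeta_b)$, which contains of the order of $b$ roots of unity, so the curve‑counting argument above is by itself hopelessly weak there. What rescues the bound is that $\P(V)$ is a very thin subset of $\P\bigl(\Q(\zeta_b)\bigr)$: for $a\notin\{0,\pm1\}$ one has $\gcd\bigl(\Phi_b(-a),\,a^2-1\bigr)=1$, so the ideal $(\zeta_b+a)$ is prime to $(a^2-1)$ and therefore differs from its complex conjugate; hence $(\zeta_b+a)/(\zeta_b^{-1}+a)$ is a nontrivial fractional ideal and cannot be a unit times a root of unity, which rules out a rational angle between $[1]$ and $[\zeta_b+a]$. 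A similar coprimality computation excludes rational angles between $[\zeta_b+a_0]$ and $[\zeta_b+a_1]$ for distinct $a_0,a_1\notin\{0,\pm1\}$, leaving only the configuration $\{1,\zeta_b,\zeta_b+1,\zeta_b-1\}$. Making this uniform in $b$, and carrying out the analogous (slightly longer) bookkeeping — using the structure of the units of $\Q(\tau)$ — for family \eqref{eqn:famiglia1}, is the technical heart of the argument.
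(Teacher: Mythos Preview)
Your strategy---bound $M=|\radu\cap\Q(\tau,\overline\tau)|$ and then multiply by~$2$---is not the paper's, and the detour it forces through the parametric families is where the argument breaks down. The paper never bounds $M$: for a superrectangular space $\generatedQ{1,\zeta_b}$ you yourself note that $M$ is of order~$b$, so no uniform bound on $M$ exists. Instead of bounding the roots of unity in the ambient field, the paper bounds directly the roots of unity that actually occur as (squared) arguments of rational angles in~$V$. The key point is that if three rational angles have arguments of order not dividing $N_0$, one feeds them into the machinery of \S\ref{sec:minkowski}--\S\ref{sec:famiglie.parametriche}: either the resulting $(x,y,z)$ have common order dividing $N_0$ (contradiction), or the configuration lies in one of the families \eqref{eqn:famiglia1},~\eqref{eqn:famiglia2}, each of which forces one of the three arguments to equal~$-1$ (again a contradiction). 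Hence at most two arguments fail to have order dividing $N_0$, giving at most $N_0+2$ arguments in total and thus at most $2(N_0+2)$ angles. This three-line argument completely sidesteps any analysis of the family spaces themselves.

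By contrast, your plan requires establishing, for every space in family~\eqref{eqn:famiglia1} and every superrectangular space, that it has only boundedly many rational angles---essentially Theorem~\ref{thm:spazi_rettangolari} for~\eqref{eqn:famiglia2}, plus a new analogue for~\eqref{eqn:famiglia1} that the paper never states or proves. Your sketch for the superrectangular case via $\gcd(\Phi_b(-a),a^2-1)=1$ is not correct as written (this gcd can be divisible by~$2$, and in any case the ratio $(\zeta_b+a)/(\overline{\zeta_b}+a)$ always has absolute value~$1$, so a norm or ideal-coprimality argument does not by itself exclude it from~$\radu$); the paper handles this in \S\ref{section:5tuples.in.rectanglular} by a unit-equation argument instead. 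For family~\eqref{eqn:famiglia1} you offer only ``the structure of the units of $\Q(\tau)$'', but $\tau$ there satisfies $\tau^2+(a-1)\tfrac{y+1}{y-1}\tau-a=0$ over the cyclotomic field $\Q(y)$ of unbounded degree, so there is no uniform unit-group structure to invoke. Finally, in the degenerate subcase $ab=cd$ of \circled{2}+\circled{2}+\circled{2}, the bound you extract from Conway--Jones' four-cosine classification is not $N_0$, so even there your approach would give \emph{some} bound but not the stated $2N_0+4$.
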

\begin{proof}
Let us consider a space and assume that it has three nonequivalent  rational angles with arguments three roots of unity $x_1,x_2,x_3$ of order greater than $N_0$. Therefore, by the content of Section~\ref{sec:minkowski}, this space is homothetic to one of the families described above. However this is impossible, because it would imply that one of $x_1,x_2,x_3$ is equal to $-1$.
This shows that, in total, in a fixed space only $N_0+2$ different roots of unity can occur as arguments of a rational angle.
 It was already remarked in the proof of Theorem~\ref{thm:nonCM.angolifiniti} that, for a fixed root of unity $\mu$ and a fixed non-CM space, there are at most two nonequivalent rational angles of that argument, and this completes the proof.
\end{proof}

\section{Spaces with a rational 4-tuple}\label{sec:4}
\subsection{Rectangular and superrectangular spaces}
We saw before that the spaces which are invariant under complex conjugation can be characterized up to homothety as those generated by a $\tau$ of norm 1.
Among those spaces, the subfamily of those for which $\tau$ is a root of unity are especially relevant with respect to the rational angles.

\begin{lemma}\label{lemma:rect}
 Let $V$ be a space. The following conditions are equivalent:
 \begin{enumerate}[(i)]
  \item\label{lemma:rect:i} $V$ is homothetic to $\generatedQ{1,\tau}$ with $\tau\in \radu$
  \item\label{lemma:rect:ii} $V$ contains a rational triple with two perpendicular vectors
  \item\label{lemma:rect:iii} $V$ contains a rational $4$-tuple with two perpendicular vectors.
 \end{enumerate}
\end{lemma}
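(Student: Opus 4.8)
The plan is to prove the chain of equivalences $\eqref{lemma:rect:i}\Rightarrow\eqref{lemma:rect:iii}\Rightarrow\eqref{lemma:rect:ii}\Rightarrow\eqref{lemma:rect:i}$; the implication $\eqref{lemma:rect:iii}\Rightarrow\eqref{lemma:rect:ii}$ is trivial since any rational $4$-tuple contains a rational triple (dropping one vector preserves the rationality of all pairwise angles, and the two perpendicular vectors can be kept).

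For $\eqref{lemma:rect:i}\Rightarrow\eqref{lemma:rect:iii}$, assume $V=\generatedQ{1,\tau}$ with $\tau\in\radu$, say $\tau=\exp(2\pi i k/n)$. The idea is to exhibit an explicit rational $4$-tuple containing two perpendicular vectors. Since $\abs{\tau}=1$, the vectors $\tau+1$ and $\tau-1$ are perpendicular (this is exactly the computation in the proof of Lemma~\ref{lemma1}: $\frac{\tau-1}{\tau+1}$ is purely imaginary). Now I claim $\{1,\tau,\tau+1,\tau-1\}$ is a rational $4$-tuple: the angle $(1,\tau)$ has argument $\tau\in\radu$, hence rational; the angles $(1,\tau\pm 1)$ and $(\tau,\tau\pm 1)$ are rational because $\tau,\tau\pm1$ all have the same absolute value, so e.g. $(\tau+1)/1$ and $(\tau+1)/\tau$ are unit times real — more precisely one checks $\arg(\tau+1)=\pi k/n$ and $\arg(\tau-1)=\pi k/n+\pi/2$, both rational multiples of $\pi$, so every pairwise angle among $1,\tau,\tau+1,\tau-1$ is rational. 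Since $\tau+1$ and $\tau-1$ are perpendicular, this is the desired $4$-tuple (one must also check these four vectors are pairwise non-proportional, which holds as long as $\tau\neq\pm1$, i.e. the space is genuinely $2$-dimensional).

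For $\eqref{lemma:rect:ii}\Rightarrow\eqref{lemma:rect:i}$, suppose $V$ contains a rational triple $\{u,v,w\}$ with $u\perp v$. Applying a homothety, we may assume the triple is $\{1,\tau',\tau'+a\}$ in the notation of Section~\ref{section:equations:tupla}, after possibly relabelling so that one of the perpendicular pair is sent to $1$; since a homothety rescales the whole plane, the perpendicularity of the other member of the pair to $1$ is preserved, so one of the remaining two vectors is purely imaginary, say $\tau'$ is purely imaginary (the case where $\tau'+a$ is the purely imaginary one is handled by first translating). Then using equation~\eqref{eq:tupla:uno}, $x_1=(\tau'+a)/(\overline{\tau'}+a)=(\tau'+a)/(-\tau'+a)$ is a root of unity. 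But then equation~\eqref{eq:tupla:tre} expresses $\tau'$ as a specific rational function of $x_0,x_1$ and $a$; since $\tau'$ purely imaginary gives $\abs{\tau'}^2\in\Q$, and more to the point one can solve back to find that the resulting $\tau'$ has absolute value determined purely by roots of unity. Rather than compute directly, the cleaner route is: from $u\perp v$ we get, after homothety, $V\sim_h\generatedQ{1,it}$ with $t\in\R_{>0}$, and the third vector $1+it$ (rescaled) makes a rational angle with $1$, so $\arg(c+it)$ is a rational multiple of $\pi$ for some rational $c$; combined with $\arg(c+it)$ also being constrained, one deduces $t$ is such that $it$ has modulus a ratio forcing $(1+it)/(1-it)\in\radu$, hence $it = i\tan(\pi q)$ for rational $q$, and then $\generatedQ{1,it}=\generatedQ{\cos\pi q, i\sin\pi q}\sim_h\generatedQ{1,e^{i\pi q}\cdot(\text{real})}$... — more directly, $\generatedQ{1,it}\sim_h \generatedQ{1,\frac{1+it}{1-it}}$ and $\frac{1+it}{1-it}$ is a root of unity exactly when $\arg(1+it)\in\pi\Q$, which is what the rational triple condition gives. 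This last identity $\generatedQ{1,\tau}\sim_h\generatedQ{1,\frac{\tau-1}{\tau+1}}$ from Lemma~\ref{lemma1} is the key computational tool.

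The main obstacle I anticipate is bookkeeping in $\eqref{lemma:rect:ii}\Rightarrow\eqref{lemma:rect:i}$: one must correctly normalise the rational triple (which of the three vectors becomes $1$, ensuring the perpendicular partner stays purely imaginary), handle the degenerate possibilities where vectors coincide or the "triple" degenerates, and correctly invoke equation~\eqref{eq:tupla:uno} to see that a root of unity appears. The actual extraction that $\tau$ can be taken in $\radu$ then follows from the Möbius identity of Lemma~\ref{lemma1}, combined with the observation that $\frac{\tau-1}{\tau+1}\in i\R$ iff $\abs{\tau}=1$; so the real content is just organising these already-established facts in the right order.
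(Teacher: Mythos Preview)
Your approach matches the paper's: both hinge on Lemma~\ref{lemma1} and the explicit $4$-tuple $\{1,\tau,\tau+1,\tau-1\}$, with the paper working in the purely-imaginary model (proving \eqref{lemma:rect:ii}$\Rightarrow$\eqref{lemma:rect:iii} via $\tau-1=-(\overline{\tau+1})$) and you in the root-of-unity model. Two slips to clean up: the claim that ``$\tau,\tau\pm1$ all have the same absolute value'' is false (your subsequent explicit computation of $\arg(\tau\pm1)$ is correct and is what you should keep); and in \eqref{lemma:rect:ii}$\Rightarrow$\eqref{lemma:rect:i} the rational-triple hypothesis only gives $\arg(c+it)\in\pi\Q$ for some nonzero rational $c$, so before invoking the M\"obius identity you must either rescale $t\mapsto t/c$ (legitimate since $c\in\Q^*$ and $\generatedQ{1,it}=\generatedQ{1,it/c}$) or use $\frac{c+it}{c-it}$ in place of $\frac{1+it}{1-it}$.
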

\begin{proof}
 The equivalence between \eqref{lemma:rect:i} and \eqref{lemma:rect:ii} is proved as in Lemma~\ref{lemma1}, and \eqref{lemma:rect:iii}$\Rightarrow$\eqref{lemma:rect:ii} is trivial, so only \eqref{lemma:rect:ii}$\Rightarrow$\eqref{lemma:rect:iii} needs to be shown:
 
  After applying an homothety and  multiplying the vectors in the triple by suitable rational numbers we can assume that the rational triple is $1,\tau,\tau+1$ for some purely imaginary number $\tau$. We see immediately that the stability of $V$ under complex conjugation allows us to add the vector $\tau-1=-(\overline{\tau+1})$ to the triple and obtain a rational quadruple.
\end{proof}

We define a space as \emph{rectangular} if it contains a rational angle of $\pi/2$, and \emph{superrectangular} if it satisfies the conditions of Lemma~\ref{lemma:rect}.

By definition every superrectangular space satisfies the hypotheses of Lem\-ma~\ref{lemma1}, whose points \eqref{lemma1:i} and \eqref{lemma1:ii} provide two different parametrizations of superrectangular spaces.

Looking for a purely imaginary $\tau$, with the notation in Section~\ref{section:equations:tupla}, we have $\theta_0=i,a_1=1,a_2=-1,\theta_2=-1/\theta_1$ and equation~\eqref{eq:tupla:tre} gives
\begin{align*}
\tau&=\frac{x_1-1}{x_1+1}
\end{align*}
and the 4-tuple is given by $\left(1,\frac{\theta_1^2-1}{\theta_1^2+1},\frac{2\theta_1^2}{\theta_1^2+1},\frac{-2}{\theta_1^2+1}\right)$.

Choosing a root of unity as a generator instead, we get $\tau=\theta_0,a_1=1,a_2=-1,x_1=\theta_0,x_2=-\theta_0$ and the 4-tuple is $\left(1,\theta_0,\theta_0+1,\theta_0-1\right)$.

\subsubsection{Rational $5$-tuples in superrectangular spaces}\label{section:5tuples.in.rectanglular}
Let us determine when the rational $4$-tuple of a superrectangular space can be extended to a rational $5$-tuple.
Let $\tau=\theta_0$ be a root of unity, let $V=\generatedQ{1,\theta_0}$ and $(1,\theta_0,\theta_0+1,\theta_0-1,\theta_0+a_3)$ be a rational $5$-tuple, with $x_1=\theta_0, x_2=-\theta_0$, always with the notation of Section~\ref{section:equations:tupla}. Then equation~\eqref{eq:tupla:tre} gives
\begin{align*}
 \theta_0&=a_3 x_0\frac{x_3-1}{x_0-x_3}\\
 \theta_0^2-x_3&=a_3 \theta_0 x_3-a_3 \theta_0.
\end{align*}
so we get the unit equation
\[ \theta_0-\frac{x_3}{ \theta_0}-a_3 x_3 + a_3=0.\]
No subsum can vanish, because $a_3\neq 0,\pm 1$ and $x_3\neq 1$, therefore by Theorem~\ref{thm:unit.equation} the only solutions are to be found among sixth roots of unity, and this case has been already discussed in Section~\ref{section:CM}. This proves that the only superrectangular spaces whose $4$-tuple can be extended to a $5$-tuple are those homothetic to $\Q(\sqrt{-3})$.

\subsubsection{Additional angles in superrectangular spaces}
Let us now check which superrectangular spaces have additional rational angles that are not part of a rational $n$-tuple containing 1.

Using the same notation for the space $V$, we seek a rational angle $(\tau+b_0,\tau+b_1)$. Then equation~\eqref{eq:nangle:due} gives us
\begin{equation*}
 (1+b_0 b_1)+b_0 \theta_0 + \frac{b_1}{\theta_0} -(1+b_0 b_1)y_1 - b_1 \theta_0 y_1-b_0\frac{y_1}{\theta_0}=0,
\end{equation*}
with $b_0,b_1$ distinct rationals, different from $0,1,-1$.

We are again in the position of using Theorem~\ref{thm:unit.equation}. To cut the number of cases to check, we see that the equation doesn't change if we swap $b_0$ and $b_1$ and invert $\theta$.
We also see that if $y_1=-1$ then $\theta$ has degree $2$ and we get one of the two CM superrectangular spaces $\Q(i)$ and $\Q(\sqrt{-3})$, which were already discussed.

A computer search doesn't find any solution with  common order a divisor of 30 but not of 6 (if the common order is a divisor of 6 we are again in the case of $\Q(\sqrt{-3})$).
We are left with examining all possible subsums.

The only coefficient that might vanish is $1+b_0 b_1$, but its vanishing implies $x_0=\pm 1$.

Of the fifteen two-term subsums, seven directly imply that $\theta_0$ or $y_1$ are equal to $\pm 1$; two imply that $\theta_0\in\Q(i)$; the remaining six lead to $\theta_0\in\Q(\sqrt{-3})$. 

Of the ten pairs of vanishing 3-term subsums, nine directly imply that $\theta_0\in\Q(\sqrt{-3})$ and the last one that $\theta_0\in\Q(i)$.

Therefore we can sum up these computations in the following statement:
\begin{thm}\label{thm:spazi_rettangolari}
 A superrectangular space not homothetic to  $\Q(i)$ or $\Q(\sqrt{-3})$ has exact type \circled{4}, i.e. it has only one rational 4-uple up to equivalence and no other rational angle.
\end{thm}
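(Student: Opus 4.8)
The plan is to treat Theorem~\ref{thm:spazi_rettangolari} as a direct consequence of the two preceding subsections, which between them exhaust all ways in which a superrectangular space could acquire a rational angle beyond the canonical rational $4$-tuple $(1,\theta_0,\theta_0+1,\theta_0-1)$. Fix a superrectangular space $V$; by Lemma~\ref{lemma:rect} we may assume $V=\generatedQ{1,\theta_0}$ with $\theta_0\in\radu\setminus\{\pm1\}$, and its rational $4$-tuple is $(1,\theta_0,\theta_0+1,\theta_0-1)$. Suppose $V$ has a rational angle not equivalent to one already accounted for by this $4$-tuple. Any such angle, together with the existing rational vectors, generates a rational $n$-tuple; by Remark-type considerations (unions of overlapping rational tuples are rational tuples) this $n$-tuple either contains $1$ — hence extends the canonical $4$-tuple to a rational $5$-tuple — or it does not contain a vector proportional to $1$ or to $\theta_0$, hence is of the shape $(\theta_0+b_0,\theta_0+b_1)$ studied in \S\ref{section:equations:nangle}. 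So it suffices to show both possibilities force $V\sim_h\Q(i)$ or $V\sim_h\Q(\sqrt{-3})$.

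\emph{First case: extension to a rational $5$-tuple.} This is exactly \S\ref{section:5tuples.in.rectanglular}: equation~\eqref{eq:tupla:tre} for the new vector $\theta_0+a_3$ yields the four-term unit equation $\theta_0-\frac{x_3}{\theta_0}-a_3x_3+a_3=0$ with $a_3\in\Q\setminus\{0,\pm1\}$ and $x_3\in\radu\setminus\{1\}$. No proper subsum vanishes (a one-term subsum is impossible since all coefficients are nonzero; a two-term subsum forces $a_3=\pm1$ or $x_3=1$ or $\theta_0^2=1$, all excluded), so Theorem~\ref{thm:unit.equation} applies and the common order of $\theta_0^2,x_3$ is squarefree with $\sum_{p\mid Q}(p-2)\le 2$, giving $Q\mid 6$; by Theorem~\ref{thm:CM}~\eqref{thm:CM:i}--\eqref{thm:CM:iv} this puts $V\sim_h\Q(\sqrt{-3})$ (the case $\theta_0$ a primitive fourth root is excluded, as it does not support a $5$-tuple; or rather it lands in $\Q(i)$, already on our list).

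\emph{Second case: an extra angle $(\theta_0+b_0,\theta_0+b_1)$ with $b_0\ne b_1$, $b_0,b_1\in\Q\setminus\{0,\pm1\}$.} Here equation~\eqref{eq:nangle:due} specializes (with $x_0=\theta_0^2$, and after dividing through appropriately) to
\[
 (1+b_0b_1)+b_0\theta_0+\tfrac{b_1}{\theta_0}-(1+b_0b_1)y_1-b_1\theta_0 y_1-b_0\tfrac{y_1}{\theta_0}=0,
\]
a six-term unit equation. If $y_1=-1$ then $\theta_0$ is quadratic, so $V$ is CM superrectangular, i.e. $\Q(i)$ or $\Q(\sqrt{-3})$, already on the list. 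Otherwise one runs Theorem~\ref{thm:unit.equation}: a direct computer search shows no solution with common order of $\theta_0^2,y_1$ equal to a divisor of $30$ but not of $6$; and one then enumerates all proper vanishing subsums — the lone one-term subsum $1+b_0b_1=0$ forces $\theta_0^2=\pm1$; each of the fifteen two-term subsums either forces $\theta_0$ or $y_1$ to be $\pm1$, or forces $\theta_0\in\Q(i)$, or $\theta_0\in\Q(\sqrt{-3})$; each of the ten pairs of complementary three-term vanishing subsums forces $\theta_0\in\Q(\sqrt{-3})$ (nine of them) or $\theta_0\in\Q(i)$. In every surviving case $V\sim_h\Q(i)$ or $\Q(\sqrt{-3})$. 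This proves that a superrectangular $V$ not homothetic to $\Q(i)$ or $\Q(\sqrt{-3})$ has no rational angle outside its canonical $4$-tuple, so its exact type is \circled{4}.

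\textbf{Main obstacle.} The genuinely laborious point is the bookkeeping in the second case: showing that none of the squarefree orders allowed by Conway--Jones beyond $6$ actually occur (handled by the finite computer search over divisors of $30$), and then checking systematically that \emph{every} vanishing-subsum pattern of the six-term equation collapses to $\Q(i)$ or $\Q(\sqrt{-3})$ — a finite but substantial case analysis, made tractable only by the symmetry $b_0\leftrightarrow b_1$, $\theta_0\mapsto\theta_0^{-1}$ that halves the two-term cases. The first case, by contrast, is short and essentially formal once Theorem~\ref{thm:unit.equation} is invoked.
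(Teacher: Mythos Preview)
Your proposal is correct and follows essentially the same approach as the paper: the two cases you isolate are precisely the content of \S\ref{section:5tuples.in.rectanglular} and the subsequent subsection, and your application of Theorem~\ref{thm:unit.equation} together with the subsum case analysis matches the paper's argument. Two minor cosmetic points: in the first case the roots of unity whose common order is bounded are $\theta_0$ and $x_3$ (not $\theta_0^2$), and in the dichotomy you should exclude proportionality to $\theta_0\pm1$ as well as to $1$ and $\theta_0$ --- which you in fact do when you require $b_0,b_1\notin\{0,\pm1\}$, so the logic is sound even if the wording is slightly loose.
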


\subsection{The general case ``4''}
We have seen that every superrectangular spaces is of type \circled{4}. Let us now show that, with only finitely many exceptions, every space of type \circled{4} is a superrectangular space.

Let $V$ be a space with a rational 4-tuple given by $(1,\tau,\tau+a_1,\tau+a_2)$. By the computations in Section~\ref{section:equations:tupla} we have

\begin{align*}
\tau&=a_1x_0\frac{x_1-1}{x_0-x_1},\\
\tau&=a_2 x_0 \frac{x_2-1}{x_0-x_2}.
\end{align*}
eliminating $\tau$  we have
\begin{equation*}
  a_1(x_1-1)(x_0-x_2)=a_2(x_2-1)(x_0-x_1),
 \end{equation*}
 with $x_0,x_1,x_2\neq 1$ distinct roots of unity and $a_1,a_2\neq 0$ distinct rational numbers. In order to show that the space is superrectangular, it is enough to show that one of the $x_j$ or a ratio $x_j/x_k$ is equal to $-1$.
 
 We can rewrite this equation as
 \begin{equation}\label{eq:quadruple.razionali}
 (a_1-a_2)+a_2\frac{x_1}{x_0}-a_1\frac{x_2}{x_0}-a_1 x_1 +a_2 x_2+(a_1-a_2)\frac{x_1 x_2}{x_0}=0.
\end{equation}
We can now apply Theorem~\ref{thm:unit.equation} and conclude that either there is a vanishing subsum, or the common order of $x_0,x_1,x_2$ is a divisor of 30.

A computer search shows that there are no solutions with common order a divisor of 30 which do not belong to the family of superrectangular spaces. Therefore we are left with searching solutions with vanishing subsums.

\subsection{Vanishing subsums in \texorpdfstring{\eqref{eq:quadruple.razionali}}{I}}
If there is a vanishing subsum in \eqref{eq:quadruple.razionali}, then there is a vanishing subsum of minimal length at most three.

\subsubsection{One-term subsums}
If a vanishing subsum involves only one term, then the coefficient of the term must be 0, which is forbidden because $a_1,a_2$ are nonzero and distinct.

\subsubsection{Three-term subsums}
There are 20 three-term subsums, which get paired in 10 systems of two three-term linear equations. By direct examination, applying again Theorem \ref{thm:unit.equation}, one sees that any solution in roots of unity leads to a variable or ratio of two variables being equal to $\pm 1$, or to solutions where $x,y,z$ have common order a divisor of 6, which have been already discarded.

\subsubsection{Two-term subsums}
There are 15 two-term subsums. By direct inspection one checks that, after setting them equal to zero, 12 of them immediately imply that one variable or a ratio of two variables is equal to $\pm 1$. The remaining three, which are those obtained by pairing terms with the same coefficient, correspond the relations
\begin{align*}
 x_0& =-x_1 x_2 & x_1&=-x_0 x_2 & x_2&=-x_0 x_1.
\end{align*}
Each of these relations reduces \eqref{eq:quadruple.razionali} to a four-term equation. Precisely
\begin{align*}
 a_1 - a_1 x_1^2 + a_2 x_1 x_2 -a_2\frac{x_1}{x_2}&=0\\
 (a_1-a_2)-(a_1-a_2) x_1^2 + a_2 \frac{x_1}{x_0}-a_2 x_0 x_1 &=0\\
 (a_1-a_2) - (a_1-a_2) x_2^2 + a_1 x_0 x_2 -a_1\frac{x_2}{x_0}&=0.
\end{align*}
We can apply Theorem~\ref{thm:unit.equation} again, to find that solutions with vanishing subsums lead again to variables or ratios of variables being equal to $\pm 1$, while the solutions without vanishing subsums are found with $x_0^2,x_1^2,x_2^2$ of common order a divisor of 6, which implies that the common order of $x_0,x_1,x_2$ is a divisor of 12.

\subsection{Dodecagonal spaces}\label{sec:dodecagonali}
There are indeed solutions of common order 12.
Up to homotheties, exchanging the roles of the vectors in the rational $4$-tuple and acting with Galois automorphisms we find two spaces.

Let $\zeta=\exp(i \pi/6)$ be a primitive 12th root of unity.
The first space is given by
\begin{align}
\tau_1&=(i+1)\frac{\sqrt{3}-1}{2}=-\zeta^3+\zeta^2+\zeta-1  &\arg(\tau_1)&=\frac{1}{4}\pi\\
\tau_1+1&=(i+1)\frac{\sqrt{3}-i}{2}                                             &\arg(\tau_1+1)&=\frac{1}{12}\pi\\
\tau_1-1&=\frac{i-\sqrt{3}}{2}(\sqrt{3}-1)                        &\arg(\tau_1-1)&=\frac{5}{6}\pi,
\end{align}
as illustrated in Figure~\ref{fig:dodecagonali:tau1}.
The second is given by
\begin{align}
\tau_2&=(i-1)\frac{3-\sqrt{3}}{2}=\zeta^3-\zeta^2+\zeta-1  &\arg(\tau_2)&=\frac{3}{4}\pi\\
\tau_2+1&=\frac{1+i\sqrt{3}}{2}(\sqrt{3}-1)                         &\arg(\tau_2+1)&=\frac{1}{3}\pi\\
\tau_2+3&=\sqrt{3}(i-1)\frac{1+i\sqrt{3}}{2}                         &\arg(\tau_2+3)&=\frac{1}{12}\pi,
\end{align}
as illustrated in Figure~\ref{fig:dodecagonali:tau2}.
\begin{figure}
\begin{center}
\begin{tikzpicture}[scale=4]
\coordinate (O) at (0,0);

\node (pol) [draw, thick, blue!90!black,rotate=90,minimum size=8cm,regular polygon, regular polygon sides=12, rotate=195] at (0,0) {}; 
\coordinate (C) at (pol.corner 7);

\foreach \n [count=\nu from 0, remember=\n as \lastn, evaluate={\nu+\lastn}] in {1,2,...,12} 
\node[anchor=\n*(360/12)+180]at(pol.corner \n){$\zeta^{\nu}$};
\draw [thin, name path=d15] (pol.corner 1)--(pol.corner 5);
\draw [thin, name path=d27] (pol.corner 2)--(pol.corner 7);
\draw [thin, name path=d310] (pol.corner 3)--(pol.corner 10);
\draw [thin, name path=d412] (pol.corner 4)--(pol.corner 12);
\draw [thin, name path=d47] (pol.corner 4)--(pol.corner 7);
\draw [thin, name path=d612] (pol.corner 6)--(pol.corner 12);
\path [name intersections={of=d15 and d27,by=A}];
\path [name intersections={of=d47 and d612,by=B}];
\node [circle, fill=red, inner sep=0pt, minimum size=4pt,label=45:$\tau_1$] at (A) {};
\node [circle, fill=red, inner sep=0pt, minimum size=4pt, label=-90:$O$] at (O) {};
\draw [red, very thick] (O)--(A);
\draw [red, very thick] (A)--(B);
\draw [red, very thick] (B)--(C);
\draw [red, very thick] (C)--(O);

\end{tikzpicture}
\end{center}
\caption{}\label{fig:dodecagonali:tau1}
\end{figure}

\begin{figure}
\begin{center}
\begin{tikzpicture}[scale=4]
\coordinate (O) at (0,0);

\node (pol) [draw, thick, blue!90!black,rotate=90,minimum size=8cm,regular polygon, regular polygon sides=12, rotate=195] at (0,0) {}; 
\coordinate (C) at (pol.corner 7);
\foreach \n [count=\nu from 0, remember=\n as \lastn, evaluate={\nu+\lastn}] in {1,2,...,12} 
\node[anchor=\n*(360/12)+180]at(pol.corner \n){$\zeta^{\nu}$};
\draw [thin, name path=d14] (pol.corner 1)--(pol.corner 4);
\draw [thin, name path=d17] (pol.corner 1)--(pol.corner 7);
\draw [thin, name path=d27] (pol.corner 7)--($(pol.corner 2)!-1cm!(pol.corner 7)$);
\draw [thin, name path=d111] (pol.corner 11)--($(pol.corner 1)!-1cm!(pol.corner 11)$);
\path [name intersections={of=d27 and d111,by=B}];
\node [circle, fill=red, inner sep=0pt,minimum size=4pt, label=-75:$\tau_2+2$] at (B) {};
\draw [thin, name path=d39] (pol.corner 3)--(pol.corner 9);
\draw [thin, name path=d46] (pol.corner 4)--(pol.corner 6);
\draw [thin, name path=d57] (pol.corner 5)--(pol.corner 7);
\path [name intersections={of=d14 and d39,by=A}];
\path [name intersections={of=d46 and d57,by=tau2}];
\draw [thin] (tau2)--(B);
\draw [thin] (O)--(tau2);
\node [circle, fill=red, inner sep=0pt, minimum size=4pt] at (A) {};
\node [circle, fill=red, inner sep=0pt, minimum size=4pt, label=-90:$O$] at (O) {};
\node [circle, fill=red, inner sep=0pt, minimum size=4pt,label=-15:$\tau_2$] at (tau2) {};

\draw [red, very thick] (tau2)--(A);
\draw [red, very thick] (A)--(O);
\draw [red, very thick] (O)--(pol.corner 7);
\draw [red, very thick] (pol.corner 7)--(tau2);

\end{tikzpicture}
\end{center}
\caption{}\label{fig:dodecagonali:tau2}
\end{figure}
It is easy to check that $1,\tau_1,\tau_2$ and $\tau_1\tau_2$ are not $\Q$-linearly dependent, and thus that $V(\tau_1)$ is not homothetic to $V(\tau_2)$.

It is maybe more surprising that $\sigma(V(\tau_i))\sim_h V(\tau_i)$ for $i=1,2$ and every Galois automorphism $\sigma\in\Gal(\Q(\zeta)/\Q)$. 

In order to check this, let us fix $(1,\sqrt{3},i,i\sqrt{3})$ as a basis for $\Q(\zeta)/\Q$; in this basis, any Galois automorphism acts by exchanging the sign of some coordinates. Given two elements $v_1,v_2\in\Q(\zeta)$ and expressing in this basis the determinant of the $4\times 4$ matrix with column vectors $1,v_1,v_2,v_1 v_2$, one can check explicitly that it depends only on the squares of the coordinates of $v_1$ and $v_2$, and thus that it is invariant by the action of the Galois group. 

In particular this shows that the dodecagonal spaces satisfy the symmetry considered in Section~\ref{sec:omotetici_al_simmetrico}.

\subsubsection{Additional angles in dodecagonal spaces}\label{sec:altri_angoli_in_dodecagonali}
It is now quite easy to check that in these dodecagonal spaces the rational $4$-tuple that defines them cannot be extended to a rational $5$-tuple. 
With a little more (computational) work we also see that dodecagonal spaces do not contain any additional rational angle. Indeed it is enough to check which rational angles are contained in the spaces $\generatedQ{1,\tau_j}$ with $j=1,2$ and $\tau_j$ the ones defined in the previous section. 
By the usual \eqref{eq:nangle}, this amounts to finding all solutions of 
\[
 y(\tau_j+b_0)(\overline{\tau_j}+b_1)=(\overline{\tau_j}+b_0)(\tau_j+b_1)
\]
with $b_0,b_1\in\Q$ and distinct, and $y\in U$ different from 1. But we see immediately that $y$ lies in $\Q(\zeta_{12})$, the twelfth cyclotomic field, so $y=\zeta_{12}^{k}$ for $k=1,\dotsc,11$.
By writing both sides of the equation in terms of a basis, it is easy to see that the only solutions are found when $b_0,b_1\in\{0,1,-1\}$, for $j=1$, or $b_0,b_1\in\{0,1,3\}$, for $j=2$.

The computations of this section can be summarized in the following theorem:
\begin{thm}\label{thm:type4}
   Let $V$ be a space with a rational quadruple. Then $V$ is homothetic to either a superrectangular space or one of the 2 dodecagonal spaces.
\end{thm}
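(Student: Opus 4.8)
The plan is to treat the two implications separately. One direction is essentially already in hand: by Lemma~\ref{lemma:rect} a superrectangular space contains a rational $4$-tuple (indeed one with two perpendicular vectors), so it is of type \circled{4}, and the explicit parametrizations given right after that lemma exhibit the quadruple. So the content lies in the converse: a space $V$ carrying a rational quadruple must, up to homothety, be one of the superrectangular spaces or one of the two dodecagonal spaces. After a homothety and a rescaling of the vectors we present the quadruple as $(1,\tau,\tau+a_1,\tau+a_2)$, and eliminating $\tau$ between the two instances of \eqref{eq:tupla:tre} produces the six-term unit equation \eqref{eq:quadruple.razionali} in the squared arguments $x_0,x_1,x_2\in\radu\setminus\{1\}$, with $a_1\ne a_2$ rational and $a_1a_2\ne 0$. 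As remarked just before \eqref{eq:quadruple.razionali}, it then suffices to force one of the $x_j$, or one of the ratios $x_j/x_k$, to equal $-1$; in that situation an easy check shows $V$ is superrectangular.

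The first step is to feed \eqref{eq:quadruple.razionali} into Theorem~\ref{thm:unit.equation}. Either there is no vanishing proper subsum, in which case the common order of $x_0,x_1,x_2$ divides $30$, and a direct (finite, computer-assisted) enumeration of the relevant roots of unity together with the rational parameters turns up nothing outside the superrectangular family; or some proper subsum vanishes, and then, since the relation has six terms, there is a vanishing subsum of minimal length at most three (a vanishing subsum of length $\ge 4$ has a complementary vanishing subsum of length $\le 2$). The one-term case is impossible because $a_1,a_2$ are nonzero and distinct. For the three-term case, the twenty such subsums pair into ten systems of two linear relations among roots of unity; reapplying Theorem~\ref{thm:unit.equation} to each system, every solution either makes a variable or a ratio of two variables equal to $\pm1$, or has $x_0,x_1,x_2$ of common order dividing $6$, i.e.\ gives $\Q(\sqrt{-3})$, which is superrectangular.

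The delicate step is the two-term case. Of the fifteen two-term subsums, twelve yield at once a variable or a ratio equal to $\pm1$. The remaining three, obtained by equating the two terms that share a coefficient, give the relations $x_0=-x_1x_2$, $x_1=-x_0x_2$, $x_2=-x_0x_1$; substituting any one of them reduces \eqref{eq:quadruple.razionali} to a four-term unit equation. A final application of Theorem~\ref{thm:unit.equation} to each of these equations shows: a vanishing subsum again forces some variable or ratio to be $\pm1$, while the solutions with no vanishing subsum have $x_0^2,x_1^2,x_2^2$ of common order dividing $6$, hence $x_0,x_1,x_2$ of common order dividing $12$. The only genuinely new possibility is therefore common order exactly $12$, and here an explicit finite search over the twelfth roots of unity (with $a_1,a_2$ recovered from \eqref{eq:tupla:tre}) yields, after accounting for homothety, for permutations of the vectors in the quadruple, and for the Galois action, precisely the two spaces $V(\tau_1)$ and $V(\tau_2)$ displayed above; the $\Q$-linear independence of $1,\tau_i,\tau_j,\tau_i\tau_j$ confirms that these are new and mutually non-homothetic. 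Assembling all the cases proves the theorem. I expect the main obstacle to be exactly this organized subsum bookkeeping — checking that the twelve ``trivial'' two-term subsums really do all collapse to a $\pm1$, and carrying the order-$12$ search to completion — rather than any single hard idea.
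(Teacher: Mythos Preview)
Your proposal is correct and follows essentially the same route as the paper: normalize the quadruple, derive the six-term unit equation \eqref{eq:quadruple.razionali}, apply Conway--Jones (Theorem~\ref{thm:unit.equation}) to bound the common order or force a vanishing subsum, and then run exactly the one-term/three-term/two-term case split you describe, with the residual order-$12$ search producing the two dodecagonal spaces. The organization, the counts (ten three-term systems, twelve trivial and three nontrivial two-term subsums, the three four-term reductions), and the use of Galois/permutation symmetries to cut the order-$12$ list down to two classes all match the paper's argument.
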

The following table summarizes the classification obtained so far.

\begin{center}
\begin{tabular}{|c|p{8cm}|c|}
\hline
Rational angles                  & Description & Type                  \\ \hline
$\infty\circled{2}$                  & $V$ homothetic to an imaginary quadratic field different from $\Q(\sqrt{-1})$ or $\Q(\sqrt{-3})$.                      & CM and Rectangular                  \\ \hline
$\infty\circled{4}$                  & $V$ homothetic to $\Q(\sqrt{-1})$.                      & \multirow{2}{*}{CM and Superrectangular} \\ \cline{1-2}
$\infty\circled{6}$                  & $V$ homothetic to $\Q(\sqrt{-3})$.                      &                    \\ \hline
\multirow{2}{*}{\circled{4}} & $V$ a non-CM Superrectangular space.                      & Superrectangular                  \\ \cline{2-3} 
                   & $V$ homothetic to one of the dodecagonal spaces.                      & \multirow{3}{*}{} \\ \cline{1-2}
\circled{3}+\circled{2}                  & \multirow{2}{*}{Expected elliptic families and a finite list.}     &                    \\ \cline{1-1}
\circled{2}+\circled{2}+\circled{2}                  &                        &                    \\ \hline
\end{tabular}
\end{center}

\begin{rem}[An application to euclidean geometry]\label{rem:applicazione.euclidea}
 It is worth noting that, as a consequence of the classification of all spaces of type \circled{4} obtained in this section, it can be shown that the red parallelogram in Figure~\ref{fig:dodecagonali:tau1} is the only parallelogram with the properties that:
 \begin{enumerate}
  \item all angles determined by sides and diagonals are rational multiples of $\pi$,
  \item it is neither a rectangle nor a rhombus. 
 \end{enumerate}
 Quadrilaterals with property (1) have already been considered in the literature (see for example \cite{Rigby}), and are related to intersecting triples of diagonals in regular polygons, a topic fully analysed in \cite{PoonenRubinstein}.
 
 A related result that arises from the argument concerns rational products of tangents of rational angles, in a way similar but distinct from the study carried out in \cite{MyersonProductsSines}.
\end{rem}

\section{Example of spaces corresponding to rational points on a curve genus 1}\label{sec:esempio.genere.1}
As an example of the possible phenomena that can appear in the more general case \circled{2}+\circled{2}+\circled{2} we show here an infinite family of spaces parametrized by the rational points on an elliptic curve of rank one over $\Q$.

Setting $x=e^{i \pi/2}, y=e^{i \pi /4}, z=e^{i \pi 3/4}$ in equation \eqref{equation:caso222:B} we obtain 
\begin{multline}
\sqrt{2}(a c - b d) (a b - 2 b c + c d)=\\
-a^2 b^2+a^2 b c-a^2 b d-a^2 c d+a b^2 c+a b^2 d-3 a b c^2+\\
+6 a b c d-a b d^2+a c^2 d-a c d^2-3 b^2 c d+b c^2 d+b c d^2-c^2d^2.\\
\end{multline}
 Let us take one of the two factors multiplying $\sqrt{2}$ and set it to zero. Any rational solution of the system
 
 \begin{equation}\label{equation:fam.ellittica:sistema}
  \begin{cases}
   a d -2 b c + c d=0\\
   -a^2 b^2+a^2 b c-a^2 b d-a^2 c d+a b^2 c+a b^2 d-3 a b c^2+\\
   +6 a b c d-a b d^2+a c^2 d-a c d^2-3 b^2 c d+b c^2 d+b c d^2-c^2d^2=0
  \end{cases}
\end{equation}
with $a,b,c,d$ distinct and nonzero gives a solution to equation \eqref{equation:caso222:B}, and thus a space of type  \circled{2}+\circled{2}+\circled{2}.
The system \eqref{equation:fam.ellittica:sistema} defines a variety in $\P_3$, which is the union of the two lines $\{a=d=0\}$ and $\{b=c=0\}$ and an irreducible curve $\Ci$ of genus 1.
 
After eliminating $c$ and applying the transformation 
\[
 \begin{cases}
   a=b+u\\
   d=b+v
 \end{cases}
\]
this curve has equation
\begin{equation}\label{eq:fam.ellittica.forma.buv}
 2 b^2 u^2 + 2 b^2 v^2 + 4 b u v^2 - u^2 v^2 + v^4=0.
\end{equation}
It is possible to put this plane curve in Weierstrass form, sending at infinity the point $(b:u:v)=(0:1:1)$.
Under the transformation
\[
 \begin{cases}
  b=\dfrac{-4 - 2 X + 2 X^2 + X^3 + 4 Y + X Y - Y^2}{4 + 4 X + 2 X^2 - 4 Y - 
 2 X Y + Y^2}\\
 u=-\dfrac{-2 + Y}{2 + 2 X - Y}\\
 v=1
 \end{cases}
\]
we obtain the Weierstrass form
\[
 Y^2= X^3+4X^2+6X+ 4.
\]
This elliptic curve has $j$-invariant 128. It has a $2$-torsion point $(-2,0)$. The rank of the Mordell-Weil group is $1$, with generator $(-1,1)$.

This curve has infinitely many rational points, and each quadruple $(a,b,c,d)$ provides a value of $\tau$ such that the space $\generatedQ{1,\tau}$ is of type \circled{2}+\circled{2}+\circled{2}. Let us see now that the set of spaces so obtained is infinite also considering them up to equivalence.

The value of $\tau$, expressed in the original coordinates $a,b,c,d$, is given by
\[
 \tau=\frac{(1 - i) (a b - c d)}{\sqrt{2} (a - b - \sqrt{2} b - c + \sqrt{2} c + d)}.
\]
If $a,b,c,d\in\Q$ we have that $\tau\in\Q(i,\sqrt{2})$, and we can consider its Galois conjugates over $\Q$. Let us call these four values $\tau_1,\tau_2,\tau_3,\tau_4$ (corresponding, in the same order, to the identity and the Galois automorphisms fixing $\sqrt{2},i,i\sqrt{2}$).

Let us recall that the cross-ratio of four complex numbers $z_1,z_2,z_3,z_4$ is the rational function 
\[
 \rho(z_1,z_2,z_3,z_4)=\frac{(z_3-z_1)(z_4-z_2)}{(z_3-z_2)(z_4-z_1)}.
\]
For every Möbius transformation $\sigma\in\PGL_2(\C)$ we have
\[\rho(z_1,z_2,z_3,z_4)=\rho(\sigma(z_1),\sigma(z_2),\sigma(z_3),\sigma(z_4)).\]

If $\generatedQ{1,\tau}$ and $\generatedQ{1,\tau'}$ are homothetic spaces, we have that $\tau'=\sigma(\tau)$ for some $\sigma\in\PGL_2(\Q)$, and therefore $\rho(\tau_1,\tau_2,\tau_3,\tau_4)=\rho(\tau'_1,\tau'_2,\tau'_3,\tau'_4)$; similarly, if $\tau'=\overline{\tau}$, then $(\tau'_1,\tau'_2,\tau'_3,\tau'_4)=(\tau_2,\tau_1,\tau_4,\tau_3)$, and the cross-ratio is invariant under such a permutation of the variables.

Any $\sigma\in\PGL_2(\Q)$ commutes with Galois automorphisms, so we have that $\tau_2,\tau_3,\tau_4$ can be expressed as rational functions of $a,b,c,d$, and so can the cross-ratio $\rho(\tau_1,\tau_2,\tau_3,\tau_4)$. Computing its expression we obtain 
\[
 \phi(a,b,c,d)=\frac{2(a-b-c+d)^2}{a^2 - 2 a b + 3 b^2 - 2 a c - 2 b c + 3 c^2 + 2 a d - 2 b d -  2 c d + d^2}.
\]
This $\phi$ defines a rational function on the projective curve $\Ci$, and by what we argued above we see that, if $P,Q\in\Ci(\Q)$ are two rational points that give two equivalent spaces, then $\phi(P)=\phi(Q)$. Clearly the generic fibre of the function $\phi$ is finite, and this immediately proves our claim, that the rational points on $\Ci$ give rise to infinitely many pairwise non-equivalent spaces.

Notice that this happens because of the special shape of the function $\phi$; for a ``general'' rational function $f$, the equation $f(P)=f(Q)$ would define in $\Ci\times\Ci$ a reducible curve whose components other than the diagonal would normally have genus greater than one.  However for the function in question the equation $\phi(P_1)=\phi(P_2)$ cuts in $\Ci\times\Ci$ six irreducible curves of genus 1.

An even clearer picture of the geometry of this problem is obtained by putting $\Ci$ in the form \eqref{eq:fam.ellittica.forma.buv}. In the $(b:u:v)$ variables, the function $\phi$ is given by
\[
 \phi=\frac{2v^2}{2b^2+v^2}.
\]

\appendix
\section{The irreducibility of the surface defined by equation~\texorpdfstring{\eqref{equation:caso222:B}}{(3.15)}}
For fixed $x,y,z$, equation \eqref{equation:caso222:B} defined a surface $S\subseteq \P_3$, whose rational points correspond to lattices of type \circled{2}+\circled{2}+\circled{2} in which the three rational angles have fixed arguments; in this appendix we study more in depth its geometric properties.

A motivation for this study, other than its interest on its own, comes from the fact that we can reduce this problem of rational points on a surface to a problem of rational points on curves, arguing in a way similar to Section~\ref{section:equation:1angle}, with the important distinction that here the space is not fixed.

In fact, assume that $S$ is not defined over $\Q$. Then there is a Galois automorphism $\sigma$ such that $\sigma(S)\neq S$. If $S$ is irreducible, then all rational points on $S$ lie in $S\cap\sigma(S)$, which is a curve.

The aim of this appendix is to study the surface $S$, proving that $S$ is irreducible unless $x=y=z=-1$, and that $S$ is not defined over $\Q$ unless $x,y,z$ are either three fourth roots of unity or three sixth roots of unity.
\subsection{Elimination of two quadratic equations}\label{sec:eliminazione2}

In order to prove some geometrical properties of the variety defined by the equation \eqref{equation:caso222:B}, we study here in general the elimination of one variable from two quadratic equations.

Let 
\begin{equation*}%
\tau^2+A_1\tau+A_2=\tau^2+B_1\tau+B_2=0,
\end{equation*}
be two equations, where for the moment $\tau, A_1,A_2,B_1,B_2$ are elements of a field. In our application, these two equations will be of the form \ref{eq:nangle:due}.
On subtracting we obtain $(B_1-A_1)\tau=-(B_2-A_2)$, whence,  on  multiplying any of the equations by $(B_1-A_1)^2$ and substituting for $(B_1-A_1)\tau$, we obtain 
\begin{equation}\label{E.E}
\E:=(B_2-A_2)^2-A_1(B_1-A_1)(B_2-A_2)+A_2(B_1-A_1)^2=0.
\end{equation}
This is of course the resultant of the quadratic polynomials above. After some calculations we also  find
\begin{equation}\label{E.E2}
\E=B_2^2-A_1B_1B_2+(A_1^2-2A_2)B_2+A_2B_1^2-A_1A_2B_1+A_2^2.
\end{equation}

Let us now consider $A_1,A_2,B_1,B_2$ as independent variables over an algebraically closed field $\K$ of characteristic $0$,  giving the weight $i$ to $A_i,B_i$.  Note that the  expression \eqref{E.E2}  for $\E$ has decreasing weights in the variables $B_i$, whereas the whole expression is homogeneous of degree $4$ with respect to these weights. The total ordinary degree  in all the four variables is $3$ whereas the separate degrees in $A_1,A_2$ and $B_1,B_2$ are both equal to $2$.

\begin{prop}\label{P.irr} The polynomial $\E$ is irreducible over $\K$. More generally, it is irreducible as a polynomial in $B_2$ over any extension of $\K(A_1,A_2,B_1)$ not containing a square root of $A_1^2-4A_2$. 
\end{prop}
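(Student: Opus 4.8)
The plan is to regard $\E$ as a monic quadratic polynomial in $B_2$ alone, with coefficients in the field $F=\K(A_1,A_2,B_1)$ in the first instance and in an arbitrary extension $E\supseteq F$ in the second, and then to decide reducibility by the classical discriminant criterion. From \eqref{E.E2} one reads
\[
 \E=B_2^2+\bigl(A_1^2-2A_2-A_1B_1\bigr)B_2+\bigl(A_2B_1^2-A_1A_2B_1+A_2^2\bigr),
\]
so the discriminant of $\E$ with respect to $B_2$ is
\[
 \Delta=\bigl(A_1^2-2A_2-A_1B_1\bigr)^2-4\bigl(A_2B_1^2-A_1A_2B_1+A_2^2\bigr).
\]
Expanding and regrouping the terms as $A_1^2(A_1^2-4A_2)$, $-2A_1B_1(A_1^2-4A_2)$ and $B_1^2(A_1^2-4A_2)$, one finds the clean identity
\[
 \Delta=(A_1-B_1)^2\,(A_1^2-4A_2).
\]

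Since $\K$ has characteristic $0$ and $\E$ is monic of degree $2$ in $B_2$, it factors over a field $E\supseteq F$ precisely when $\Delta$ is a square in $E$. As $(A_1-B_1)^2$ is already a nonzero square in $F$, this happens exactly when $A_1^2-4A_2$ is a square in $E$; this is the second, more general assertion. For the first assertion it then suffices to check that $A_1^2-4A_2$ is not a square in $F$: it is an irreducible element of the UFD $\K[A_1,A_2]$ (being of degree one in $A_2$), hence remains irreducible — in particular a non-square — in $\K[A_1,A_2,B_1]$, and therefore also in its fraction field $F$. Consequently $\E$ is irreducible over $F$; being monic, hence primitive, as a polynomial in $B_2$ over the UFD $\K[A_1,A_2,B_1]$, it is irreducible in $\K[A_1,A_2,B_1,B_2]$ by Gauss's lemma.

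There is no real obstacle here: the only computational step is the factorization of $\Delta$, which is a short expansion, and everything else is the standard ``reducible iff discriminant a square'' criterion for quadratics together with Gauss's lemma. The one point to keep in mind is that this criterion must be applied over the possibly non-rational extension $E$ in the second assertion, which is legitimate because $\E$ remains a monic degree-$2$ polynomial in $B_2$ there, so that its reducibility over $E$ really is governed by whether $\Delta$ — equivalently $A_1^2-4A_2$ — becomes a square in $E$.
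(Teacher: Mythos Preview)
Your proof is correct and follows essentially the same approach as the paper: both view $\E$ as a monic quadratic in $B_2$ and reduce the question to whether $A_1^2-4A_2$ has a square root, the paper by writing down the roots explicitly (which amounts to your discriminant factorization $\Delta=(A_1-B_1)^2(A_1^2-4A_2)$) and you via the discriminant criterion. Your write-up is more explicit than the paper's in justifying that $A_1^2-4A_2$ is not a square in $\K(A_1,A_2,B_1)$ and in invoking Gauss's lemma to pass from irreducibility over the fraction field to irreducibility in $\K[A_1,A_2,B_1,B_2]$.
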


Of course there is a similar statement on replacing $B_i$ with $A_i$.

\begin{proof}  %

Let $R^2=A_1^2-4A_2$. Then from \eqref{E.E} we find that  the roots of $\E$ as a polynomial in $B_2$ are
\begin{equation*}%
\frac{A_1B_1-A_1^2+2A_2\pm R(B_1-A_1)}{2}.
\end{equation*}

This clearly yields what asserted. 
\end{proof}

Let now $x,y,z$ be fixed roots of unity, different from $1$. 

We view $A_1,A_2,B_1,B_2$ as variables on the affine $4$-space $\A^4$. Letting $a,b,c,d$ be new  variables, we define a regular map  $\pi:\A^4\to\A^4$ by
\begin{equation}\label{E.pi}
 \begin{aligned}
 A_1&=\left(\frac{y-x}{y-1}\right)a+\left(\frac{xy-1}{y-1}\right)b, & A_2&=x ab \\  
B_1&=\left(\frac{z-x}{z-1}\right)c+\left(\frac{xz-1}{z-1}\right)d, & B_2&=xcd.
\end{aligned}
 \end{equation}

This also corresponds to a ring homomorphism  $$\overline\Q[A_1,A_2,B_1,B_2]\subset \overline\Q[a,b,c,d].$$ 

In the sequel we shall view  $\E$ through this homomorphism as a polynomial $\E^*$ in $a,b,c,d$, with coefficients in $\Q(x,y,z)$; it is homogeneous of degree $4$.  
Similarly we think of $A_1,A_2,B_1,B_2$ as polynomials in $a,b,c,d$ as given by \eqref{E.pi}.
The equation $\E^*=0$ defines the surface $S$, while the equation $\E=0$ defines the surface $\pi(S)$.

{\bf Warning}: Here a  word of warning is needed since the  ring homomorphism is not  always injective.  It  is injective (i.e. the map $\pi$ is dominant, which in turn amounts to the algebraic independence of $A_1,A_2,B_1,B_2$ as given by \eqref{E.pi}) except when $x=-1$ and either $y$ or $z$ is $-1$.  In these cases we have $A_1B_1=0$ as a polynomial in $a,b,c,d$. We have $A_1=B_1=0$ precisely if $x=y=z=-1$.

For simplicity of notation we omit explicit reference to this fact  in what follows, which  should not create confusion. 

\medskip

Note that up to a factor in $\Q(x,y,z)$, $\E^*$ equals the polynomial $P$  defined by Table~\eqref{table:caso222:coeff}. %
More precisely $P=\frac{(y-1)^2(z-1)^2}{x}\E^*$.

\medskip

\begin{thm} For fixed $x,y,z$ the polynomial $\E^*$ is irreducible in $\overline\Q[a,b,c,d]$, unless $x=y=z=-1$. 
\end{thm}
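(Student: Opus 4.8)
The plan is to transfer, via the elimination already in place, the irreducibility of $\E^*$ over $\overline{\Q}[a,b,c,d]$ to the single statement that an explicit rational function is not a square in the function field of the plane curve $\{\E=0\}$ --- which is irreducible by Proposition~\ref{P.irr} --- and then to verify that statement by a short computation, after isolating the few values of $x,y,z$ for which $\pi$ fails to be dominant. First I would reduce to working over $F:=\overline{\Q}(a,b)$. The monomial $c^2d^2$ occurs in $\E^*$ (it comes from $B_2^2$) with the nonzero constant coefficient $x^2$, so $\E^*$ has no nonconstant factor lying in $\overline{\Q}[a,b]$; consequently a nontrivial factorisation of $\E^*$ over $\overline{\Q}[a,b,c,d]$ would already be one over $F[c,d]$, and it suffices to prove $\E^*$ irreducible in $F[c,d]$.

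Regard $A_1,A_2\in F$ as given by \eqref{E.pi}. Applying (the proof of) Proposition~\ref{P.irr} with these specialised values, the curve $C_0\subseteq\A^2_{B_1,B_2}$ defined by $\E(A_1,A_2,B_1,B_2)=0$ is irreducible over $F$, because $\Delta:=A_1^2-4A_2$ is a nonzero non-square in $F$: as a binary quadratic form in $a,b$ it is nondegenerate, its discriminant being a nonzero constant (namely $16xy(x-1)^2/(y-1)^2$), nonzero since $x\neq1$. The same reasoning gives $F(C_0)=F(B_1)(\sqrt{\Delta})$, with $B_2=\tfrac12\bigl(A_1B_1-(A_1^2-2A_2)\pm\sqrt{\Delta}\,(B_1-A_1)\bigr)$ on $C_0$.

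Next, $\E^*$ is the pull-back of $\E$ under $p\colon\A^2_{c,d}\to\A^2_{B_1,B_2}$, $(c,d)\mapsto(\gamma c+\delta d,\ xcd)$ with $\gamma=\tfrac{z-x}{z-1}$, $\delta=\tfrac{xz-1}{z-1}$, so $\{\E^*=0\}=p^{-1}(C_0)$. If exactly one of $\gamma,\delta$ vanishes --- that is, $z=x$ or $xz=1$ --- then $p$ is birational onto $\A^2$, an isomorphism over $\{B_1\neq0\}$; since $C_0\neq\{B_1=0\}$ and $\E^*$ does not vanish identically on $p^{-1}\{B_1=0\}$ (there $\E^*=A_2^2\neq0$), the pull-back $p^{-1}(C_0)$ is irreducible. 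If $\gamma=\delta=0$ --- that is, $x=z=-1$ --- then $B_1\equiv0$ and $\E^*$ is the quadratic $w^2-(A_1^2-2A_2)w+A_2^2$ in $w=cd$, whose discriminant equals $A_1^2\Delta$: a non-square in $F$ unless $A_1=0$, so $\E^*$ is irreducible except when $x=y=z=-1$ (equivalently $A_1=0$), where it equals $(cd-ab)^2$ --- the stated exception. In the remaining case $\gamma\delta\neq0$ the map $p$ is finite of degree $2$, and from the isomorphism $F[c,d]\cong\bigl(F[B_1,B_2]/(\E)\bigr)[c]\big/\bigl(x\gamma c^2-xB_1c+\delta B_2\bigr)$ one reads off that $F[c,d]/(\E^*)$ is a domain --- equivalently $\E^*$ is irreducible in $F[c,d]$ --- exactly when the discriminant $Q:=B_1^2-\tfrac{4\gamma\delta}{x}B_2$ of that quadratic in $c$ is not a square in $F(C_0)$.

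It then remains to run this last check. Substituting the expression for $B_2$ on $C_0$ turns $Q$ into a monic quadratic in $B_1$ over $L:=F(\sqrt{\Delta})$, and using the identity $A_1^2+\Delta=2(A_1^2-2A_2)$ its $B_1$-discriminant collapses to a nonzero multiple of $\bigl(\tfrac{\gamma\delta}{x}-1\bigr)\bigl((A_1^2-2A_2)+A_1\sqrt{\Delta}\bigr)$. Neither factor can vanish: the second, because $\sqrt{\Delta}\notin F$ would force $A_1=A_1^2-2A_2=0$, yet $A_1=0$ already makes $A_1^2-2A_2=-2xab\neq0$; the first, because $\tfrac{\gamma\delta}{x}=1$ is equivalent, on clearing denominators, to $z(x-1)^2=0$, impossible since $x\neq1$. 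Hence $Q$ is not a square in $F(C_0)$, and $\E^*$ is irreducible. The genuine obstacle is not the computation --- which is short and essentially forced --- but the bookkeeping of the degenerate values of $x,y,z$ at which $\pi$ is not dominant, precisely $x=-1$ together with $y=-1$ or $z=-1$ (the warning cases noted just before the statement); these have to be peeled off and settled individually, as above, before the clean argument that $Q$ is not a square on $C_0$ applies.
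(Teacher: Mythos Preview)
Your argument is correct and takes a somewhat different route from the paper's, though the two are closely related. Both proofs rest on Proposition~\ref{P.irr} (that is, on $A_1^2-4A_2$ not being a square), and both ultimately reduce to the same numerical check $\tfrac{\gamma\delta}{x}\ne 1$, which is exactly the paper's $\mu\ne 1$ since $\mu^{-1}=\gamma\delta/x$. The organisation, however, differs.

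The paper works with the full Klein four-group acting on $(a_1,b_1)\times(c_1,d_1)$: assuming a hypothetical factor $F$ of $\E^*$, it plays off $F$ against $F^\sigma$ for $\sigma$ the involution swapping $c_1,d_1$. If $F^\sigma=\pm F$ one reduces directly to Proposition~\ref{P.irr}; otherwise $FF^\sigma$ must equal $\E^*$, and specialising $c_1=d_1=u$ forces $\E(a_1,b_1,u,u)$ to be a square, which is ruled out by computing its $A_2$-discriminant $4u^2(1-\mu)$. Degenerate cases ($a_1b_1c_1d_1=0$) are then handled separately.

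You instead break the $(a,b)/(c,d)$ symmetry from the outset, first passing to $F=\overline{\Q}(a,b)$ (legitimate by the $c^2d^2$-coefficient), and then reading $p\colon(c,d)\mapsto(B_1,B_2)$ as a double cover of the irreducible curve $C_0=\{\E=0\}$; irreducibility of the pullback becomes the single statement that the branch discriminant $Q=B_1^2-\tfrac{4\gamma\delta}{x}B_2$ is not a square in $F(C_0)=L(B_1)$, verified by one $B_1$-discriminant computation. Your treatment is arguably cleaner in the generic case and absorbs the degenerations on the $(c,d)$-side uniformly (your cases $\gamma=0$ or $\delta=0$), while the paper's Galois viewpoint keeps the $(a,b)\leftrightarrow(c,d)$ symmetry visible throughout.
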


We note that for $x=y=z=-1$ the polynomial $\E^*$ factors as $(ab-cd)^2$.

\begin{proof}  Set

 \begin{equation*}%
 \begin{aligned}
 a_1&:=\left(\frac{y-x}{y-1}\right)a, & b_1&:=\left(\frac{xy-1}{y-1}\right)b,  \\  
c_1&:=\left(\frac{z-x}{z-1}\right)c, &  d_1&:= \left(\frac{xz-1}{z-1}\right)d.
\end{aligned}
\end{equation*}

Suppose, to start with, that we are in 

{\bf Case 1}: If $x$ is different from all among $y^{\pm 1},z^{\pm 1}$, which amounts to $a_1b_1c_1d_1\neq 0$.  

Then we have
\begin{equation*}
A_1=a_1+b_1,\quad B_1=c_1+d_1,\qquad A_2=\lambda a_1b_1,\quad B_2=\mu c_1d_1.
\end{equation*}
where for this proof we set 
\begin{align*}
\lambda&=\frac{x(y-1)^2}{(y-x)(xy-1)}, & \mu&=\frac{x(z-1)^2}{(z-x)(xz-1)}.
\end{align*}

Under the present assumption  the above ring homomorphism is injective and  yields  actually a   field extension $\overline\Q(a_1,b_1,c_1,d_1)/\overline\Q(A_1,A_2,B_1,B_2)$ which is  is finite  Galois with group $G$ isomorphic to  the four-group $\Z/(2)\times \Z/(2)$, acting trivially on constants and capital variables and acting on the lowercase variables by transpositions on $a_1,b_1$ and $c_1,d_1$.  

{\bf Remark}: This action gives a certain easy action on the original variables $a,b,c,d$ but we do not need to make this explicit. This may be relevant when studying the rational points $(a:b:c:d)$, since the present action is not defined over $\Q$ (thinking of $a,b,c,d$ as defined over $\Q$). 

\medskip

Suppose that $F\in\overline\Q[a_1,b_1,c_1,d_1]$ is a   nontrivial  irreducible factor of $\E^*$; we may assume it is homogeneous of degree $1$ or $2$. 

For $g\in G$, we have that $F^g:=F\circ g^{-1}$ is also a factor of $\E^*$, since the latter is invariant by $G$.  Let $\sigma\in G$ act by fixing $a_1,b_1$ and switching $c_1,d_1$.  The fixed field of $\sigma$ is $\overline\Q(a_1,b_1,B_1,B_2)$.  Note that $(c_1-d_1)^2$ is fixed by $\sigma$ (and equals $B_1^2-4\mu^{-1}B_2$), whereas $\sigma(c_1-d_1)=-(c_1-d_1)$.  Hence $\overline\Q(a_1,b_1,c_1,d_1)=\overline\Q(a_1,b_1,B_1,B_2)(c_1-d_1)$. 

\medskip

Suppose first that $F^\sigma=kF$ for a constant $k$; this should be necessarily $\pm 1$ since $\sigma^2=1$. If $k=-1$, then $F$ vanishes on putting $c_1=d_1$, whereas $\E^*$ does not (the term $B_2^2$ dominates). Hence $k=1$. But then $F$ is invariant by $\sigma$, thus lies in $\C(a_1,b_1,B_1,B_2)$, and then necessarily in $\C(a_1,b_1)[B_1,B_2]$ (e.g. since $c_1,d_1$ are integral over $\C[B_1,B_2]$ which is integrally closed). Then $F$ divides $\E^*$ in $\C(a_1,b_1)[B_1,B_2]$ and  by the above Proposition we deduce that $A_1^2-4A_2$ is a square in $\C(a_1,b_1,B_1) $ and hence in $\C(a_1,b_1)$. But $A_1^2-4A_2=(a_1+b_1)^2-4\lambda a_1b_1$, hence this latter possibility leads to $\lambda=0,1$. On the other hand $\lambda=0$ is excluded by the present hypothesis, whereas $\lambda=1$ easily leads to $x=1$, again excluded.

Therefore, since $F,F^\sigma$ are both irreducible, non proportional and divisors of $\E^*$,   $FF^\sigma$ is a divisor of $\E^*$; since it lies in $\C(a_1,b_1)[B_1,B_2]$ (being a norm) and  since $\E$ is irreducible in this ring (because as above $A_1^2-4A_2$ is not a square in $\C(a_1,b_1,B_1)$), $FF^\sigma$ must be a constant multiple of $\E$. Setting $c_1=d_1=u$ we have that $\E(a_1,b_1,u,u)$ is a square in $\C[a_1,b_1,u]$ (because it is a norm from a quadratic extension generated by $(c_1-d_1)$, or else because $F,F^\sigma$ become equal after such substitution).   
Since it lies in $\C[A_1,A_2,u]$  it is either a square in this ring, or vanishes on setting $a_1=b_1$; this last fact is excluded by direct computation. 
Indeed, since $B_1(u,u)=2u$, $B_2(u,u)=\mu u^2$, we have
\begin{equation*}
\E(a_1,b_1,u,u)=\mu^2u^4-2\mu A_1u^3+\mu (A_1^2-2A_2)u^2+4A_2u^2-2A_1A_2u+A_2^2.
\end{equation*}
On the other hand, from \eqref{E.E} we see that the discriminant with respect to $A_2$ is, up to a nonzero square, $B_1^2(u,u)-4B_2(u,u)=4u^2(1-\mu)$.
As above we have $\mu\neq 1$ hence this discriminant cannot vanish, hence the polynomial cannot be a square.

\medskip

This concludes the discussion of Case 1, and therefore from now on we may  deal with

{\bf Case 2}: If $a_1b_1c_1d_1=0$.

By symmetry we may assume say that $x=y$, i.e. $a_1=0$ and $A_1=(x+1)b$. 

Let us also assume first  that $x\neq z^{\pm 1}$. 

\medskip

If $A_1=0$ (i.e. $x=-1$) then equation \eqref{E.E} proves what we need unless $B_1=0$, since $ab$ is not a square.  If also $B_1=0$ we have $z=-1$ whence  $x=y=z=-1$.

Hence let us assume $A_1\neq 0$. 

By Proposition \ref{P.irr} applied with $A_i$ in place of $B_i$, if $\E^*$ is reducible over $\C(a,b,c,d)$ as a polynomial in $A_2$, then (by Gauss lemma) either is divisible by $b$ or  $B_1^2-4B_2$ is a square in $\C(b,c,d)$.  This last possibility is excluded as above since we are presently assuming that $x\neq z^{\pm 1}$ (so $c_1d_1$ are still defined).

On the other hand the expression \eqref{E.E2} shows that $\E^*$ is not a multiple of $b$.

\medskip

We are left with the case $x=y$ and $x=z$ or $x=z^{-1}$, which are symmetric, so assume $x=y=z\neq -1$. Then we have $B_1=(x+1)d$, $B_2=xcd$, and we directly can check the irreducibility of $\E^*$, e.g. checking it is irreducible in $c$, since it is quadratic in this variable and with  discriminant a constant multiple of $c^2(B_1-A_1)^2(A_1^2-4A_2)=(x+1)^2c^2(d-b)^2((x+1)^2b^2-4xab)$, not a square. 
\end{proof}

We remark that the irreducibility of the surface $S$ could also be used to argue for the finiteness of the rational angles in a fixed non-CM lattice, but the proof of Theorem~\ref{thm:finiti2} gives an explicit bound.

\begin{rem}
 We remark that the varieties $S$ and $\pi(S)$ are rational. Indeed one can set $Z_1=B_1-A_1$ and $Z_2=B_2-A_2$, and the equation \eqref{E.E} becomes $Z_2^2-A_1Z_1Z_2+A_2Z_1^2$, which is linear in $A_1,A_2$; this shows that $\pi(S)$ is rational. As for the variety $S$, one can see that $R^2=A_1^2-4A_2$, through the substitutions \eqref{E.pi}, gives rise to a ternary quadratic form $Q(a,b,R)=0$. After dehomogenizing with respect to, say, $a$ we can then parametrize rationally $b,R$ in terms of a parameter $\alpha$. Through \eqref{E.E} we can also express $R$ in terms of $c,d$, which leads to an equation which is linear in both $c,d$, thus allowing, for example, to express $d$ as a rational function of $\alpha$ and $c$.
 
 In any case the fact that these varieties are rational is not relevant to our arguments.
\end{rem}

\subsection{The field of definition of \texorpdfstring{$S$}{S}}\label{section:appendix:def.over.Q}
We observed that, when $S$ is not defined over $\Q$, its rational points lie in the intersection of the conjugates of $S$, and therefore on a union of finitely many varieties of dimension at most 1. With the following proposition we show that the abundance of rational angles on CM spaces is instead explained by the fact that they correspond to angles for which the surface $S$ is defined over $\Q$.
\begin{prop}
 The surface $S$ is defined over $\Q$ if and only if either $x^4=y^4=z^4=1$ or $x^6=y^6=z^6=1$.
\end{prop}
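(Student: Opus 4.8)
Recall that $S=\{P=0\}\subset\P_3$, where by Table~\eqref{table:caso222:coeff} the polynomial $P=P(x,y,z)$ has the form $\sum_k C_k(a,b,c,d)\,\nu_k$, each $C_k$ being a quartic form with rational coefficients and each $\nu_k\in\Q(x,y,z)$ either a monomial in $x,y,z$ of degree $\leq2$ in each variable or a sum of two such; recall also that $P=\tfrac{(y-1)^2(z-1)^2}{x}\,\E^*$. By the theorem just proved $S$ is irreducible unless $x=y=z=-1$, and in that exceptional case $P$ is a nonzero rational multiple of $(ab-cd)^2$, so $S$ is defined over $\Q$ while $(-1)^4=1$; thus we may assume henceforth that $S$ is irreducible, in particular $(x,y,z)\neq(-1,-1,-1)$. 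Let $N$ be the least common multiple of the orders of $x,y,z$, so the coefficients of $P$ lie in $\Q(\zeta_N)$, and note that $x^4=y^4=z^4=1$ (resp.\ $x^6=y^6=z^6=1$) is the same as $N\mid4$ (resp.\ $N\mid6$). Since an irreducible hypersurface over $\Q(\zeta_N)$ is defined over $\Q$ exactly when it is stable under $\Gal(\Q(\zeta_N)/\Q)$, and since $\sigma_t\in\Gal(\Q(\zeta_N)/\Q)$ sends $x,y,z$ to $x^t,y^t,z^t$, we must prove that $\sigma_t(P)=P(x^t,y^t,z^t)$ is a scalar multiple of $P$ for all $t\in(\Z/N\Z)^*$ if and only if $N\mid4$ or $N\mid6$.

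\textbf{Sufficiency.}
The two monomials occurring in any one row of Table~\eqref{table:caso222:coeff} multiply to $(xyz)^2$, and the middle monomial $xyz$ is its own partner; consequently
\[
 P(x^{-1},y^{-1},z^{-1})=(xyz)^{-2}P(x,y,z).
\]
If $N\mid4$ or $N\mid6$ then $\Q(\zeta_N)$ is $\Q$, $\Q(i)$ or $\Q(\sqrt{-3})$, so $\Gal(\Q(\zeta_N)/\Q)$ is trivial or generated by complex conjugation $c$; and since $c$ inverts every root of unity, $c(P)=P(x^{-1},y^{-1},z^{-1})=(xyz)^{-2}P$, a constant multiple of $P$. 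Hence $c(S)=S$ and, by Galois descent, $S$ is defined over $\Q$.

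\textbf{Necessity.}
Assume $S$ is defined over $\Q$; then every ratio of two nonzero coefficients of $P$ is fixed by all $\sigma_t$ and so lies in $\Q$. Using $P=\tfrac{(y-1)^2(z-1)^2}{x}\E^*$ and the substitutions \eqref{E.pi}, the coefficients in $P$ of the monomials $a^2b^2,\ a^2cd,\ b^2cd,\ a^2bc,\ a^2bd$ are, respectively,
\[
 x(y-1)^2(z-1)^2,\qquad (y-x)^2(z-1)^2,\qquad (xy-1)^2(z-1)^2,
\]
\[
 -(y-x)(z-x)(y-1)(z-1),\qquad -(y-x)(xz-1)(y-1)(z-1)
\]
(each checked against the table), and the symmetry $(a,b,y)\leftrightarrow(c,d,z)$ of $\E^*$, together with the substitution in the Remark after Table~\eqref{table:caso222:coeff} that permutes the three angles, gives the analogous coefficients with the roles of $x,y,z$ interchanged. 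Writing $x=e^{i\theta},y=e^{i\phi},z=e^{i\psi}$, the conditions that the coefficient of $b^2cd$ over that of $a^2b^2$, and the coefficient of $a^2cd$ over that of $a^2b^2$, lie in $\Q$ become
\[
 \frac{1-\cos(\theta+\phi)}{1-\cos\phi}\in\Q,\qquad \frac{1-\cos(\theta-\phi)}{1-\cos\phi}\in\Q;
\]
their sum gives $\tfrac{1-\cos\theta\cos\phi}{1-\cos\phi}\in\Q$, and combining with the same relation in which $y$ is the distinguished angle gives $\tfrac{1-\cos\theta}{1-\cos\phi}\in\Q$; eliminating $\cos\theta$ between these two forces $\cos\phi\in\Q$. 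By Niven's theorem — equivalently, by Theorem~\ref{thm:unit.equation} applied to $y+y^{-1}-2\cos\phi=0$ — this gives $y^4=1$ or $y^6=1$; then also $\cos\theta\in\Q$, hence $x^4=1$ or $x^6=1$, and symmetrically $z^4=1$ or $z^6=1$. Finally, the coefficient of $a^2bc$ over that of $a^2bd$ equals $\tfrac{z-x}{xz-1}=\tfrac{\sin((\psi-\theta)/2)}{\sin((\psi+\theta)/2)}$, which must lie in $\Q$, and likewise for the analogous ratios relating the other two pairs of angles; evaluating these on the finitely many triples remaining after the previous step shows that whenever one of $x,y,z$ is a primitive fourth root of unity and another is a primitive third or sixth root of unity the value is of the form $\pm\tan\tfrac{\pi}{12}=\pm(2-\sqrt3)\notin\Q$. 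Such mixed triples are therefore impossible, so $x^4=y^4=z^4=1$ or $x^6=y^6=z^6=1$.

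\textbf{Main obstacle.}
The one genuinely delicate step is the last one, excluding triples that mix fourth roots of unity with third or sixth roots — this is precisely what separates the sharp conclusion ``all fourth or all sixth'' from a weaker statement — and organizing the finite case check (or, alternatively, the vanishing-subsum analysis for Theorem~\ref{thm:unit.equation}) cleanly. The remainder is bookkeeping: choosing enough monomials of $P$ to pin down $x,y,z$ and verifying the closed forms displayed above against Table~\eqref{table:caso222:coeff}.
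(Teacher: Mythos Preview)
Your argument is essentially correct and takes a route different from the paper's.

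For sufficiency the paper says nothing explicit; your observation that the row-pairing in Table~\eqref{table:caso222:coeff} gives $P(x^{-1},y^{-1},z^{-1})=(xyz)^{-2}P(x,y,z)$, whence complex conjugation fixes $S$ and Galois descent applies when $[\Q(\zeta_N):\Q]\le2$, is the cleanest way to handle that direction and is a genuine addition.

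For necessity the paper uses only the four monomials $a^2bc,a^2bd,ab^2c,ab^2d$. Their pairwise ratios are, up to sign, $\tfrac{x-z}{xz-1}$ and $\tfrac{x-y}{xy-1}$, and rationality of each is read off directly as a nontrivial $\Q$-linear relation among $1,x,z,xz$ (resp.\ $1,x,y,xy$); Conway--Jones (Theorem~\ref{thm:unit.equation}) is then applied to these four-term unit equations, and the special case $x=y=-1$ is handled by a separate substitution. Your approach instead passes through real quantities: five monomials give the ratios $\tfrac{1-\cos(\theta\pm\phi)}{1-\cos\phi}$, you use the angle-permuting substitution of the Remark after the table to interchange the roles of $\theta$ and $\phi$, deduce $\cos\phi\in\Q$, apply Niven to each angle separately, and then run a finite check to exclude mixing primitive fourth with primitive third or sixth roots. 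This trades a single four-term unit-equation analysis for the more elementary degree-two fact behind Niven, at the price of an extra step and more bookkeeping with the three-fold symmetry.

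Two small points to tighten. First, the coefficients of $a^2bc$ and $a^2bd$ both carry the factor $(y-x)$, so your ratio $\tfrac{z-x}{xz-1}$ is formally $0/0$ when $y=x$; you should note that in that case one switches to $ab^2c,ab^2d$ (which carry $(xy-1)$ instead), or that when two of $x,y,z$ coincide there is nothing to ``mix''. Second, the step producing $\tfrac{1-\cos\theta}{1-\cos\phi}\in\Q$ implicitly divides by $1-\cos\theta\cos\phi$, which vanishes exactly when $x=y=-1$; this case should be set aside (and is harmless, since then $\cos\theta=\cos\phi=-1\in\Q$ already). Neither point affects the soundness of the strategy.
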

\begin{proof}
In order to understand when the surface $S$ is defined over $\Q$
one can consider the following four monomials in the variables $a,b,c,d$ from equation~\eqref{equation:caso222:B}, with their coefficients
\begin{align*}
 &a^2bc & -&(y-1)(z-1)(x-y)(x-z)\\
 &a^2bd & &(y-1)(z-1)(x-y)(xz-1)\\
 &ab^2c & &(y-1)(z-1)(x - z) (x y-1)\\
 &ab^2d & -&(y-1)(z-1)(xy -1)(xz-1).
\end{align*}
If the variety is defined over $\Q$, then the ratio of any two of those four coefficients, when they are not zero, must be a rational number.
Considering the first two, we see that either there is a rational relation between $1,x,z,xz$, or $x=y$; considering also the second two we obtain that either there is a rational relation between $1,x,z,xz$, or $x=y=-1$.
Similarly, pairing the first with the third and the second with the fourth, we obtain  that either there is a rational relation between $1,x,y,xy$, or $x=z=-1$.
If $x=y=-1$ the equation defining the surface becomes 
\[-4 a^2 b^2 (z-1)^2 - 4 c^2 d^2 (z-1)^2 - 32 a b c d z + 
 4 a b c^2 (1 + z)^2 + 4 a b d^2 (1 + z)^2,\]
 and for it to be defined over $\Q$, $z$ must have order 4 or 6 (and analogously if $x=z=-1$).
 If both a $\Q$-linear relation with non-zero coefficients between $1,x,z,xz$ and between $1,x,y,xy$ exist, then by Theorem~\ref{thm:unit.equation} $x,y,z$ are roots of unity of common order $4$ or $6$.
\end{proof}

\bibliography{DvoVenZan.bib}

\end{document}